\newtheorem{thm}{Theorem}[section]
\newtheorem{mainthm}{Theorem}
\newtheorem{mainprop}[mainthm]{Proposition}
\newtheorem{cor}[thm]{Corollary}
\newtheorem{lem}[thm]{Lemma}
\newtheorem{prop}[thm]{Proposition}
\theoremstyle{definition}
\newtheorem{dfn}[thm]{Definition}
\newtheorem{rem}[thm]{Remark}
\newtheorem{ques}[thm]{Question}
\newcommand{\into}{\hookrightarrow}
\newcommand{\onto}{\twoheadrightarrow}
\newcommand{\slcpt}{/ \!\! /}
\newcommand{\TopS}{\Sigma_{\mathrm{top}}}
\newcommand{\RR}{\mathbb{R}}
\newcommand{\NN}{\mathbb{N}}
\newcommand{\ZZ}{\mathbb{Z}}
\newcommand{\QQ}{\mathbb{Q}}
\newcommand{\Sph}{\mathbb{S}}
\newcommand{\C}{\mathcal{C}} 
\newcommand{\E}{\operatorname{E}} 
\newcommand{\SD}{\operatorname{SD}} 
\newcommand{\Sym}{\operatorname{Sym}}
\newcommand{\BS}{\operatorname{BS}} 
\newcommand{\Hom}{\operatorname{Hom}}
\newcommand{\Homtop}{\operatorname{Hom}_{\mathrm{top}}}
\renewcommand{\P}{\mathcal P} 
\newcommand{\B}{\mathcal B} 
\newcommand{\Ch}{\operatorname{C}} 
\newcommand{\h}{\operatorname{H}} 
\newcommand{\redH}{\tilde{\operatorname{H}}} 
\newcommand{\VR}{\operatorname{VR}} 
\newcommand{\ev}{\operatorname{ev}_\Lambda} 
\newcommand{\G}{\mathcal G} 
\renewcommand{\L}{\mathcal L}
\title{Geometric invariants of TDLC completions}
\author{Ilaria Castellano$^1$ \and José Pedro Quintanilha$^2$}
\date{$^1$Heinrich-Heine-Universität Düsseldorf\\
	$^2$Ruprecht-Karls-Universität Heidelberg\\\bigskip
	\today}
\begin{document}

\maketitle
\begin{abstract}
   Recently, Bonn and Sauer showed that, from the point of view of compactness properties, the Schlichting completion of a Hecke pair $(\Gamma,\Lambda)$ behaves precisely as if it were the quotient of $\Gamma$ by $\Lambda$. Motivated by this result, we prove that a similar phenomenon holds for the $\Sigma$-sets. More generally, we relate the $\Sigma$-sets of every TDLC~completion  of a Hecke pair $(\Gamma,\Lambda)$ to the $\Sigma$-sets of $\Gamma$ whenever $\Lambda$ satisfies suitable compactness properties. We provide applications to TDLC~completions of Baumslag--Solitar groups and certain groups of upper triangular matrices studied by Schesler.
\end{abstract}

\tableofcontents

\section{Introduction}
Recently, a theory of $\Sigma$-sets for locally compact Hausdorff
groups has been introduced and studied by Bux, Hartmann, and the second author \cite{BHQ24a,BHQ24b}. This novel theory, on the one hand, refines the compactness properties $\mathrm C_n$ and $\mathrm {CP}_n(R)$ of Abels and Tiemeyer (where $R$~is a commutative ring) \cite{AT97}, and
on the other hand, recovers the classical theory of $\Sigma$-sets\footnote{In the
literature, $\Sigma$-sets are also sometimes called ``geometric invariants'' or “BNSR-invariants”, due to Bieri,
Neumann, Strebel and Renz, who pioneered the theory \cite{Ren88,BR88}.} for discrete groups.

The principal area of application for the theory of compactness properties and, by extension, these newly defined $\Sigma$-sets are totally disconnected locally compact (TDLC) groups \cite{CCC20,CW25}. 
A notable source of examples are TDLC completions of Hecke pairs. A {Hecke pair} $(\Gamma, \Lambda)$ consists of a Hausdorff topological group~$\Gamma$ and an open commensurated subgroup $\Lambda\subseteq\Gamma$. The property of $\Lambda$~being commensurated is less restrictive than being normal, so one cannot define a quotient group~$\Gamma / \Lambda$. Still, there are TDLC groups~$G$ with continuous homomorphisms $\Gamma \to G$ with dense image for which $\Lambda$~is the preimage of a compact open subgroup $L$ of~$G$. 

As explained by Reid and Wesolek \cite{RW19}, the Schlichting completion $\Gamma \slcpt \Lambda$ (together with its completion map $\alpha \colon \Gamma \to \Gamma \slcpt \Lambda$) is, in a precise sense, the smallest such TDLC group (see Theorem~\ref{thm:compl}), and if $\Lambda$~happens to be normal, then one simply has $\Gamma \slcpt \Lambda = \Gamma / \Lambda$.
Recently, Bonn and Sauer studied the behaviour of the compactness properties under taking Schlichting completions, and showed that, from this perspective, $\Gamma \slcpt \Lambda$ behaves exactly as though it were a quotient of~$\Gamma$ by~$\Lambda$ \cite[Theorems 1.1~and~1.2]{BS24}.
This motivates investigating whether the same phenomenon holds for $\Sigma$-sets -- more precisely, whether a relationship between the $\Sigma$-sets of~$\Gamma$ and of~$\Gamma \slcpt \Lambda$ may be inferred from compactness properties of~$\Lambda$.

The $\Sigma$-sets of a locally compact Hausdorff group~$G$ are certain subsets of the space $\Homtop(G,\RR)$ of continuous homomorphisms $G\to \RR$ (usually called ``characters''). Thus, in comparing $\Sigma$-sets of $\Gamma$ and~$\Gamma \slcpt \Lambda$, one might first like to relate their character spaces. We begin by showing that there is in fact a simple connection between the character spaces of~$\Gamma$ and of any TDLC completion of $(\Gamma, \Lambda)$:

\begin{mainprop}[Character spaces of TDLC completions]\label{prop:char_space_intro}
Let $\phi\colon \Gamma \to G$ be a TDLC completion of a Hecke Pair $(\Gamma, \Lambda)$. Then precomposition with $\phi$ induces an isomorphism of $\RR$-vector spaces
   $$\phi^*\colon \Homtop(G, \RR) \xrightarrow{\cong} \Homtop(\Gamma, \RR)_\Lambda,$$
where $\Homtop(\Gamma, \RR)_\Lambda$ denotes the space of characters $\Gamma \to \RR$ that vanish on~$\Lambda$.
\end{mainprop}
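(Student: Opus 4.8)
The plan is to verify directly that $\phi^*$ is well defined, $\RR$-linear, injective and surjective; linearity is immediate from the pointwise vector-space structures on the character spaces, so the content lies in the other three properties. First I would record the standing facts about a TDLC completion $\phi \colon \Gamma \to G$: the subgroup $L \subseteq G$ is compact open, $\Lambda = \phi^{-1}(L)$, and $\phi$ has dense image. For well-definedness, observe that any $\chi \in \Homtop(G,\RR)$ maps the compact subgroup $L$ onto a compact --- hence trivial --- subgroup of $\RR$; thus $\chi \circ \phi$ vanishes on $\phi^{-1}(L) = \Lambda$, so $\phi^*\chi \in \Homtop(\Gamma,\RR)_\Lambda$. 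Injectivity is equally short: if $\chi \circ \phi = 0$ then $\chi$ vanishes on the dense subgroup $\phi(\Gamma)$, and since $\RR$ is Hausdorff and $\chi$ continuous, $\chi = 0$.

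For surjectivity, fix $\psi \in \Homtop(\Gamma,\RR)_\Lambda$; I must produce $\chi \in \Homtop(G,\RR)$ with $\chi \circ \phi = \psi$. The key geometric observation is that $\phi(\Gamma)\, L = G$: indeed $\phi(\Gamma)\, L$ is open (as $L$ is open) and meets every coset $gL$ (as $\phi(\Gamma)$ is dense), hence equals $G$. This suggests defining $\chi(\phi(\gamma)\ell) := \psi(\gamma)$ for $\gamma \in \Gamma$ and $\ell \in L$. Well-definedness is exactly where the hypotheses $\Lambda = \phi^{-1}(L)$ and $\psi|_\Lambda = 0$ enter: if $\phi(\gamma)\ell = \phi(\gamma')\ell'$, then $\phi(\gamma'^{-1}\gamma) = \ell'\ell^{-1} \in L$, so $\gamma'^{-1}\gamma \in \Lambda$, and therefore $\psi(\gamma) - \psi(\gamma') = \psi(\gamma'^{-1}\gamma) = 0$.

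It then remains to check three points. That $\chi \circ \phi = \psi$ is immediate by taking $\ell = e$. That $\chi$ is continuous follows because $\chi$ is by construction right-$L$-invariant, i.e. $\chi(g\ell) = \chi(g)$ for all $\ell \in L$ (write $g = \phi(\gamma)\ell_0$ and note $\ell_0\ell \in L$), so $\chi$ is locally constant, being constant on the open neighbourhood $gL$ of each point $g$. Finally, $\chi$ is a homomorphism: its restriction to $\phi(\Gamma)$ satisfies $\chi\big(\phi(\gamma_1)\phi(\gamma_2)\big) = \psi(\gamma_1\gamma_2) = \psi(\gamma_1) + \psi(\gamma_2)$, so the continuous map $G \times G \to \RR$, $(g,h) \mapsto \chi(gh) - \chi(g) - \chi(h)$, vanishes on the dense subset $\phi(\Gamma) \times \phi(\Gamma)$ and hence, $\RR$ being Hausdorff, vanishes identically. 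I expect the only delicate point to be the well-definedness of $\chi$; everything else is a routine density-and-Hausdorffness argument, and once the decomposition $G = \phi(\Gamma)\, L$ is in hand the remaining verifications are mechanical.
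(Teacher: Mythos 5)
Your proof is correct, but it takes a genuinely different route from the paper. The paper deduces the result (as Corollary~\ref{cor:char_space}) from the more general Proposition~\ref{prop:maps_out_of_completion}, whose surjectivity is reduced, via the universal property of TDLC completions (Theorem~\ref{thm:compl}), to the special case of the Schlichting completion; that case is then handled in Lemma~\ref{lem:mapfromslcpt} using the concrete realization of $\Gamma\slcpt\Lambda$ inside $\Sym(\Gamma/\Lambda)$, which attaches to each $\sigma\in\G$ the preferred coset $\sigma(\Lambda)$. You instead construct the extension directly on an arbitrary completion $G$, using the decomposition $G=\phi(\Gamma)\,L$ (available because $L$ is open and $\phi(\Gamma)$ is dense). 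The essential computations are the same in both arguments --- well-definedness boils down to $\phi^{-1}(L)=\Lambda$, continuity to local constancy on $L$-cosets, and the homomorphism property to a density-and-Hausdorffness argument --- but yours avoids both the universal property and the Schlichting model, making it shorter and self-contained, at the cost of no longer emphasizing the distinguished role of the Schlichting completion. It is also worth noting that your construction works verbatim with an arbitrary Hausdorff topological group $H$ in place of $\RR$, provided one restricts the target of $\phi^*$ to $\Homtop(G,H)_L$ (the vanishing on $L$ then being part of the definition of $\chi$ rather than automatic), since every step uses only that $H$ is a Hausdorff topological group; in this way your argument directly recovers Proposition~\ref{prop:maps_out_of_completion} as well.
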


This statement is presented in the text as Corollary~\ref{cor:char_space}. It contextualizes our main result, where we show that indeed Bonn and Sauer's result extends to the $\Sigma$-theory of all TDLC completions of the Hecke pair~$(\Gamma,\Lambda)$:

\begin{mainthm}[$\Sigma$-sets of TDLC completions]\label{thm:main_intro}
Fix $n\in \NN$ and a commutative ring~$R$.
  Let $\phi\colon \Gamma \to G$~ be a TDLC completion of a Hecke pair~$(\Gamma, \Lambda)$, with $\Gamma$ a locally compact Hausdorff group,
  let $\bar\chi \colon G \to \RR$ be a character and $\chi:=\bar\chi \circ \phi$.

    If $\Lambda$ is of type~$\mathrm{C}_n$, then:
    \begin{enumerate}
        \item for every $k\le n$, if $\bar\chi \in \TopS^k(G)$, then $\chi \in \TopS^k(\Gamma)$,
        \item for every $k\le n+1$, if $\chi \in \TopS^k(\Gamma)$, then $\bar\chi \in \TopS^k(G)$.
    \end{enumerate}

    Similarly, if $\Lambda$~is of type~$\mathrm{CP}_n(R)$, then:
     \begin{enumerate}
        \item for every $k\le n$, if $\bar\chi \in \TopS^k(G;R)$, then $\chi \in \TopS^k(\Gamma;R)$,
        \item for every $k\le n+1$, if $\chi \in \TopS^k(\Gamma;R)$, then $\bar\chi \in \TopS^k(G;R)$.
    \end{enumerate}
\end{mainthm}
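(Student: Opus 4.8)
The plan is to follow the template of Bonn and Sauer \cite{BS24}, upgraded to keep track of a character, and I would first reduce to the case $\bar\chi\neq 0$: for $\bar\chi=\chi=0$ both assertions say only that $\Gamma$ is of type $\mathrm{C}_k$ iff $G$ is, which in the stated ranges is the compactness‑property result of \cite{BS24} for the Schlichting completion and extends to a general TDLC completion since the latter differs from $\Gamma\slcpt\Lambda$ by a compact normal subgroup; I shall use this freely. Two preliminary facts underpin everything. First, model‑independence of $\TopS^k$: for a locally compact group of type $\mathrm{C}_k$, membership of a character $\psi$ is detected on \emph{any} $(k-1)$-connected (resp.\ $(k-1)$-acyclic over $R$) CW‑complex on which the group acts with compact open cell stabilizers and cocompactly in dimensions $\le k$, equipped with a $\psi$-equivariant height function, by asking that the associated superlevel filtration be essentially $(k-1)$-connected (resp.\ $(k-1)$-acyclic) --- this is part of \cite{BHQ24a,BHQ24b}. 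Second, from the argument behind Proposition~\ref{prop:char_space_intro}: for every compact open $K\le G$ one has $\phi(\Gamma)K=G$ (a union of left cosets of an open subgroup is clopen, and this one is dense), hence $G/K\cong\Gamma/\phi^{-1}(K)$ as $\Gamma$-sets; since all compact open subgroups of $G$ are mutually commensurable, $\phi^{-1}(K)$ lies in the commensurability class of $\Lambda=\phi^{-1}(L)$; and $\bar\chi(L)$, a compact subgroup of $\RR$, is trivial, so $\chi$ vanishes on $\Lambda$ and every height function on a $G$-complex pulls back along $\phi$ to a $\chi$-equivariant one.

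For part (1) I would fix $k\le n$ and a witness $G$-complex $Y$ for $\bar\chi\in\TopS^k(G)$. Since $\Lambda$ is of type $\mathrm{C}_n$, so is every subgroup $\Lambda'$ in its commensurability class, so each acts on an $(n-1)$-connected complex $Z_{\Lambda'}$ with compact open stabilizers, cocompactly in dimensions $\le n$. Viewing $Y$ as a $\Gamma$-complex via $\phi$, with cell stabilizers now in the commensurability class of $\Lambda$, I would resolve it orbit by orbit: replace a $\Gamma$-orbit of $i$-cells with stabilizer $\Lambda'$ by the induced block $\Gamma\times_{\Lambda'}(Z_{\Lambda'}\times D^i)$, glued along compatible resolutions of the attaching maps --- the standard construction replacing each cell orbit by an induced subgroup model, a cellular analogue of the extension argument for finiteness properties. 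This produces a $\Gamma$-complex $W$ with a cellular map $p\colon W\to\phi^*Y$ whose point‑preimages are $(n-1)$-connected, with compact open $\Gamma$-cell‑stabilizers, $\Gamma$-cocompact in dimensions $\le\min(k,n)=k$, and (via a Leray/Mayer--Vietoris spectral sequence) $(k-1)$-connected since $k\le n$. Pulling the height of $Y$ back along $p$ (legitimate because $\chi|_\Lambda=0$), the set $W_{\ge t}=p^{-1}(Y_{\ge t})$ is the corresponding resolution of $Y_{\ge t}$, so essential $(k-1)$-connectivity of $\{Y_{\ge t}\}$ passes to $\{W_{\ge t}\}$ because the preimages are $(n-1)$-connected with $n\ge k$; by model‑independence, $\chi\in\TopS^k(\Gamma)$.

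For part (2) I would fix $k\le n+1$ and $\chi\in\TopS^k(\Gamma)$, so $\Gamma$ is of type $\mathrm{C}_k$; by the compactness‑property input above (which gives $\Gamma$ of type $\mathrm{C}_k\Rightarrow G$ of type $\mathrm{C}_k$ for $k\le n+1$ when $\Lambda$ is of type $\mathrm{C}_n$), $G$ is of type $\mathrm{C}_k$, so there is a $G$-complex $Y$ realizing this --- the complex on which the superlevel condition for $\bar\chi$ must be checked. I would form $p\colon W\to\phi^*Y$ exactly as in part (1): now $W$ is $\Gamma$-cocompact in dimensions $\le\min(k,n)$ and $\min(k-1,n-1)$-connected, and $p$ is an isomorphism on $\pi_j$ for $j\le n-1$ and surjective on $\pi_n$. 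Comparing $W$ with a genuine $\Gamma$-witness $W'$ for $\chi\in\TopS^k(\Gamma)$ through dimension $k-1$ (building $\Gamma$-equivariant, height‑respecting maps $W^{(k-1)}\to W'$ and $(W')^{(k-1)}\to W$ from the respective connectivities), essential $(k-1)$-connectivity of $\{W'_{\ge t}\}$ transports to $\{W_{\ge t}\}$; then a lift‑then‑push argument along $p$ --- using surjectivity on $\pi_{k-1}$ when $k=n+1$, an isomorphism otherwise --- shows $\{Y_{\ge t}\}$ is essentially $(k-1)$-connected in the $G$-cocompact sense, i.e.\ $\bar\chi\in\TopS^k(G)$.

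The $\TopS^k(\cdot\,;R)$ statements would run through the identical scheme with cellular $R$-chain complexes in place of complexes, the above construction replaced by the total complex of the double complex assembled from $\Ch_*(\phi^*Y;R)$ and finitely generated projective $R\Lambda'$-resolutions of $R$, ``connected'' replaced by ``acyclic over $R$'', and the homotopy spectral sequences by first‑quadrant homology spectral sequences; it is in this homological language that the asymmetric range in part (2) is cleanest, the extra degree arising exactly as in the classical partial‑resolution argument, where a complex of finitely generated projectives in degrees $\le k$ with finitely generated homology in degrees $\le k-1$ already certifies type $\mathrm{FP}_k$. I expect the real obstacle to be precisely this ``$+1$'' in part (2): one must isolate the correct partial‑model refinement of the criteria of \cite{BHQ24a,BHQ24b} allowing a $(k-2)$-connected complex that is cocompact only in dimensions $\le k-1$ (all that the resolution $W$ delivers when $k=n+1$) to certify $\TopS^k$ --- together with the customary separate treatment of $k=1,2$, where compact generation and compact presentation must be argued by hand rather than read off homologically.
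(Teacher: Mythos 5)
Your proposal is a genuinely different route from the paper, closer in spirit to Bonn--Sauer's original argument than to the proof here. The paper never invokes a Brown-style criterion at all: it works entirely at the level of the defining filtrations of free simplicial sets $\E\Gamma$, $\E(\Gamma/\Lambda)$ and $\E\G$, shows in Section~\ref{sec:part1} (via the "almost commutativity with compacts" property of $\Lambda$, Definition~\ref{dfn:almost_comm}) that membership of $\chi$ in $\TopS^k(\Gamma)$ can be read off a filtration of $\E(\Gamma/\Lambda)$ (Propositions \ref{prop:sigmaofquotient} and \ref{prop:sigmaofquotient_homological}), and in Section~\ref{sec:part2} matches that filtration with the one defining $\TopS^k(\G)$ via two explicit simplicial homotopy equivalences (Proposition~\ref{prop:quotient_to_slcpt} and Lemma~\ref{lem:complete_orbits}). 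The asymmetric ``$+1$'' range in item (2) falls out of the inductive lifting lemmas (Lemmas~\ref{lem:lift} and~\ref{lem:homological_lift}), which build a lift one skeleton at a time and deliver the stated boundary of validity directly, rather than requiring a separate partial-model argument.

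Against that backdrop, there are two genuine gaps in what you wrote. First, the "model-independence" statement you use as a black box --- that $\TopS^k$ can be detected on \emph{any} $(k-1)$-connected $G$-CW-complex with compact open cell stabilizers and cocompact $k$-skeleton carrying a $\chi$-equivariant height --- is not something the cited papers assert in this generality; \cite{BHQ24a,BHQ24b} give the criterion via $\E G$ and via $\E X$ for a proper $G$-space $X$ (Proposition~\ref{prop:proper_actions} here), i.e.\ always through free simplicial sets, not arbitrary cocompact CW-models with non-trivial cell stabilizers. Establishing such a Brown-style criterion for $\Sigma$-sets of locally compact groups would itself be a substantial theorem and is exactly the kind of overhead the paper's approach is designed to avoid. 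Second, you name the $+1$ shift in part (2) as "the real obstacle" and stop short of supplying the mechanism; in either approach this is where the work actually lies, and a proof needs to resolve it rather than record it. The orbit-by-orbit resolution is also only sketched --- in the locally compact setting, producing compatible models $Z_{\Lambda'}$ for infinitely many stabilizers and gluing them along resolved attaching maps requires care. In short, the strategy is plausible and, if the Brown criterion and the partial-model refinement were both supplied, might work; but as written it relies on two unproved and nontrivial ingredients, while the paper's proof proceeds by a self-contained adaptation of the filtration arguments of \cite{BHQ24a,BHQ24b} that sidesteps both.
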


Theorem~\ref{thm:main_intro} appears in the text as Theorem~\ref{thm:main}. In light of Proposition~\ref{prop:char_space_intro}, we see that it covers all characters of~$G$. In particular, by specializing to the character $\chi=0$ and the completion $G=\Gamma\slcpt\Lambda$, it recovers Bonn--Sauer's theorem, providing an alternative proof of their result.

Broadly speaking, the technical core of the article is divided into two parts. First, we connect the membership status of the character~$\chi\colon \Gamma\to \RR$ to the essential connectedness / acyclicity properties of a certain filtration associated to the space of left cosets~$\Gamma /\Lambda$. The argument here is essentially adapted from analogous results relating the $\Sigma$-theory of locally compact Hausdorff groups in a short exact sequence \cite[Theorem~8.2]{BHQ24a} \cite[Theorem~J]{BHQ24b}. However, one new idea is needed in order to deal with the steps that relied on the kernel group being normal (whose role is now played by~$\Lambda$).
The key observation is that $\Lambda$~enjoys a weaker property, which we have termed ``almost commutativity with compacts'' (Definition~\ref{dfn:almost_comm}), which still enables the argument to go through.
The resulting Propositions \ref{prop:sigmaofquotient}~and~\ref{prop:sigmaofquotient_homological}, generalize those earlier theorems by relaxing the assumption that the kernel group is normal, to closed subgroups that almost commute with compacts.

The second part of the proof relates the aforementioned filtration associated to~$\Gamma / \Lambda$, to the filtration defining membership of $\bar\chi$ in the $\Sigma$-sets of~$G$, by means of two simplicial homotopy equivalences. We first do this in the case $G=\Gamma \slcpt \Lambda$, and then a quick argument extends the result to all TDLC completions of $(\Gamma, \Lambda)$.

After concluding the proof, we provide two applications of Proposition~\ref{prop:char_space_intro} and Theorem~\ref{thm:main_intro}. First, we give a complete description of the $\Sigma$-sets of the Schlichting completions~$G_{m,n}$ of the Baumslag--Solitar groups $\BS(m,n)$ (these completions were earlier studied by Elder and Willis \cite{EW18}). Second, we give an explicit description of the Schlichting completions of certain matrix groups studied by Schesler \cite{S23}, and extend his partial results on their $\Sigma$-theory to these completions.

\subsection*{Structure of the article}

Section~\ref{s:preliminaries} recalls the basic notions and tools used throughout the paper, including some results on simplicial sets (which lie at the core of the $\Sigma$-theory of locally compact groups), the definitions and basic properties of $\Sigma$-sets for locally compact Hausdorff groups, and an overview of TDLC completions of Hecke pairs, with emphasis on the Schlichting completion.

The key property of ``almost commutativity with compacts'' is introduced in Section~\ref{sec:almost_comm}, and we show that it is satisfied by open commensurated subgroups (Proposition~\ref{prop:comm_exchange}). We also ask whether it holds, more generally, for closed commensurated subgroups and provide an affirmative answer, due to George Willis, in the case where the ambient group is TDLC (Proposition~\ref{prop:Willis}).

Character spaces of TDLC completions are studied in Section~\ref{sec:char_space}, where we prove Proposition~\ref{prop:char_space_intro} (as Corollary~\ref{cor:char_space}) and also show that $\Sigma$-sets are essentially the same across all TDLC completions of a Hecke pair (Proposition~\ref{prop:completions_have_same_Sigma}). 

The proof of our main result, Theorem~\ref{thm:main_intro}, is the subject of Section~\ref{sec:main_proof}, which is divided into two parts as outlined earlier. Section~\ref{sec:part1} connects the $\Sigma$-sets of $\Gamma$ and the homotopy type of a filtration associated to the coset space $\Gamma /\Lambda$, for $\Lambda$~a closed subgroup that almost commutes with compacts. This is done separately in the homotopical and homological setup, culminating in Propositions~\ref{prop:sigmaofquotient}
and~\ref{prop:sigmaofquotient_homological}, respectively. Then, in Section~\ref{sec:part2}, we provide homotopy equivalences from this filtration to the one defining membership of the relevant character in the $\Sigma$-sets of the Schlichting completion. Once this is done, we prove our main result, as Theorem~\ref{thm:main}.

In Section~\ref{sec:applications} we give two applications of our results. In Section~\ref{sec:BS} we compute all $\Sigma$-sets of the Schlichting completions of Baumslag--Solitar groups (Proposition~\ref{prop:BS_main}), and in Section~\ref{sec:Schesler} we extend Schesler's partial computations of $\Sigma$-sets of certain matrix groups to their Schlichting completions (Proposition~\ref{prop:Schesler_main}).

In this article, we opted to use a definition of the homological $\Sigma$-sets that closely parallels the one of homotopical $\Sigma$-sets, making the similarities between the two theories more transparent, and many arguments more uniform. However, this choice deviates somewhat from the original definition \cite{BHQ24b}, so we explain the translation between the two descriptions in Appendix~\ref{sec:appendix}.

\subsection*{Acknowledgements}
We thank Colin Reid, Roman Sauer and George Willis for kindly answering our questions during the preparation of this work. George Willis also supplied the proof of Proposition~\ref{prop:Willis}.

\section{Preliminaries}\label{s:preliminaries}

\subsection{Simplicial sets}\label{sec:SSets}

Simplicial sets play a central role in the definition of $\Sigma$-sets for locally compact groups. The basics of the theory of simplicial sets will be assumed, which the reader may consult in Friedman's introductory survey \cite{Fri12}. For a more extensive treatment see for example Curtis's article \cite{Cur71}. Our ``simplicial toolkit'' will also need part of the material collected in an earlier article \cite[Section~2]{BHQ24a}, whose main points we now summarize.

\subsubsection{Free simplicial sets}

The \textbf{free simplicial set}~$\E X$ on a set~$X$ has as $k$-simplices all the $(k+1)$-tuples $(x_0,\ldots, x_k) \in X^{k+1}$. The $i$-th face and degeneracy maps are given by, respectively, suppressing or doubling the $i$-th entry:
    \begin{align*}
    d_i(x_0, \ldots, x_k)&=(x_0, \ldots, x_{i-1}, x_{i+1}, \ldots, x_k)\\
    s_i(x_0, \ldots, x_k)&=(x_0, \ldots, x_{i-1}, x_{i}, x_{i}, x_{i+1},\ldots, x_k).
    \end{align*}

In particular, the $0$-skeleton is given by $\E X^{(0)} = X$, and it is clear that every action of a group~$G$ on~$X$ extends to an action on~$\E X$. Of particular importance here is the case where $X=G$. Then, the free simplicial set~$\E G$ carries a $G$-action, whose induced action on the geometric realization~$|\E G|$ is free. Note however that in case $G$~has a topology, this is not an action of~$G$ as a topological group, but only of the underlying abstract group.

\subsubsection{Homotopy groups}

We will regard simplicial sets as topological objects by means of the geo\-metric realization functor~$Y \mapsto |Y|$, and we will loosely talk about topological properties of a pointed simplicial set~$(Y,y)$ when in rigor we mean $(|Y|,y)$. Accordingly, we also refer to the \textbf{homotopy groups}~$\pi_k(Y, y):= \pi_k(|Y|, y)$, which are typically defined only if $Y$~is a Kan complex, even when this is not the case. 

Despite this topological notion of homotopy groups, one can return to the combinatorial world of simplicial sets by means of the semisimplicial approximation theorem, a tool analogous to simplicial approximation in the theory of simplicial complexes.
One of the main ingredients is a certain ``double barycentric subdivision functor''~$\SD\colon \mathbf{SSet}\to \mathbf{SSet}$, which has the property that for every simplicial set~$Y$, there exists a natural map $\Phi \colon \SD(Y) \to Y$ whose realization~$|\Phi|$~is homotopic to a homeomorphism $|\SD(Y)|\cong|Y|$.
Fixing once and for all, for each $k\in \NN$, a homeomorphism $\Sph^k \cong |\partial\Delta^{k+1}|$ that identifies the preferred basepoint of~$\Sph^k$ with a base-vertex of $\partial \Delta^{k+1}$, we then have \cite[Lemmas 2.4 and~2.5]{BHQ24a}:

\begin{lem}[Combinatorial $\pi_k$]\label{lem:comb_pi_k} Let $(Y,y)$ be a pointed simplicial set and let $k\in \NN$.
\begin{enumerate}
    \item For every pointed map $\eta_\circ \colon |\partial \Delta^{k+1}| \to |Y|$, there are $m\in \NN$ and a map of simplicial sets $\eta \colon \SD^m(\partial \Delta^{k+1}) \to Y$ such that $|\eta|$~is pointed-homotopic to $\eta_\circ\circ |\Phi^m|$.
    \item Given $m\in \NN$ and a pointed simplicial map $\eta \colon \SD^m(\partial\Delta^{k+1})\to Y$, the realization $|\eta|$ represents the trivial element of $\pi_k(Y,y)$ if and only if there are $m'\ge m$ and $\mu\colon \SD^{m'}(\Delta^{k+1})\to Y$ extending $\eta \circ \Phi^{m'-m}$.
\end{enumerate}
\end{lem}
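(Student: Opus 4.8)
Both parts are forms of the semisimplicial approximation theorem; the only place where the generality of simplicial sets (rather than simplicial complexes) intervenes is dealt with by the functor~$\SD$, which, being a \emph{double} barycentric subdivision, has the feature that $\SD(Y)$ is an (ordered) simplicial complex for every simplicial set~$Y$. Hence classical simplicial approximation is available for continuous maps into $|\SD(Y)|$, and the natural map $\Phi\colon\SD(Y)\to Y$ lets one transport the conclusions back to~$Y$. Throughout I would keep track of base-vertices — $\Delta^{k+1}$ and $\partial\Delta^{k+1}$ carry preferred base-vertices, and by naturality so do all their iterated subdivisions — and arrange every map and homotopy to be pointed. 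I would also use that $|\partial\Delta^{k+1}|$ and $|\Delta^{k+1}|$ are compact and $|Y|$ carries the weak topology of its skeleta, so that any map out of them factors through a finite subcomplex and we may assume $Y$ finite.

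For Part~1, fix a pointed homotopy inverse $\psi\simeq|\Phi|^{-1}\colon|Y|\to|\SD(Y)|$ and consider $\psi\circ\eta_\circ\colon|\partial\Delta^{k+1}|\to|\SD(Y)|$. Pull back the open cover of $|\SD(Y)|$ by open vertex stars along this map; because $\SD(Y)$ is a simplicial complex, a finite subfamily of these stars meets precisely when the corresponding vertices span a simplex, which is the combinatorial input the approximation argument needs. Choosing a Lebesgue number for the pulled-back cover, after $m$ barycentric subdivisions of the source every simplex of $\SD^m(\partial\Delta^{k+1})$ is carried by $\psi\circ\eta_\circ\circ|\Phi^m|$ into a single open star, where $|\Phi^m|\colon|\SD^m(\partial\Delta^{k+1})|\to|\partial\Delta^{k+1}|$ is the (homotopy-)homeomorphism. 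Picking for each vertex $v$ a vertex $g(v)$ of $\SD(Y)$ whose star contains the image of~$v$, with the base-vertex sent to the vertex over~$y$, defines a simplicial map $\bar\eta\colon\SD^m(\partial\Delta^{k+1})\to\SD(Y)$ with $|\bar\eta|$ pointed-homotopic to $\psi\circ\eta_\circ\circ|\Phi^m|$ via the linear homotopy inside the simplices of~$|\SD(Y)|$. Then $\eta:=\Phi\circ\bar\eta$ satisfies $|\eta|\simeq|\Phi|\circ\psi\circ\eta_\circ\circ|\Phi^m|\simeq\eta_\circ\circ|\Phi^m|$ rel basepoint, as required.

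For Part~2, the ``if'' direction is immediate: if $\mu\colon\SD^{m'}(\Delta^{k+1})\to Y$ extends $\eta\circ\Phi^{m'-m}$, then $|\mu|$ is defined on $|\SD^{m'}(\Delta^{k+1})|\cong|\Delta^{k+1}|$, a contractible space, and restricts on the boundary to $|\eta|\circ|\Phi^{m'-m}|$; hence that composite is pointed-null-homotopic, and since $|\Phi^{m'-m}|$ is a pointed homotopy equivalence and the fixed homeomorphism $\Sph^k\cong|\partial\Delta^{k+1}|\cong|\SD^m(\partial\Delta^{k+1})|$ identifies $|\eta|$ with a representative of its class in $\pi_k(Y,y)$, the element $[|\eta|]$ is trivial. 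For the ``only if'' direction, a pointed null-homotopy of $|\eta|$ provides, after transporting along $|\Phi^m|$ and possibly a preliminary homotopy and an initial subdivision, a pointed continuous map $F\colon|\Delta^{k+1}|\to|Y|$ that on $|\partial\Delta^{k+1}|$ agrees with a suitably subdivided copy of~$\eta$. One then applies the construction of Part~1 to~$F$ in a \emph{relative} form: subdividing $\Delta^{k+1}$ a total of $m'$ times and running the star-choosing procedure so that on $\SD^{m'}(\partial\Delta^{k+1})$ it is forced to reproduce $\eta\circ\Phi^{m'-m}$ — which it can be, since on each boundary simplex the image already lies in the prescribed star — and extending the choice consistently over the interior yields $\mu\colon\SD^{m'}(\Delta^{k+1})\to Y$ extending $\eta\circ\Phi^{m'-m}$.

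I expect the genuine obstacle in the whole lemma to be this last point: carrying out the semisimplicial approximation \emph{rel} the (subdivided) boundary so that it restricts there to the prescribed simplicial map \emph{exactly}, rather than merely up to homotopy — that is, establishing the relative semisimplicial approximation theorem while keeping the interaction with $\SD$, the natural maps $\Phi$, and the base-vertex under simultaneous control.
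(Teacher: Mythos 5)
The paper does not prove this lemma; it is imported verbatim from the earlier article \cite{BHQ24a} (Lemmas~2.4 and~2.5 there), so there is no in-paper argument to compare your sketch against. Your outline is nonetheless a fair reconstruction of the intended argument: a semisimplicial approximation theorem built on the double-subdivision functor~$\SD$, with the natural maps~$\Phi$ and the fixed basepoint carried along.

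Two caveats worth recording. First, you take as given that $\SD(Y)$ is an ordered simplicial complex for every simplicial set~$Y$; this is roughly the right idea, but it is not automatic (a single barycentric subdivision of an arbitrary simplicial set need not be a simplicial complex, and that two iterations always suffice depends on the precise construction of~$\SD$ used in \cite{BHQ24a}). Second --- and you flag this yourself --- the ``only if'' direction of Part~2 hinges on a \emph{relative} approximation in which the boundary subcomplex is subdivided together with the interior and the resulting simplicial map must agree \emph{exactly} with the prescribed $\eta\circ\Phi^{m'-m}$ there. This is not the classical Zeeman relative simplicial approximation theorem (which keeps the subcomplex unsubdivided), and your remark that the star-choosing procedure ``can be forced to reproduce $\eta\circ\Phi^{m'-m}$ on the boundary'' is an assertion, not a proof. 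The cited lemmas in \cite{BHQ24a} are precisely where this bookkeeping --- vertex stars, the maps~$\Phi$, and the base-vertex --- is carried out in full; as written, your proposal correctly identifies the route and the obstacle but delegates the genuine content of the lemma to the reference rather than supplying it.
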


In short, Lemma~\ref{lem:comb_pi_k} says that one can represent every element of $\pi_k(Y,y)$ by a simplicial map out of some subdivision of the $k$-sphere, and also certify its triviality using maps out of a subdivision of the $(k+1)$-disk.

\subsubsection{Homology $R$-modules}

Given a simplicial set~$Y$ and a commutative ring~$R$ (with $1$), the reduced homology $R$-modules $\redH_*(Y;R)$ are computed from its \textbf{augmented chain complex}~$\Ch(Y;R)$. For each $k\in \NN$, the $R$-module~$\Ch_k(Y;R)$ of $k$-chains is the free $R$-module with basis the $k$-simplices of~$Y$. There is an additional nonzero entry in degree~$-1$, with $\Ch_{-1}(Y;R):= R$. For $k\ge 1$, the boundary maps $\partial_k \colon \Ch_k(Y;R) \to \Ch_{k-1}(Y;R)$ are given on basis elements by the usual alternating sum
$$\partial_k(\sigma) = \sum_{i=0}^k (-1)^i d_i(\sigma)$$
(we will usually drop the subscript ``$k$'' from the notation~$\partial_k$). The last map, usually denoted by $\varepsilon \colon \Ch_0(Y;R) \to R$, is called the \textbf{augmentation map} and sends each vertex to~$1$. 
 We will write~$\Ch(Y;R)^{(n)}$ to denote the $n$-skeleton of this chain complex, that is, the chain complex obtained by replacing with~$0$ all $R$-modules $\Ch_k(Y;R)$ with $k\ge n+1$. 
A map of augmented $R$-chain complexes is said to ``extend $\operatorname{id} \colon R\to R$'' if the map in dimension $-1$ is the identity.

It is a standard fact that the reduced (singular) homology $R$-modules $\redH_k(|Y|;R)$ of the geometric realization are naturally isomorphic to $\redH_k(Y;R)$, which again justifies us being loose in talking about the homology modules of~$Y$ versus~$|Y|$.

\subsubsection{Simplicial homotopies}

Two maps $f,g\colon X \to Y$ of simplicial sets might have homotopic geometric realizations $|f|,|g|$ despite not being homotopic as simplicial maps. We will however encounter situations where such convenient ``simplicial'' homotopies are available, so we now briefly discuss them.

The \textbf{standard $1$-simplex}~$\Delta^1$ is the simplicial set whose $k$-simplices are the $(k+1)$-tuples of the form $(0,\dots, 0,1, \dots, 1)$, and whose $i$-th face (resp. degeneracy) map is obtained by deleting (resp. doubling) the $i$-th entry. For each $i\in \{0,\dots,k+1\}$, we denote by~$\tau_i^k$ the $k$-simplex of~$\Delta^1$ with $i$~zeros.
	
\begin{dfn}
	A \textbf{simplicial homotopy} between simplicial maps $f, g \colon  X \to Y$ is a simplicial map $H\colon X \times \Delta^1 \to Y$ such that for every $k$-simplex $\sigma$ of~$X$, we have $H(\sigma, \tau_{k+1}^{k}) = f(\sigma)$ and $H(\sigma, \tau_{0}^{k}) = g(\sigma)$.
\end{dfn}

If $Y$~is a simplicial subset of a free simplicial set, then a simplicial homotopy~$H$ is necessarily given on each $k$-simplex $\sigma$ of~$X$ with vertices $x_0, \ldots, x_k$ by 
\[(\sigma, \tau_i^k) \mapsto (f(x_0), \dots, f(x_{i-1}), g(x_{i}), \dots, g(x_k)).\]
In particular, $H$~is determined by $f$~and~$g$.
Moreover,  such an assignment defines a homotopy (with codomain~$Y$) precisely if $k$-simplex as on the right hand side lies in~$Y$.

The geometric realization~$|H|$ is of course a homotopy from~$|f|$ to~$|g|$, so it induces isomorphisms on homology and homotopy groups in all dimensions.

\subsubsection{Essential connectedness and essential acyclicity}

A directed system of pointed sets $(S_\alpha)_{\alpha \in A}$ is \textbf{essentially trivial} if for each $\alpha\in A$ there is $\beta \ge \alpha$ such that the map $S_\alpha \to S_\beta$ is trivial. The pointed sets to which we intend to apply this notion will be homotopy groups and reduced homology $R$-modules.

Given a simplicial set~$Y$, a \textbf{filtration} of~$Y$ over a directed poset~$A$ is a family of simplicial subsets $Y_\alpha\subseteq Y$ indexed by~$A$, such that $\bigcup_{\alpha \in A}Y_\alpha =Y$ and whenever $\alpha \le \beta$, we have $Y_\alpha \subseteq Y_{\beta}$. For our purposes, the poset~$A$ will most often be the poset of compact subspaces of a topological space.

If a vertex $y\in Y^{(0)}$ is contained in all stages of a filtration $(Y_\alpha)_{\alpha \in A}$, we consider, for each $k\in \NN$, the directed system of homotopy groups $(\pi_k(Y_\alpha, y))_{\alpha \in A}$, which comes with inclusion-induced maps $\pi_k(Y_\alpha, y) \to \pi_k(Y_\beta, y)$ whenever $\alpha\le\beta$.
When $Y$ is nonempty but it is not the case that some $y\in Y^{(0)}$ lies in all the $Y_\alpha$, we may consider the subposet $A_y := \{\alpha \in A \mid y \in Y_\alpha \}$, and the directed system of homotopy groups $(\pi_k(Y_\alpha,y))_{\alpha \in A_y}$, with the inclusion-induced maps. It turns out that if $Y$~is connected, then whether this directed system is essentially trivial does not depend on the basepoint~$y$ \cite[Lemma~2.8]{BHQ24a}. Hence, given $n\in \NN$, we may define the filtration $(Y_\alpha)_{\alpha\in A}$ of~$Y$ to be \textbf{essentially $n$-connected} if for every $k\le n$ and some (hence every) $y\in Y$, the directed system $(\pi_k(Y,y))_{\alpha \in A_y}$ is essentially trivial. For all filtrations we consider in  this article, the simplicial set~$Y$ is connected, so we will routinely suppress basepoints from the notation.

Similarly, for each commutative ring~$R$ and $k\in \NN$, we have a directed system of reduced homology $R$-modules $(\redH_k(Y_\alpha);R)_{\alpha \in A}$, with the inclusion-induced maps.
When discussing homology, there are no basepoint issues to worry about: we say that the filtration $(Y_\alpha)_{\alpha \in A}$ is \textbf{essentially $n$-acyclic over~$R$} if for every $k\le n$ the directed system $(\redH_k(Y_\alpha;R))_{\alpha \in A}$ is essentially trivial.

\subsubsection{Simplicial homotopy equivalences}

\begin{dfn}\label{dfn:hoequiv}
	Given filtrations $(X_\alpha)_{\alpha\in A}$, $(Y_\beta)_{\beta \in B}$ of simplicial sets~$X, Y$ respectively and a map of posets $\phi \colon A \to B$, we say that a family of simplicial maps $(f_\alpha \colon X_\alpha \to Y_{\phi(\alpha)})_{\alpha \in A}$ \textbf{eventually commutes up to simplicial homotopy} if for every $\alpha, \alpha' \in A$ with $\alpha \le \alpha'$, there is $\beta\ge \phi(\alpha')$ such that the following diagram commutes up to simplicial homotopy:
	$$\begin{tikzcd}
		X_\alpha  \ar[d,hook]\ar[r,"f_\alpha"] & Y_{\phi(\alpha)} \ar[r,hook] & Y_\beta \\ X_{\alpha'}  \ar[r,"f_{\alpha'}"] & Y_{\phi(\alpha')} \ar[ur, hook]
	\end{tikzcd}$$
	The word ``eventually'' may be dropped if it is possible to take $\beta = \phi(\alpha')$.
	
	This data $(\phi, (f_\alpha)_{\alpha\in A})$ is a  \textbf{simplicial homotopy equivalence} between the filtrations if there exist $\psi\colon B\to A$ and $(g_\beta\colon Y_\beta \to X_{\psi(\beta)})_{\beta \in B}$ eventually commuting up to simplicial homotopy, such that for every $\alpha \in A$ and $\beta \in B$, 
	there are $\alpha'\ge \psi(\phi(\alpha)), \beta'\ge \phi(\psi(\beta))$ making the following diagrams commute up to simplicial homotopy:	
	$$\begin{tikzcd}
	X_\alpha \ar[r,hook]\ar[d,"f_\alpha"'] & X_{\alpha'}  \\
	Y_{\phi(\alpha)}\ar[r, "g_{\phi(\alpha)}"] & X_{\psi(\phi(\alpha))} \ar[u, hook]
	\end{tikzcd} \qquad
	\begin{tikzcd}
	Y_\beta \ar[r,hook] \ar[d,"g_\beta"'] &Y_{\beta'} \\
	 X_{\psi(\beta)}\ar[r,"f_{\psi(\beta)}"]& Y_{\phi(\psi(\beta))} \ar[u, hook]
	\end{tikzcd}$$
	Then $(\psi, (g_\beta)_{\beta \in B})$ is also a simplicial homotopy equivalence, which we call a \textbf{simplicial homotopy inverse} to~$(\phi, (f_\alpha)_{\alpha \in A})$.
\end{dfn}

For us the relevance of this notion lies in the following fact: 

\begin{lem}[Homotopy equivalences and essential triviality]\label{lem:homotopyequiv}
	If two filtrations $(X_\alpha)_{\alpha\in A}$, $(Y_\beta)_{\beta \in B}$ of connected simplicial sets $X,Y$ are simplicially homotopy-equivalent, then for every commutative ring~$R$ and $n\in \NN$:
    \begin{enumerate}
        \item  $(X_\alpha)_{\alpha\in A}$ is essentially $n$-connected if and only if $(Y_\beta)_{\beta\in B}$~is, and
        \item  $(X_\alpha)_{\alpha\in A}$ is essentially $n$-acyclic over~$R$ if and only if $(Y_\beta)_{\beta\in B}$~is.
    \end{enumerate}
\end{lem}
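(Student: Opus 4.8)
The statement to prove is Lemma~\ref{lem:homotopyequiv}: a simplicial homotopy equivalence between filtrations transfers essential $n$-connectedness and essential $n$-acyclicity. Here is how I would approach it.

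\medskip

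\textit{Plan.} The essential point is that ``essential triviality'' of a directed system of homotopy groups or homology modules depends only on the directed system up to a cofinality-type equivalence, and a simplicial homotopy equivalence produces exactly such an equivalence between the two systems $(\pi_k(X_\alpha))_{\alpha\in A}$ and $(\pi_k(Y_\beta))_{\beta\in B}$ (and similarly for $\redH_k(-;R)$). Since the two setups (homotopical and homological) run in complete parallel, I would write the argument once, for a generic functor $F$ from pointed simplicial sets to pointed sets (or $R$-modules) that sends geometric realizations of simplicial homotopic maps to equal morphisms — both $\pi_k(-,y)$ and $\redH_k(-;R)$ have this property, the former after the basepoint-independence result \cite[Lemma~2.8]{BHQ24a} already invoked in the preliminaries — and then specialize. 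Because $X$ and $Y$ are connected, the basepoint bookkeeping from the definition of essential $n$-connectedness is harmless, and I would fix compatible basepoints $x\in X$, $y\in Y$ from the start (choosing them in low enough filtration stages, or just passing to the cofinal subposets $A_x$, $B_y$).

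\medskip

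\textit{Key steps.} (1) Suppose $(\phi,(f_\alpha))$ is a simplicial homotopy equivalence with inverse $(\psi,(g_\beta))$, and assume $(Y_\beta)_{\beta\in B}$ is essentially $n$-connected (the converse direction is symmetric). Fix $k\le n$ and $\alpha\in A$; we must find $\alpha'\ge\alpha$ with $\pi_k(X_\alpha)\to\pi_k(X_{\alpha'})$ trivial. (2) Apply $\pi_k$ to the left-hand commuting-up-to-homotopy triangle in Definition~\ref{dfn:hoequiv}: there is $\alpha_1\ge\psi(\phi(\alpha))$ such that the inclusion-induced map $\pi_k(X_\alpha)\to\pi_k(X_{\alpha_1})$ factors (up to the equality of induced maps coming from the simplicial homotopy) through $\pi_k(Y_{\phi(\alpha)})\xrightarrow{(g_{\phi(\alpha)})_*}\pi_k(X_{\psi(\phi(\alpha))})\hookrightarrow\pi_k(X_{\alpha_1})$. (3) Now use essential $n$-connectedness of $Y$: there is $\beta\ge\phi(\alpha)$ with $\pi_k(Y_{\phi(\alpha)})\to\pi_k(Y_\beta)$ trivial. (4) The subtlety is that step (2) routes the map through $Y_{\phi(\alpha)}$, not through $Y_\beta$; to absorb the latter, apply the ``eventually commutes up to simplicial homotopy'' property of $(\psi,(g_\beta))$ to the pair $\phi(\alpha)\le\beta$ in $B$, getting $\alpha_2\ge\psi(\beta)$ with a simplicial-homotopy-commuting square relating $g_{\phi(\alpha)}$, $g_\beta$, and the inclusions. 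Chasing $\pi_k$ around this square, together with the triviality from step (3), shows that for a suitable $\alpha'\ge\max\{\alpha_1,\alpha_2\}$ (using directedness of $A$ and that inclusion maps compose), the composite $\pi_k(X_\alpha)\to\pi_k(X_{\alpha'})$ is trivial. (5) Repeat verbatim with $\redH_k(-;R)$ in place of $\pi_k(-,y)$; no basepoint issues arise and the diagram chases are identical, using that $|H|$ induces the identity on homology for a simplicial homotopy $H$.

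\medskip

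\textit{Main obstacle.} The genuine bookkeeping difficulty — and the reason the definition of simplicial homotopy equivalence is stated with all those ``eventually'' clauses and auxiliary indices $\alpha'$, $\beta'$ — is precisely step (4): making the factorization through $Y$ land in a stage $Y_\beta$ far enough along to exploit essential triviality there, while simultaneously landing back in a single stage $X_{\alpha'}$ of the domain filtration. This is a finite diagram chase, but one has to be careful to invoke directedness of $A$ at the right moments and to track which maps are inclusion-induced (hence strictly functorial) versus only equal up to a simplicial homotopy (hence only inducing equal morphisms after applying $F$). Once the chase is organized correctly it is routine; writing it cleanly, ideally by isolating a short lemma of the form ``if each $F(X_\alpha)\to F(X_{\alpha'})$ factors eventually through the essentially trivial system $F(Y_\bullet)$, then $F(X_\bullet)$ is essentially trivial,'' is where I would spend the effort.
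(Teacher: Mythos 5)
Your proposal is correct, and it takes the same approach the paper implicitly relies on: the paper doesn't spell out a proof at all, but simply cites \cite[Lemma~2.11]{BHQ24a} for item (1) and notes that item (2) follows by the same argument. Your diagram chase — factor the inclusion-induced map $\pi_k(X_\alpha)\to\pi_k(X_{\alpha_1})$ through $Y_{\phi(\alpha)}$ via the left square in Definition~\ref{dfn:hoequiv}, push forward to a stage $Y_\beta$ where essential triviality kicks in, then use the ``eventually commutes'' square for $(\psi,(g_\beta))$ on the pair $\phi(\alpha)\le\beta$ and directedness of~$A$ to land everything in a single $X_{\alpha'}$ — is exactly the argument that lemma carries out, and your observation that the homological case is the same chase with $\redH_k(-;R)$ in place of $\pi_k$ (and no basepoint bookkeeping) matches the paper's one-line justification of item (2). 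The only point worth tightening in a final write-up is the basepoint handling you flag: $f_\alpha$ and $g_\beta$ need not be basepoint-preserving, so one should record that the simplicial homotopy witnessing commutativity supplies an explicit path in the ambient stage along which to change basepoints, and then invoke \cite[Lemma~2.8]{BHQ24a} so that essential $n$-connectedness of $(Y_\beta)$ is available at whichever vertex $f_\alpha(x)$ happens to land on.
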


The first item is a lemma in the previous article \cite[Lemma~2.11]{BHQ24a}, and the second item is established by exactly the same argument.

An important special case is when given two filtrations $(Y_\alpha)_{\alpha\in A}, (Y_\beta)_{\beta\in B}$ of the same simplicial set~$Y$. If there is a poset map $\phi \colon A \to B$ such that for every $\alpha \in A$ we have $Y_\alpha \subseteq Y_{\phi(\alpha)}$, we will say that $(Y_\beta)_{\beta \in B}$ is \textbf{cofinal} to $(Y_\alpha)_{\alpha \in A}$.
If each of the filtrations is cofinal to the other, we will simply say that ``they are cofinal''. It is easy to verify that the inclusion maps then assemble to a pair of mutually inverse simplicial homotopy equivalences. In particular, by Lemma~\ref{lem:homotopyequiv}, cofinal filtrations have the same essential connectedness and essential acyclicity properties.

\subsection{Geometric invariants of locally compact groups} \label{sec:Sigma}

For this section, fix a locally compact Hausdorff topological group~$G$, a commutative ring~$R$, and $n\in \NN$. 

The various $\Sigma$-sets of~$G$ are certain subsets of the $\RR$-vector space $\Homtop(G,\RR)$ of continuous group homomorphisms $\chi \colon G\to\RR$. These homomorphisms are typically called \textbf{characters}, and each defines a topological submonoid $G_\chi := \chi^{-1}([0,+\infty[)$ of~$G$.

Now, the definition of the $\Sigma$-sets uses certain filtrations of the free simplicial set~$\E G$ and the action of the abstract underlying group of~$G$ by left multiplication. Given a character~$\chi$, the relevant filtration is indexed by the poset~$\C(G)$ of compact subspaces of~$G$, and associates to each $C\in\C(G)$ the simplicial subset
\[G_\chi \cdot \E C := \bigcup_{g\in G_\chi } g\cdot \E C.\]
One then defines, respectively, the homotopical \cite[Definition~3.2]{BHQ24a} and homological \cite[Definition~5.1]{BHQ24b} $\Sigma$-sets as follows:

\begin{dfn}\label{dfn:sigma}    
    Membership of a character $\chi\in \Homtop(G,\RR)$ in the $n$-th \textbf{homotopical $\Sigma$-set}~$\TopS^n(G)$ and \textbf{homological $\Sigma$-set}~$\TopS^n(G;R)$ are defined, respectively, by\footnote{The definition of the homological $\Sigma$-sets $\TopS^n(G;R)$ was introduced in a somewhat different setup, where a different filtration of~$\E G$ is used. The translation to the phrasing presented here might not be immediately obvious, so we explain it in detail in Appendix~\ref{sec:appendix}.}:
    \begin{align*}
        \chi \in \TopS^n(G) &\iff \text{$(G_\chi\cdot\E C)_{C\in\C(G)}$ is essentially $(n-1)$-connected},\\
        \chi \in \TopS^n(G;R) &\iff \text{$(G_\chi\cdot\E C)_{C\in\C(G)}$ is essentially $(n-1)$-acyclic over~$R$}.
    \end{align*}
\end{dfn}

We interpret ``being essentially $(-1)$-connected/acyclic'' as a vacuous condition, so the $0$-th $\Sigma$-sets both equal $\Homtop(G;\RR)$. Thus we have descending sequences
\begin{align*}
    \Homtop(G,\RR) & =\TopS^0(G)\supseteq \TopS^1(G) \supseteq\TopS^2(G) \supseteq\ldots \supseteq \TopS^\infty(G)\\
    \Homtop(G,\RR) & =\TopS^0(G;R)\supseteq \TopS^1(G;R) \supseteq\TopS^2(G;R) \supseteq\ldots \supseteq \TopS^\infty(G;R),
\end{align*}
where 
\[\TopS^\infty(G) := \bigcap_{n\in\NN} \TopS^n(G) \qquad \text{and} \qquad \TopS^\infty(G;R) := \bigcap_{n\in\NN} \TopS^n(G;R).\]

From the definition, it is clear that if a character $\chi$~lies in a given $\Sigma$-set, then so do all its positive scalar multiples. In the special case of the $0$-character, where one considers the filtration $(G\cdot \E C)_{C\in\C(G)}$, we recover verbatim the compactness properties $\mathrm C_n$ and~$\mathrm {CP}_n(R)$ introduced by Abels and Tiemeyer~\cite{AT97}. The fact that $\Sigma$-sets refine compactness properties is made more precise by the following statement \cite[Proposition~3.5]{BHQ24a} \cite[Proposition~5.5]{BHQ24b}:

\begin{prop}[The zero character] The following equivalences hold\footnote{The second item is only stated in the given reference for the case $R=\ZZ$, and the proof therein does use this assumption to establish the case~$n=1$. However, the argument supplied for $n\ge 2$ also fits the case~$n=1$, and it in no way relies on the choice of~$R$.}:
    \begin{enumerate}
        \item $\text{$G$~is of type~$\mathrm{C}_n$} \iff 0\in \TopS^n(G) \iff \TopS^n(G) \neq \emptyset$,
        \item $\text{$G$~is of type~$\mathrm{CP}_n(R)$} \iff 0\in \TopS^n(G;R) \iff \TopS^n(G;R) \neq \emptyset$.
    \end{enumerate}
\end{prop}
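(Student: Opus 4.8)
The plan is to prove the two stated chains of equivalences by combining the definitions of $\TopS^n$ with the already-recalled identification of the $0$-character filtration with the one computing the Abels--Tiemeyer compactness properties. The structure of each item is an implication cycle of length three: $\text{type~}\mathrm C_n \Rightarrow 0\in\TopS^n(G) \Rightarrow \TopS^n(G)\neq\emptyset \Rightarrow \text{type~}\mathrm C_n$, and similarly for the homological version. I would begin by recording the observation made just before the statement: the filtration $(G_0\cdot\E C)_{C\in\C(G)}$ appearing in the definition of $0\in\TopS^n(G)$ is literally $(G\cdot\E C)_{C\in\C(G)}$, since $G_0=\chi^{-1}([0,+\infty[)=G$ for the zero character; and by definition $G$~being of type~$\mathrm C_n$ means exactly that this filtration is essentially $(n-1)$-connected (and of type~$\mathrm{CP}_n(R)$ means it is essentially $(n-1)$-acyclic over~$R$). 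Hence the first equivalence $\text{type~}\mathrm C_n \iff 0\in\TopS^n(G)$ is immediate from unwinding Definition~\ref{dfn:sigma}, and likewise in the homological case.

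Next I would handle the implication $0\in\TopS^n(G)\Rightarrow \TopS^n(G)\neq\emptyset$, which is trivial since $0$~is then an element witnessing non-emptiness. The only remaining arrow is $\TopS^n(G)\neq\emptyset \Rightarrow \text{type~}\mathrm C_n$, which is the genuine content. Here the idea is that if $\chi\in\TopS^n(G)$ for some (possibly nonzero) character~$\chi$, then the filtration $(G_\chi\cdot\E C)_{C\in\C(G)}$ is essentially $(n-1)$-connected; I want to deduce that the larger filtration $(G\cdot\E C)_{C\in\C(G)}$ is essentially $(n-1)$-connected as well. Since $G_\chi\subseteq G$, each stage $G_\chi\cdot\E C$ is a simplicial subset of $G\cdot\E C$, so the two filtrations are indexed by the same poset~$\C(G)$ with a levelwise inclusion; this says $(G\cdot\E C)_{C\in\C(G)}$ is cofinal to $(G_\chi\cdot\E C)_{C\in\C(G)}$ in the sense of Section~\ref{sec:SSets}. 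But cofinality in only one direction does not by itself transfer essential connectedness --- one needs both filtrations to be cofinal to each other. The substantive point, and what I expect to be the main obstacle, is therefore establishing the reverse cofinality: that every stage $G\cdot\E C$ is contained in some stage $G_\chi\cdot\E C'$. This is where one must use that $\chi$~is a continuous character on a locally compact group: given a compact~$C$, one translates it into the ``positive cone'' $G_\chi$ by a single element. Concretely, since $\chi(C)\subseteq\RR$ is compact, it is bounded below, say by $-t$; picking any $g\in G$ with $\chi(g)\geq t$ (such $g$ exists unless $\chi\equiv 0$, in which case there is nothing to prove), one has $\chi(gc)=\chi(g)+\chi(c)\geq 0$ for all $c\in C$, so $gC\subseteq G_\chi$, whence $g\cdot\E C\subseteq G_\chi\cdot\E{(gC)}$ and thus $G\cdot\E C=\bigcup_{h\in G}h\cdot\E C\subseteq G_\chi\cdot\E{(gC)}$ after absorbing~$h$ — one should check the indexing carefully, but morally each $G$-translate of $\E C$ already appears in $G_\chi\cdot\E{(gC)}$ because $G_\chi$ contains an entire $G$-translate of~$C$ and $G_\chi$ is closed under multiplication...

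Let me reconsider that last step, since it is the crux. The cleaner formulation: fix $C\in\C(G)$. Because $\chi$~is continuous and $C$~compact, $m:=\min\chi(C)$ exists (if $C=\emptyset$ the claim is vacuous). If $\chi=0$ there is nothing to prove, so assume $\chi\neq 0$ and choose $g_0\in G$ with $\chi(g_0)\geq -m$, so that $\chi(g_0 c)\geq 0$ for every $c\in C$, i.e.\ $g_0C\subseteq G_\chi$. Set $C':=C\cup g_0C\cup g_0^{-1}C$, or more simply just argue that for arbitrary $g\in G$ we have $g\cdot\E C = (gg_0^{-1})\cdot(g_0\cdot\E C) = (gg_0^{-1})g_0\cdot\E C$; replacing the running index $h=gg_0^{-1}$ shows $G\cdot\E C = G\cdot(g_0\cdot\E C) = G\cdot\E{(g_0 C)}$, and now $g_0C\subseteq G_\chi$ together with $G_\chi$ being a submonoid gives $G_\chi\cdot\E{(g_0C)}\supseteq$ ... still not quite, because $G\cdot\E{(g_0C)}$ ranges $h$ over all of $G$, not just $G_\chi$. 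The honest fix is: any $h\in G$ can be written $h=(hg_0^{-1}g_0')g_0'^{-1}g_0$-type manipulation is getting circular. I think the correct and standard argument is the one in \cite[Proposition~3.5]{BHQ24a}: one shows the two filtrations $(G\cdot\E C)_C$ and $(G_\chi\cdot\E C)_C$ are cofinal by the translation trick applied with a \emph{little more care} — namely $G\cdot \E C = \bigcup_{g\in G} g\cdot \E C$, and for each such $g$ one has $g\cdot \E C \subseteq G_\chi \cdot \E{C_g}$ where $C_g$ is a compact enlargement depending on $g$; since $\C(G)$ is directed one would need a \emph{single} compact working for all $g$ simultaneously, which fails in general — so in fact the right statement proved in the reference is subtler and I would simply cite it rather than reprove it. Accordingly, in the paper itself I would keep this proof very short: note the $\chi=0$ equivalence is immediate from the definitions, note $0\in\TopS^n\Rightarrow\TopS^n\neq\emptyset$ is trivial, and for $\TopS^n\neq\emptyset\Rightarrow$ type~$\mathrm C_n$ observe that $\chi\in\TopS^n(G)$ implies $G_\chi$ is of type~$\mathrm C_n$ as a consequence of essential $(n-1)$-connectedness of $(G_\chi\cdot\E C)_C$ interpreted relative to $G_\chi$, and then invoke that $G_\chi$ and $G$ have the same compactness properties because $G_\chi$ is ``large'' in $G$ (cocompact up to the kernel direction) — but honestly the slickest route is just to defer entirely to \cite[Proposition~3.5]{BHQ24a} and \cite[Proposition~5.5]{BHQ24b} as the excerpt already does, and present the statement as a direct recollection with the one-line remark that the $\chi=0$ part is definitional and the non-emptiness implications follow from positive-scalar-invariance plus the translation argument in those references.
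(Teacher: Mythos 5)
Your overall conclusion — that this proposition should simply be cited rather than reproved — matches the paper, which offers no proof beyond the references and the footnote removing the $R=\ZZ$ restriction. Your treatment of the first two implications (the definitional equivalence of type $\mathrm{C}_n$ with $0\in\TopS^n(G)$, and the trivial implication $0\in\TopS^n(G)\Rightarrow\TopS^n(G)\neq\emptyset$) is correct. The trouble is your attempted reconstruction of the remaining implication.

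The filtrations $(G\cdot\E C)_{C\in\C(G)}$ and $(G_\chi\cdot\E C)_{C\in\C(G)}$ are \emph{not} cofinal when $\chi\neq 0$, for the reason you begin to suspect: an inclusion $G\cdot\E C\subseteq G_\chi\cdot\E C'$ would require a single $h\in G$, depending only on $C$, such that $hC\subseteq C'$ and $gh^{-1}\in G_\chi$ for \emph{every} $g\in G$; but $\chi$ is unbounded below on $G$, so no such $h$ exists. Your fallback gesture toward $G_\chi$ being ``cocompact'' or ``large'' in $G$ is not an argument: $G_\chi$ is merely a submonoid, and neither filtration is contained in the other at a fixed stage. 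The actual mechanism, which is what the cited references use, is to translate each \emph{individual sphere or cycle} rather than the whole filtration. Given a simplicial map $\eta\colon\SD^m(\partial\Delta^{k+1})\to G\cdot\E C$, its image is a finite simplicial set, hence covered by finitely many translates $g_1\cdot\E C,\dots,g_m\cdot\E C$; choosing $h\in G$ with $\chi(h)\geq-\min_i\chi(g_i)$ places $h\cdot\eta$ inside $G_\chi\cdot\E C$, where the hypothesis kills it in some $G_\chi\cdot\E D$; translating back by $h^{-1}$ shows $\eta$ is nullhomotopic in $G\cdot\E D$. (The change of basepoint under translation is harmless since $\E G$ is connected and essential triviality is basepoint-independent.) The homological version is the same with chains in place of spheres. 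The key difference from a filtration-level cofinality is that the compact extent of any single sphere is bounded, so a single translate suffices for it, even though no single compact reaches all of $G$.
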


A more flexible description of the $\Sigma$-sets is given by using a proper action on a locally compact space~$X$, and then considering a filtration of~$\E X$ indexed by the compact subspaces of~$X$ \cite[Proposition~7.2]{BHQ24a}:

\begin{prop}[$\Sigma$-sets via proper actions]\label{prop:proper_actions}
    Let $X$~be a nonempty locally compact Hausdorff space with a proper $G$-action and let $\chi \colon G \to \RR$ be a character. Then:
    \begin{enumerate}
        \item $\chi \in \TopS^n(G)$ if and only if the filtration $(G_\chi \cdot \E K)_{K\in \C(X)}$ is essentially $(n-1)$-connected,
        \item $\chi \in \TopS^n(G;R)$ if and only if the filtration $(G_\chi \cdot \E K)_{K\in \C(X)}$ is essentially $(n-1)$-acyclic over~$R$.\footnote{The given reference only states the result for the homotopical $\Sigma$-sets, but since that proof shows that the filtrations $(G_\chi \cdot \E C)_{C\in \C(G)}$ of~$\E G$ and $(G_\chi \cdot \E K)_{K\in \C(X)}$ of~$\E X$ are simplicially homotopy equivalent, it serves just as well for establishing the homological statement (using Lemma~\ref{lem:homotopyequiv}).}
    \end{enumerate}
\end{prop}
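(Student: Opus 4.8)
The plan is to reduce the whole statement to a single claim about simplicial homotopy equivalences: the filtration $(G_\chi\cdot\E C)_{C\in\C(G)}$ of $\E G$ appearing in Definition~\ref{dfn:sigma} is simplicially homotopy equivalent to the filtration $(G_\chi\cdot\E K)_{K\in\C(X)}$ of $\E X$. Granting this, both items follow at once: Lemma~\ref{lem:homotopyequiv} transports essential $(n-1)$-connectedness and essential $(n-1)$-acyclicity over~$R$ between the two filtrations (whose ambient simplicial sets $\E G$ and $\E X$ are connected, as $G,X\neq\emptyset$), and one then reads the conclusion off the defining equivalences of Definition~\ref{dfn:sigma}. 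This is also why, as the footnote to the statement notes, the homotopical argument simultaneously settles the homological one.

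To build the equivalence, fix $x_0\in X$ and let $o\colon G\to X$, $g\mapsto g\cdot x_0$, be the orbit map. It is $G$-equivariant, so $\E o\colon\E G\to\E X$ is equivariant for the abstract $G$-actions and carries $G_\chi\cdot\E C$ into $G_\chi\cdot\E(C\cdot x_0)$, with $C\cdot x_0=o(C)$ compact. Thus the poset map $\phi\colon\C(G)\to\C(X)$, $C\mapsto C\cdot x_0$, together with the restrictions of $\E o$, is a family of simplicial maps commuting strictly with the filtration inclusions. For the homotopy inverse one wants a set-theoretic ``section'' $\gamma\colon X\to G$ of $o$: on the orbit $G\cdot x_0$, let $\gamma(y)$ be any element with $\gamma(y)\cdot x_0=y$; off the orbit, choose once and for all a representative $x_{[y]}$ in each $G$-orbit and let $\gamma(y)$ be any element with $\gamma(y)\cdot x_{[y]}=y$. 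Continuity of $\gamma$ plays no role, since the filtrations depend only on the underlying $G$-set. Using that the transporter sets $\{g\in G\mid g\cdot L\cap K\neq\emptyset\}$ are compact for $L,K$ compact (this is properness), one checks that $\E\gamma\colon\E X\to\E G$ restricts to simplicial maps $G_\chi\cdot\E K\to G_\chi\cdot\E\psi(K)$ for a suitable poset map $\psi\colon\C(X)\to\C(G)$.

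It then remains to verify the compatibility conditions of Definition~\ref{dfn:hoequiv}. Since $\E o$ and $\E\gamma$ restrict from global maps, the only non-trivial diagrams are the two ``mixed'' ones, which amount to exhibiting simplicial homotopies from $\E\gamma\circ\E o$ to a suitable inclusion of $\E G$ and from $\E o\circ\E\gamma$ to a suitable inclusion of $\E X$. Both are the canonical simplicial homotopies into a subcomplex of a free simplicial set, given on a $k$-simplex with vertices $v_0,\dots,v_k$ by $(\sigma,\tau_i^k)\mapsto(f(v_0),\dots,f(v_{i-1}),g(v_i),\dots,g(v_k))$, and the only thing to check is that each lands in a compact-indexed stage. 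For $\E\gamma\circ\E o$ this uses that $\operatorname{Stab}(x_0)$ is compact (again a consequence of properness): the intermediate simplices lie in $G_\chi\cdot\E(C\cdot\operatorname{Stab}(x_0))$. For $\E o\circ\E\gamma$ the intermediate simplices involve the points $\gamma(y)\cdot x_0$ with $y$ ranging over a $G_\chi$-translate of a fixed compact set, and one needs these to remain inside a fixed compact subset of $X$.

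This last point is the main obstacle: controlling $\gamma$ away from the orbit $G\cdot x_0$, equivalently, choosing the assignment $[y]\mapsto x_{[y]}$ of orbit representatives so that $\{x_{[y]}\mid y\in K\}$ is relatively compact for every compact $K\subseteq X$. For a transitive action there is nothing to prove: one takes $\gamma(y)\cdot x_0=y$ everywhere, so $\gamma(y)^{-1}\cdot y\equiv x_0$ and the relevant sets all lie inside the transporter $\{g\in G\mid g\cdot x_0\in K\}$, which is compact by properness. In general, the orbit map $X\to X/G$ is an open surjection onto a locally compact Hausdorff orbit space, so compact subsets of $X/G$ lift to compact subsets of $X$; assembling such lifts into a single (not necessarily continuous) section of $X\to X/G$ that sends compacta to relatively compact sets — hence a controlled choice of representatives — is where the genuine work resides. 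Once the simplicial homotopy equivalence is in hand, Lemma~\ref{lem:homotopyequiv} gives both equivalences in the statement.
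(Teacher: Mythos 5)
Your strategy is the right one and is exactly what the paper's footnote points to: reduce both parts to a single claim that the filtrations $(G_\chi\cdot\E C)_{C\in\C(G)}$ of $\E G$ and $(G_\chi\cdot\E K)_{K\in\C(X)}$ of $\E X$ are simplicially homotopy equivalent, and then invoke Lemma~\ref{lem:homotopyequiv}. (The paper itself does not re-prove this; it cites \cite[Proposition~7.2]{BHQ24a} and observes via the footnote that that argument, being a simplicial homotopy equivalence, also gives the homological version.) The direction via the orbit map $o\colon g\mapsto g\cdot x_0$, with $\phi(C)=C\cdot x_0$, is set up correctly.

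The gap is in the other direction, and you identify it yourself without closing it. For a non-transitive action, you need a set-theoretic ``section'' $\gamma\colon X\to G$, and for the resulting $g_K\colon G_\chi\cdot\E K\to G_\chi\cdot\E\psi(K)$ to be well-defined and to satisfy the compatibility diagrams of Definition~\ref{dfn:hoequiv}, two things must be controlled: (i) the chosen orbit representatives of points of any compact $K$ must lie in a compact $K^*\subseteq X$, and (ii) the ``stabilizer cocycle'' $\gamma(g\cdot y)^{-1}g\,\gamma(y)\in\operatorname{Stab}(y^*)$ that appears when you translate must itself land in a compact depending only on $K$. Point~(ii) follows from~(i) by properness (as you hint), but point~(i) is precisely the statement that the orbit map $X\to X/G$ admits a set-theoretic section carrying compacts to relatively compact sets, globally and consistently; the argument that compacta of $X/G$ lift locally does not by itself assemble into such a section (one needs, e.g., a paracompactness-style locally finite refinement or a $\sigma$-compactness assumption), and you explicitly defer this as ``where the genuine work resides.'' As written, the proposal is therefore a correct reduction plus an honest acknowledgement of an unfilled step, not a complete proof.
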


We will in fact only need the ``$\chi=0$'' case of these statements, which concerns the compactness properties $\mathrm C_n$ and $\mathrm{CP}_n(R)$ of Abels and Tiemeyer, and was already established in their article \cite[Theorem~3.2.2]{AT97}.

In historically retrograde fashion, we end this section with a word on the classical $\Sigma$-sets for abstract groups~$\Gamma$. The following is a convenient description, which we will make use of in Section~\ref{sec:BS}.
Given $n\in \NN$, let $X$~be a contractible (resp. $R$-acyclic) CW-complex where $\Gamma$~acts freely by cell-permuting homeomorphisms, and the $n$-skeleton $X^{(n)}$ has only finitely-many $\Gamma$-orbits of cells. Assume moreover that $X^{(0)}$~has exactly one $\Gamma$-orbit, so we may identify $X^{(0)} = \Gamma$. The existence of such~$X$ is equivalent to $\Gamma$~being of type $\mathrm F_n$ (resp. type $\mathrm {FP}_n(R)$).

Now, given a character $\chi\colon \Gamma \to \RR$, and $r\in \RR$, we write~$X_{\chi \ge r}$ to denote the maximal subcomplex of~$X$ containing the vertices in $\chi^{-1}([r, +\infty[)$.

\begin{dfn}
    The character $\chi$ lies in $\Sigma^n(\Gamma)$ (resp. $\Sigma^n(\Gamma;R)$) if and only if the filtration $(X_{\chi \ge r})_{r\in \RR}$ of~$X$ is essentially $(n-1)$-connected (resp. essentially $(n-1)$-acyclic over~$R$).
\end{dfn}

This definition matches the definition of the $\Sigma$-sets for locally compact groups, once $\Gamma$~is equipped with the discrete topology \cite[Theorem~A]{BHQ24a} \cite[Corollary~7.7]{BHQ24b}:

\begin{prop}[$\Sigma$-sets for discrete groups]
If $\Gamma$~is interpreted as a topological group with the discrete topology, then
\[\TopS^n(\Gamma) = \Sigma^n(\Gamma) \qquad \text{and} \qquad \TopS^n(\Gamma;R) = \Sigma^n(\Gamma;R).\]
\end{prop}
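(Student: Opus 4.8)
The plan is to fix a character $\chi \colon \Gamma \to \RR$ and prove the two equivalences ``$\chi \in \TopS^n(\Gamma) \iff \chi \in \Sigma^n(\Gamma)$'' and ``$\chi \in \TopS^n(\Gamma;R) \iff \chi \in \Sigma^n(\Gamma;R)$'' in parallel, treating the homotopical and homological settings uniformly. By definition the left-hand sides concern the filtration $(\Gamma_\chi \cdot \E C)_{C \in \C(\Gamma)}$ of $\E\Gamma$ (here $\C(\Gamma)$ is the poset of finite subsets of $\Gamma$), and the right-hand sides concern the filtration $(X_{\chi \ge r})_{r \in \RR}$ of a chosen model $X$ (a contractible, resp.\ $R$-acyclic, free $\Gamma$-CW-complex with $X^{(0)} = \Gamma$ and finitely many $\Gamma$-orbits of cells in dimensions $\le n$), where $\RR$ is given the order making the subcomplexes grow. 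So by Lemma~\ref{lem:homotopyequiv} it suffices to exhibit a simplicial homotopy equivalence, in the sense of Definition~\ref{dfn:hoequiv}, between these two filtrations — or rather between their $n$-skeleta, which is harmless since essential $(n-1)$-connectedness and essential $(n-1)$-acyclicity depend only on $n$-skeleta. Since $\Sigma^n(\Gamma)$ and $\Sigma^n(\Gamma;R)$ do not depend on the choice of $X$, we may and do take $X$ to be a simplicial complex.

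First I would build the comparison maps. Realizing $\E\Gamma$ as a contractible free $\Gamma$-CW-complex, equivariant obstruction theory produces $\Gamma$-equivariant cellular maps $F \colon |\E\Gamma| \to X$ and $G \colon X \to |\E\Gamma|$, both the identity on $0$-skeleta (using $\E\Gamma^{(0)} = \Gamma = X^{(0)}$) and with $G \circ F$, $F \circ G$ equivariantly homotopic to the respective identities — all obstructions vanish because the targets are contractible and the sources $\Gamma$-free. In the homological case ($X$ merely $R$-acyclic) I would instead work with augmented chain complexes: $\Ch(\E\Gamma;R)$ and the augmented cellular chain complex of $X$ are complexes of free $R\Gamma$-modules (a basis of the degree-$k$ module being a set of orbit representatives of $k$-cells) that resolve $R$ via their augmentations, hence are $R\Gamma$-chain homotopy equivalent through maps that are the identity in degrees $0$ and $-1$. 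In either case the key point — this is where the finiteness hypothesis on orbits of cells in dimensions $\le n$ enters — is that $F$, $G$, the homotopies, and their chain-level analogues are automatically $\chi$-\emph{controlled}: up to dimension $n$ only finitely many $\Gamma$-orbits of cells are involved on each side, each cell's image lies in a finite subcomplex, and by $\Gamma$-equivariance it suffices to bound $\chi$-deviation on the finitely many orbit representatives, yielding a single constant $D$ (extending $\chi$ to cells by the minimum of its vertex values).

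With $D$ in hand I would assemble the simplicial homotopy equivalence. On one side $F$ carries $(\Gamma_\chi \cdot \E C)^{(n)}$ into $X_{\chi \ge \phi(C)}$ for an explicit bound $\phi(C) := -\bigl(D + \max_{c\in C}|\chi(c)|\bigr)$, and $\phi \colon \C(\Gamma) \to \RR$ is a poset map. On the other side, $X_{\chi\ge r}^{(n)}$ is a union of translates $g\rho_i$ of orbit representatives with $\chi(g)$ bounded below in terms of $r$; since $G(g\rho_i) = g\cdot G(\rho_i)$ lies in $g$ times a fixed finite subcomplex, and the identity $g\cdot(-) = (gt^{-1})\cdot(t\cdot(-))$ lets one replace $g$ by an element of $\Gamma_\chi$ at the cost of enlarging that finite subcomplex, one gets a finite set $\psi(r)$ with $G(X_{\chi\ge r})^{(n)} \subseteq \Gamma_\chi \cdot \E\psi(r)$, so $\psi$ is a poset map as well. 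The equivariant homotopies between $G\circ F$, $F\circ G$ and the identities are likewise $\chi$-controlled, so after enlarging indices by bounded amounts they restrict to the filtration stages; rendering the maps and homotopies simplicial (working over a triangulation of $X$ and invoking the combinatorial machinery behind Lemma~\ref{lem:comb_pi_k}, applied on the free quotient) produces exactly the diagrams of Definition~\ref{dfn:hoequiv}, with chain homotopies replacing homotopies in the homological case. Then Lemma~\ref{lem:homotopyequiv} gives the four asserted equalities at once.

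The main obstacle is not a single deep step but the bookkeeping: one must verify that the four homotopy-commutative squares of Definition~\ref{dfn:hoequiv} close up after only a \emph{bounded} shift of the index, and that every map and homotopy in sight can be taken simplicial, which forces passage to triangulations and a (quotient-level, hence straightforward) use of semisimplicial approximation while preserving $\chi$-control. A secondary subtlety in the homological setting is that one compares only $n$-truncated resolutions, so one should either use genuine length-$n$ partial free resolutions of $R$ over $R\Gamma$ or note that the relevant chain homotopy equivalence and chain homotopies can be confined to degrees $\le n+1$ — the standard $\mathrm{FP}_n$-versus-$\mathrm{FP}_\infty$ manoeuvre.
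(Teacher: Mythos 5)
The paper does not prove this proposition; it cites \cite[Theorem~A]{BHQ24a} and \cite[Corollary~7.7]{BHQ24b} and treats the statement as known. So your proposal can only be judged on its own merits, not against an in-text argument.

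The overall strategy --- build $\Gamma$-equivariant comparison maps between $\E\Gamma$ and a model $X$, identity on the shared $0$-skeleton $\Gamma$, use equivariance plus the finiteness hypotheses to get $\chi$-control, and conclude via Lemma~\ref{lem:homotopyequiv} --- is plausible and in the right spirit. (You should also dispatch the trivial case: if $\Gamma$~is not of type~$\mathrm F_n$, resp.~$\mathrm{FP}_n(R)$, no model $X$~exists, both sides of the equality are empty by the Proposition on the zero character, and there is nothing to compare.) However, there are concrete gaps.

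The claim ``up to dimension $n$ only finitely many $\Gamma$-orbits of cells are involved on each side'' is simply false for the $\E\Gamma$ side: already $\E\Gamma^{(1)}$ has one $\Gamma$-orbit of nondegenerate $1$-simplices per element of $\Gamma\setminus\{1\}$, so infinitely many for $\Gamma$~infinite. Consequently there is no single constant~$D$ bounding the $\chi$-deviation of~$F$ on all of $\E\Gamma^{(n)}$, and the displayed formula $\phi(C)=-\bigl(D+\max_{c\in C}\lvert\chi(c)\rvert\bigr)$ does not follow as written. What saves the argument is that each filtration stage $\Gamma_\chi\cdot\E C$ is generated under $\Gamma_\chi$-translation by the \emph{finite} subcomplex $\E C$, so $F$~has bounded $\chi$-deviation $D_C$ on $\E C^{(n)}$ with $D_C$ depending on~$C$; then $\phi(C):=-\bigl(D_C+\max_{c\in C}\lvert\chi(c)\rvert\bigr)$ does work (and is still a poset map, after choosing the $D_C$ monotonically). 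This is fixable but, as phrased, wrong. Similarly, the $\psi$~you sketch is not automatically order-preserving: the translate $t_0=t_0(r)$ depends on~$r$, and there is no reason the finite sets $t_0(r)\cdot K$ nest as $r$~decreases. One needs to enlarge $\psi(r)$ monotonically, and to keep it finite one should first pass to a countable cofinal subposet of~$\RR$ (say $-\NN$), which is harmless by the cofinality observations in the paper.

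Finally, the dismissal of simpliciality as ``bookkeeping'' hides the hardest step. The maps $F,G$ produced by equivariant obstruction theory, and the homotopies between $G\circ F$, $F\circ G$ and the identities, are cellular or merely continuous. Definition~\ref{dfn:hoequiv} and Lemma~\ref{lem:homotopyequiv} are stated for \emph{simplicial} maps between the filtration stages themselves, not between their subdivisions. Semisimplicial approximation (Lemma~\ref{lem:comb_pi_k}) produces simplicial maps out of $\SD^m(-)$, which does not directly feed into Definition~\ref{dfn:hoequiv}. One can either (a)~prove a version of Lemma~\ref{lem:homotopyequiv} that tolerates maps out of subdivisions, or (b)~sidestep simpliciality entirely and argue topologically on geometric realizations, observing that essential connectedness/acyclicity are invariant under realization. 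Either is viable, but neither is a routine detail; the interaction of subdivision with the $\Gamma_\chi$-translation and with the bound $D_C$ needs to be checked, and this is exactly where a substantial share of the work in \cite{BHQ24a} lies. As it stands, your argument is a convincing sketch of \emph{why} the proposition should be true, but it does not yet constitute a proof.
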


\subsection{Commensurability and TDLC completions}

\subsubsection{TDLC completions of Hecke pairs}

Two  subgroups $\Lambda,\Lambda'$ of a group~$\Gamma$ are \textbf{commensurate} if their intersection $\Lambda\cap \Lambda'$ has finite index in both $\Lambda$ and~$\Lambda'$. This is well-known to be an equivalence relation among the subgroups of~$\Gamma$. 

The subgroup~$\Lambda$ is said to be \textbf{commensurated} in~$\Gamma$ 
if it is commensurate to all its conjugates.\footnote{In the literature, commensurated subgroups  are sometimes referred to as ``inert'', ``quasi-normal'' or ``almost normal''.}

\begin{lem}[Examples of commensurated subgroups]
    The following subgroups are always commensurated:
    \begin{enumerate}
        \item normal subgroups,
        \item compact open subgroups of a topological group,
        \item preimages of commensurated subgroups by group homomorphisms.
    \end{enumerate}
\end{lem}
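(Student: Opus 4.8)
The plan is to verify each of the three claims directly from the definitions of commensurate and commensurated subgroups.

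\medskip

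\noindent\textbf{Item (1).} If $N\trianglelefteq\Gamma$ is normal, then for every $g\in\Gamma$ we have $gNg^{-1}=N$, so $N$ is commensurate to each of its conjugates (indeed equal to them), which is the definition of being commensurated.

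\medskip

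\noindent\textbf{Item (2).} Let $U$ be a compact open subgroup of a topological group~$\Gamma$, and fix $g\in\Gamma$. Then $gUg^{-1}$ is again a compact open subgroup, since conjugation by~$g$ is a homeomorphism and a group automorphism. The intersection $U\cap gUg^{-1}$ is therefore an open subgroup of both $U$ and $gUg^{-1}$. An open subgroup of a compact group has finite index (its cosets form an open cover with no proper subcover), so $[U:U\cap gUg^{-1}]<\infty$ and $[gUg^{-1}:U\cap gUg^{-1}]<\infty$. Hence $U$ and $gUg^{-1}$ are commensurate, and $U$ is commensurated in~$\Gamma$.

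\medskip

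\noindent\textbf{Item (3).} Let $f\colon\Gamma\to\Gamma'$ be a group homomorphism and let $\Lambda'\subseteq\Gamma'$ be commensurated; put $\Lambda:=f^{-1}(\Lambda')$. Fix $g\in\Gamma$ and set $g':=f(g)$. Since $f$ is a homomorphism, $f^{-1}(g'\Lambda'g'^{-1})=g\,f^{-1}(\Lambda')\,g^{-1}=g\Lambda g^{-1}$, and likewise $f^{-1}(\Lambda'\cap g'\Lambda'g'^{-1})=\Lambda\cap g\Lambda g^{-1}$. The main point is the elementary fact that taking preimages under a group homomorphism does not increase index: if $H\le K\le\Gamma'$ then the induced map on coset spaces $f^{-1}(H)\backslash f^{-1}(K)\to H\backslash K$ (sending $f^{-1}(H)x\mapsto Hf(x)$) is well defined and injective, so $[f^{-1}(K):f^{-1}(H)]\le[K:H]$. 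Applying this with $K=\Lambda'$, $H=\Lambda'\cap g'\Lambda'g'^{-1}$, and then with $K=g'\Lambda'g'^{-1}$, $H=\Lambda'\cap g'\Lambda'g'^{-1}$, the finiteness of $[\Lambda':\Lambda'\cap g'\Lambda'g'^{-1}]$ and $[g'\Lambda'g'^{-1}:\Lambda'\cap g'\Lambda'g'^{-1}]$ (which holds because $\Lambda'$ is commensurated) gives finiteness of $[\Lambda:\Lambda\cap g\Lambda g^{-1}]$ and $[g\Lambda g^{-1}:\Lambda\cap g\Lambda g^{-1}]$. Thus $\Lambda$ is commensurate to $g\Lambda g^{-1}$ for every $g\in\Gamma$, i.e.\ $\Lambda$ is commensurated.

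\medskip

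\noindent None of these steps presents a genuine obstacle; the only point requiring a moment's care is the injectivity of the map on coset spaces in item~(3), which is where the ``preimage does not increase index'' principle comes from and which makes the conjugation argument work without any normality hypothesis.
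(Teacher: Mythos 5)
Your proof is correct, and for items (1) and (2) it is essentially identical to the paper's (the paper phrases (2) as ``the cosets of $L\cap gLg^{-1}$ inside $L$ form an open cover, hence are finite in number,'' which is the same as your ``open subgroups of compact groups have finite index''). For item (3), you and the paper rely on the same core fact---that taking preimages of left cosets under a homomorphism gives an injective map on coset spaces---but you package it more cleanly: you extract the general principle ``$[f^{-1}(K):f^{-1}(H)]\le[K:H]$'' and apply it directly to $H=\Lambda'\cap g'\Lambda'g'^{-1}$ inside $K=\Lambda'$ and $K=g'\Lambda'g'^{-1}$, using $f^{-1}(\Lambda'\cap g'\Lambda'g'^{-1})=\Lambda\cap g\Lambda g^{-1}$ and $f^{-1}(g'\Lambda'g'^{-1})=g\Lambda g^{-1}$; the paper instead argues via a choice of coset representatives $\{g_i\}$ meeting $g\Lambda g^{-1}$ and shows that the corresponding cosets $\phi(g_i)L$ stay distinct. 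The two arguments are the same in substance; yours is a bit more streamlined because the index inequality is stated and proved once and then reused, whereas the paper's counting argument is repeated in spirit for each index.
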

\begin{proof}
(1) It is obvious that normal subgroups are commensurated.

(2) If $L$~is a compact open subgroup of a topological group~$G$, then for every $g\in G$ the cosets of $L \cap g L g^{-1}$ contained in~$L$ partition it into open subsets. Compactness of~$L$ implies they must be finite in number (and similarly within~$gLg^{-1}$). 

(3) Let $\phi \colon \Gamma \to G$ be a group homomorphism, let $L\subseteq G$ be a commensurated subgroup and let $\Lambda:= \phi^{-1}(L)$, which we will show is commensurated in~$\Gamma$.

First note that the $\phi$-induced map of cosets $\Gamma / \Lambda \to G/L$ is injective, since for every $g\in \Gamma$ we have
$$\phi^{-1}(\phi(g\Lambda)) = \phi^{-1} (\phi(g) L)  =\phi^{-1}(\phi(g)) \phi^{-1}(L)=g \ker (\phi) \Lambda = g\Lambda.$$

Now, given $g\in \Gamma$,
let $\{g_i\}_{i\in I} \subseteq \Gamma$ be representatives of all the left $\Lambda$-cosets having nonempty intersection with~$g \Lambda g^{-1}$, and assume all~$g_i\Lambda$ are distinct. In particular, $[g\Lambda g^{–1} : \Lambda \cap g \Lambda g^{-1}] = |I|$. By the preceding observation, the cosets $\{\phi(g_i) L\}_{i\in I}$ are all distinct. Moreover, they cover~$L$ and intersect it nontrivially, so $[\phi(g)L \phi(g)^{-1}: \phi(g)L \phi(g)^{-1} \cap L] =|I|$. Since $L$~is commensurated, $I$~is finite.

By the same argument, also the index $[\Lambda: \Lambda \cap g \Lambda g^{-1}] = [g^{-1}\Lambda g: \Lambda \cap g^{-1} \Lambda g]$ is finite, so we conclude $\Lambda$~is commensurated.
\end{proof}

A {\bf Hecke pair} is a pair of groups $(\Gamma, \Lambda)$, where $\Gamma$ is a Hausdorff topological group and $\Lambda$ is an open
subgroup of $\Gamma$ that is commensurated. 
A {\bf TDLC completion} of $(\Gamma, \Lambda)$ is a TDLC group~$G$ together with a continuous homomorphism $\phi\colon \Gamma\to G$ with dense image such that $\Lambda=\phi^{-1}(L)$ for some compact open subgroup $L\subseteq G$.  Note that the map~$\phi$ does not have to be injective. A number of instances of TDLC~completions have been studied in the literature~\cite{S80, B93}, culminating in a unified theory formulated by Reid and Wesolek~\cite{RW19}. 

Given a Hecke pair $(\Gamma,\Lambda)$, two TDLC completions play a central role in the theory: the  Schlichting completion $\alpha\colon \Gamma\to \Gamma\slcpt \Lambda$, which is one of the protagonists of this article and whose explicit construction will be recalled in Section~\ref{sec:schlichting}, and the  Belyaev completion $\beta \colon \Gamma\to\hat \Gamma_\Lambda$.
Reid and Wesolek have shown \cite[Theorem~5.4]{RW19}\footnote{The given reference states the existence of unique continuous quotient maps~$\hat \phi, \check \phi$ with compact kernels, which is a slightly less refined statement. But uniqueness among continuous maps follows from the image of $\Gamma$~being dense in all completions.} that these are, respectively, the final and initial completions, in the following sense:
\begin{thm}[Universal TDLC completions]
\label{thm:compl}
Let $\phi\colon \Gamma \to G$ be some TDLC completion of a Hecke pair~$(\Gamma, \Lambda)$.  Then there are unique continuous maps $\hat\phi \colon \hat \Gamma_{\Lambda}
 \to G$ and $\check\phi  \colon G \to \Gamma\slcpt \Lambda$ such that the following diagram commutes: 
\[
\xymatrix{
& \Gamma\ar[ld]_{\beta}\ar[d]^\phi\ar[rd]^{\alpha}\\
\hat \Gamma_{\Lambda}\ar[r]_{\hat\phi}& G\ar[r]_{\check\phi}&\Gamma\slcpt \Lambda}.
\]
Moreover, $\hat\phi$ and $\check\phi$ are quotient maps with compact
kernels. 
\end{thm}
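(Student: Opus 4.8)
Here is my plan for proving Theorem~\ref{thm:compl}. This coincides with \cite[Theorem~5.4]{RW19} (strengthened in the uniqueness clause as noted in the footnote), and I would organise the argument in three movements: dispose of uniqueness by a density argument, produce $\check\phi$ from the realisation of $\Gamma\slcpt\Lambda$ inside $\Sym(\Gamma/\Lambda)$, and produce $\hat\phi$ from the universal property of the Belyaev completion. Uniqueness is the soft part: $\beta(\Gamma)$ is dense in $\hat\Gamma_\Lambda$ and $\phi(\Gamma)$ is dense in $G$, while $G$ and $\Gamma\slcpt\Lambda$ are Hausdorff, so a continuous map $\hat\phi$ with $\hat\phi\circ\beta=\phi$ is determined on $\beta(\Gamma)$, and a continuous $\check\phi$ with $\check\phi\circ\phi=\alpha$ is determined on $\phi(\Gamma)$; in each case there is at most one such map. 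What remains is existence.

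For $\check\phi$ I would use the construction of $\Gamma\slcpt\Lambda$ recalled in Section~\ref{sec:schlichting}, namely as the closure of the image of $\Gamma$ in $\Sym(\Gamma/\Lambda)$ under left multiplication, with the $\alpha$-preimage of the ambient compact open subgroup equal to $\Lambda$ (here commensuration of $\Lambda$ is used). The $\phi$-induced map $\Gamma/\Lambda\to G/L$ is injective by the coset computation used to show that preimages of commensurated subgroups are commensurated, and surjective because $L$ is open and $\phi(\Gamma)$ dense, whence $\phi(\Gamma)L=G$. Thus $G$ acts on $\Gamma/\Lambda\cong G/L$, giving a continuous homomorphism $G\to\Sym(\Gamma/\Lambda)$ with kernel the compact normal subgroup $N:=\bigcap_{g\in G}gLg^{-1}$. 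Its image is closed in $\Sym(\Gamma/\Lambda)$: the stabiliser of the base coset is $L/N$, which is compact hence closed, and a limit point of the image agrees with some element of it at the base coset, so the residual element lies in that closed stabiliser. Since the image also contains the dense subgroup $\alpha(\Gamma)$, it is identified with $\Gamma\slcpt\Lambda$ (with the quotient topology from $G/N$ matching the subspace topology, as in the standard construction). I would then take $\check\phi$ to be the projection $G\to G/N$: it is open, hence a quotient map, its kernel $N$ is compact, and $\check\phi\circ\phi=\alpha$ by construction.

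For $\hat\phi$ the key input is the construction and universal property of the Belyaev completion from \cite{RW19}, which directly supplies the continuous map $\hat\phi\colon\hat\Gamma_\Lambda\to G$ with $\hat\phi\circ\beta=\phi$; the remaining work is to check it is a quotient map with compact kernel. Writing $\hat\Lambda:=\overline{\beta(\Lambda)}$, a compact open subgroup of $\hat\Gamma_\Lambda$ by the structure of the Belyaev completion, I would first show $\hat\phi^{-1}(L)=\hat\Lambda$: the inclusion $\supseteq$ holds because $\hat\phi^{-1}(L)$ is closed and contains $\beta(\Lambda)$, and $\subseteq$ because $\hat\phi^{-1}(L)$ is open and hence contained in the closure of $\hat\phi^{-1}(L)\cap\beta(\Gamma)=\beta(\Lambda)$. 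Consequently $\ker\hat\phi\subseteq\hat\Lambda$ is compact. Next, $\hat\phi(\hat\Lambda)$ is compact, hence closed, and contains $\phi(\Lambda)$, which is dense in $L$, so $\hat\phi(\hat\Lambda)=L$; therefore the image of $\hat\phi$ is a subgroup containing $\phi(\Gamma)\cup L$, which generates $G$ since $\phi(\Gamma)L=G$, so $\hat\phi$ is surjective. Finally, $\hat\phi$ is open: $\hat\phi|_{\hat\Lambda}\colon\hat\Lambda\onto L$ is a continuous surjection of compact Hausdorff groups, hence open onto $L$, and since $\hat\Lambda$ is open in $\hat\Gamma_\Lambda$ this makes $\hat\phi$ carry a neighbourhood basis of the identity to neighbourhoods of the identity. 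Hence $\hat\phi$ is a quotient map with compact kernel.

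The step I expect to be the real obstacle is the Belyaev side: short of importing \cite{RW19} wholesale, one must first construct $\hat\Gamma_\Lambda$ — for instance as a relative profinite completion, or as an inverse limit of the $\Gamma$-actions on $\Gamma/U$ over the commensurated finite-index subgroups $U\le\Lambda$ — and then prove it is a TDLC completion receiving a continuous map from every other TDLC completion of $(\Gamma,\Lambda)$. By comparison the Schlichting side is routine, its one genuine ingredient being the standard fact that a faithfully acting TDLC group with compact open point stabilisers sits as a closed subgroup of the ambient symmetric group.
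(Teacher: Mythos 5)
The paper does not prove this theorem at all: the statement is cited verbatim from Reid--Wesolek \cite[Theorem 5.4]{RW19}, and the only local addition is the footnote observing that uniqueness among all continuous maps (not merely among quotient maps with compact kernel, as stated in the reference) follows from density of $\beta(\Gamma)$ and $\phi(\Gamma)$ plus the Hausdorff property of the targets --- which is exactly your opening paragraph. Your sketch is therefore a genuine reproof, and it is sound. Your Schlichting half is in content the same as what the paper later establishes, with proof, as Proposition~\ref{prop:schlichting_from_compl}: that $\check\phi$ is the projection $G\onto G/\operatorname{Core}_G(L)\cong\Gamma\slcpt\Lambda$. The paper obtains this more indirectly via the Shalom--Willis isomorphism $\Gamma\slcpt\Lambda\cong G\slcpt L$, whereas you argue closedness of the image of $G/N$ in $\Sym(\Gamma/\Lambda)$ directly; both are valid. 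Two compressed steps in your closedness/topology argument would need a sentence each if written out: (i) the ``residual element'' lands in $L/N$ not just because $L/N$ is closed, but because the base-coset stabiliser $S$ in $\Sym(\Gamma/\Lambda)$ is \emph{open}, so a limit point of the image lying in $S$ is a limit of elements of $(\text{image})\cap S=L/N$; (ii) the quotient topology on $G/N$ agrees with the subspace topology on the image because $L/N$ maps by a compact-to-Hausdorff continuous bijection (hence homeomorphism) onto the open stabiliser subgroup of $\Gamma\slcpt\Lambda$, which makes the continuous bijective homomorphism $G/N\to\text{image}$ open. The Belyaev half is, as you honestly flag, imported wholesale from \cite{RW19}, so there it is not really an alternative to the paper's citation but a re-derivation of some of its consequences; those verifications (compact kernel via $\hat\phi^{-1}(L)=\hat\Lambda$, surjectivity via the image being a dense open subgroup, openness via the compact-open homeomorphism $\hat\Lambda\onto L$) are all correct.
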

The fact that all TDLC~completions of~$(\Gamma,\Lambda)$ differ only by a compact normal subgroup implies that the compactness properties of the TDLC completions depend only on the pair $(\Gamma,\Lambda)$ \cite[Theorem~3.20]{CCC20}. In Proposition~\ref{prop:completions_have_same_Sigma}, we will see that a similar statement holds, more generally, for $\Sigma$-sets.

\subsubsection{The Schlichting completion}\label{sec:schlichting}

Here we explain a construction of the Schlichting completion~$\Gamma \slcpt \Lambda$ of a Hecke pair $(\Gamma, \Lambda)$, which was first introduced by Tzanev~\cite{T03} following an idea of Schlichting~\cite{S80}. 

The action of $\Gamma$ on the left coset space $\Gamma/\Lambda$ defines the continuous homomorphism $$\alpha\colon \Gamma\to\mathrm{Sym}(\Gamma/\Lambda),$$
where $\mathrm{Sym}(\Gamma/\Lambda)$ carries the topology of pointwise convergence. As $\Lambda$ is open in~$\Gamma$, the space $\Gamma /\Lambda$ is discrete, so this is also the compact-open topology.

\begin{dfn}
The  {\bf Schlichting completion} $\Gamma\slcpt \Lambda$ of~$(\Gamma,\Lambda)$ is the closure~$\overline{\alpha(\Gamma)}$ in $\mathrm{Sym}(\Gamma/\Lambda)$, together with the continuous homomorphism $\alpha\colon \Gamma\to\Gamma\slcpt\Lambda$. 
\end{dfn}

Evaluation at~$\Lambda$ defines a function $\ev\colon \Gamma\slcpt\Lambda\ \to \Gamma / \Lambda$ to the space of left cosets. Since the topology on $\Gamma \slcpt \Lambda$~is induced from the compact-open topology on~$\Sym(\Gamma/\Lambda)$, it follows that $\ev$~is continuous (in fact, as $\Gamma / \Lambda$~is discrete, $\ev$~is locally constant). We thus have a commutative triangle of topological $\Gamma$-spaces
$$\begin{tikzcd}
    \Gamma \ar[d,"p"']\ar[r, "\alpha"]&\Gamma\slcpt\Lambda\ar[dl,"\ev"]\\
    \Gamma / \Lambda
\end{tikzcd},$$
where $p$~is the canonical projection.

Given any TLDC completion, one can give a convenient description of the Shclichting completion using the following criterion:

\begin{prop}[Schlichting completion from any TDLC completion]\label{prop:schlichting_from_compl}
    Let $\phi \colon \Gamma \to G$ be a TDLC completion of the Hecke pair~$(\Gamma, \Lambda)$, with $L:=\overline{\phi(\Lambda)}$.
    
    Then the kernel of the quotient map $\check \phi \colon G \to \Gamma \slcpt \Lambda$ (as in Theorem~\ref{thm:compl}) is the normal core $N:=\operatorname{Core}_G(L)=\bigcap_{g\in G} gLg^{-1}$. In particular, $\Gamma\slcpt \Lambda \cong G / N$.
\end{prop}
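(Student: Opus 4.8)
The plan is to identify $\ker\check\phi$ with the kernel of an action of~$G$ on the discrete set~$\Gamma/\Lambda$ which, after a $\phi$-equivariant identification $\Gamma/\Lambda\cong G/L$, becomes the left-translation action of~$G$ on~$G/L$, whose kernel is by definition $\bigcap_{g\in G}gLg^{-1}=N$. As a preliminary, I would note that $L=\overline{\phi(\Lambda)}$ is precisely the compact open subgroup $L_0\subseteq G$ with $\phi^{-1}(L_0)=\Lambda$ supplied by the definition of a TDLC completion: since $\phi(\Gamma)$~is dense and $L_0$~is clopen, $\phi(\Lambda)=\phi(\Gamma)\cap L_0$ is dense in~$L_0$, so $L=\overline{\phi(\Lambda)}=L_0$ and in particular $\phi^{-1}(L)=\Lambda$. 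Hence $\phi$ descends to a bijection $\bar\phi\colon\Gamma/\Lambda\to G/L$, $\gamma\Lambda\mapsto\phi(\gamma)L$: it is injective because $\phi^{-1}(L)=\Lambda$, and surjective because $\phi(\Gamma)\,L=G$ (this set is a union of cosets of the open subgroup~$L$, hence clopen, and it contains the dense set $\phi(\Gamma)$). By construction $\bar\phi$ is $\Gamma$-equivariant, where $\Gamma$ acts on $G/L$ through~$\phi$.

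Next, recall that $\Gamma\slcpt\Lambda$ is a subgroup of $\Sym(\Gamma/\Lambda)$ acting faithfully on~$\Gamma/\Lambda$, with $\alpha\colon\Gamma\to\Gamma\slcpt\Lambda$ induced by the left-translation action of~$\Gamma$. Composing with~$\check\phi$ gives a continuous action of~$G$ on the discrete set~$\Gamma/\Lambda$, and by faithfulness $\ker\check\phi$ equals the kernel of this $G$-action. The key step is to check that $\bar\phi$ intertwines it with the left-translation action of~$G$ on~$G/L$: for each fixed coset~$\gamma\Lambda$, the two maps $G\to G/L$ given by $g\mapsto\bar\phi\bigl(\check\phi(g)(\gamma\Lambda)\bigr)$ and $g\mapsto g\phi(\gamma)L$ are continuous (the target is discrete) and, since $\check\phi\circ\phi=\alpha$, agree on the dense subset $\phi(\Gamma)\subseteq G$; hence they agree everywhere. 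It follows that, for $g\in G$, we have $g\in\ker\check\phi$ if and only if $\check\phi(g)$ fixes every coset in~$\Gamma/\Lambda$, if and only if (transporting along~$\bar\phi$) $g$ fixes every coset in~$G/L$, if and only if $g\in\bigcap_{h\in G}hLh^{-1}=N$. Thus $\ker\check\phi=N$, and the final clause follows at once: $\check\phi$~is a quotient map by Theorem~\ref{thm:compl}, so $\Gamma\slcpt\Lambda\cong G/\ker\check\phi=G/N$.

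I expect the main obstacle to be the equivariance verification in the second paragraph --- understanding how an arbitrary $g\in G$, not just one in~$\phi(\Gamma)$, acts on $\Gamma/\Lambda$ through the non-injective map~$\check\phi$. The point is that one should not try to approximate $g$ by elements of~$\phi(\Gamma)$ in a way compatible with~$\check\phi$, but rather run the density argument directly on the two continuous maps $G\to G/L$ above, which only uses $\alpha=\check\phi\circ\phi$ and the density of~$\phi(\Gamma)$. A minor auxiliary fact worth isolating is that open subgroups of~$G$ are determined by their $\phi$-preimages (again by density of~$\phi(\Gamma)$), which is what pins down $L=\overline{\phi(\Lambda)}=L_0$; the same fact also powers an alternative two-inclusion proof, in which one shows $\check\phi^{-1}\bigl(\operatorname{Stab}_{\Gamma\slcpt\Lambda}(\Lambda)\bigr)$ is an open subgroup of~$G$ with $\phi$-preimage $\Lambda=\phi^{-1}(L)$, hence equal to~$L$ --- giving $\ker\check\phi\subseteq L$ and so $\ker\check\phi\subseteq N$ by normality --- while conversely $\check\phi(N)$ is a normal subgroup of~$\Gamma\slcpt\Lambda$ lying inside $\operatorname{Stab}_{\Gamma\slcpt\Lambda}(\Lambda)$, whose normal core in~$\Gamma\slcpt\Lambda$ is trivial since $\Gamma\slcpt\Lambda\subseteq\Sym(\Gamma/\Lambda)$ acts faithfully and transitively.
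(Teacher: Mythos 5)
Your proof is correct, and it takes a genuinely more direct route than the paper's. The paper factors through the Shalom--Willis isomorphism $\Gamma\slcpt\Lambda\cong G\slcpt L$ (whose compatibility with the structure maps it must inspect in the cited proof) and then constructs $G/N$ as an intermediate TDLC completion of $(G,L)$, identifying it with $G\slcpt L$ by observing that $N=\ker(\alpha\colon G\to G\slcpt L)$. You instead compute $\ker\check\phi$ head-on: after pinning down $L=\overline{\phi(\Lambda)}$ as the compact open subgroup from the definition, you exhibit the $\phi$-induced bijection $\bar\phi\colon\Gamma/\Lambda\to G/L$, show by a continuity-plus-density argument that it intertwines the $\check\phi$-induced $G$-action on $\Gamma/\Lambda$ with left translation on $G/L$, and read off that the kernel is $\bigcap_{g\in G}gLg^{-1}=N$ from faithfulness of $\Gamma\slcpt\Lambda\subseteq\Sym(\Gamma/\Lambda)$. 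The underlying fact --- that the kernel of the $G$-action on $G/L$ is the normal core --- is common to both arguments, but your treatment is self-contained where the paper delegates to a reference, and it makes explicit the continuity argument needed to extend the cosetwise action from the dense subgroup $\phi(\Gamma)$ to all of $G$ (the point you flagged as the main subtlety). The alternative two-inclusion argument you sketch at the end is also sound, since $\operatorname{Core}_{\Gamma\slcpt\Lambda}(\operatorname{Stab}(\Lambda))$ is trivial by faithfulness.
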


\begin{proof}
The proof is contained in the commutative diagram
\[\begin{tikzcd}
    \Gamma \ar[d,"\alpha"'] \ar[r, "\phi"] & G \ar[d,"\alpha"] \ar[dr,->>,"q"]  \ar[dl, "\check \phi"', ->>]\\
    \Gamma \slcpt \Lambda\ar[r,"\cong"']&G\slcpt L&G / N \ar[l,->>,"\check q"]
\end{tikzcd},\]
which we now explain.

By definition of~$\check \phi$, the upper left triangle commutes.
The isomorphism on the bottom left is given by a result of Shalom and Willis \cite[Lemma~3.6]{SW13}\footnote{The given reference only states the existence of this isomorphism for $\Gamma$~discrete. But this assumption is not used in the proof.}. Inspecting their proof reveals that this isomorphism is induced by~$\phi$, in the sense that a permutation in $\Gamma\slcpt\Lambda$ given by left multiplication with an element $g \in \Gamma$ is mapped to the permutation in~$G\slcpt L$ given by left multiplication with~$\phi(g)$.
In other words, the square on the left is commutative. But by uniqueness of~$\check \phi$, the composition $G\xrightarrow{\alpha} G\slcpt L \xleftarrow{\cong} \Gamma\slcpt\Lambda$ equals~$\check\phi$, so the bottom left triangle also commutes.

We now turn our eyes to the right side of the diagram. For the canonical projection $q\colon G\to G/N$, the image~$q(L)$ is compact and open (as $q$~is an open continuous map), and we have $q^{-1}(q(L)) = LN =L$. Thus $q$~is a TDLC completion of~$(G,L)$, which yields the induced quotient map~$\check q$.
A straightforward verification shows that $N$~is the kernel of $\alpha\colon G \to G\slcpt L$, and so $\check q$~is injective, and thus an isomorphism.

The commutative triangle formed by $G, \Gamma\slcpt \Lambda$ and~$G/N$ establishes the statment.
\end{proof}

\section{Almost commutativity with compacts}\label{sec:almost_comm}

In this section, we introduce and discuss the key property enjoyed by open commensurated subgroups $\Lambda \subseteq \Gamma$ that allows us to sidestep the assumption of normality in earlier results \cite[Theorem~8.2]{BHQ24a} \cite[Theorem~J]{BHQ24b} analogous to our Theorem~\ref{thm:main_intro}.

\begin{dfn}\label{dfn:almost_comm}
    A closed subgroup $\Lambda$ of a locally compact Hausdorff group~$\Gamma$ is said to {\bf almost commute with compacts} if for every compact $C\subseteq \Gamma$, there is a compact $C'\subseteq \Gamma$ such that $C\Lambda \subseteq \Lambda C'$ and $\Lambda C\subseteq C'\Lambda$. 
\end{dfn}

\begin{rem}[Left vs. right commutativity]
    To check that $\Lambda$~almost commutes with compacts, it suffices to show that for every compact~$C$, there is a compact~$C'$ with $C\Lambda \subseteq \Lambda C'$. This property applied to $C^{-1}$ yields $C''$ with $\Lambda C \subseteq (C'')^{-1}\Lambda$, and then the union $C'\cup (C'')^{-1}$ is as required by the definition.
\end{rem}

If $\Lambda\subseteq \Gamma$ is a closed normal subgroup, then it is obvious that $\Lambda$~almost commutes with compacts. Another example, which is the most relevant for this article, is the case where $\Lambda$~is an open commensurated subgroup:

\begin{prop}[Almost commutativity of compact open subgroups]\label{prop:comm_exchange}
  For every Hecke pair $(\Gamma, \Lambda)$, the subgroup $\Lambda$~almost commutes with compacts.
\end{prop}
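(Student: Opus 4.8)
The plan is to reduce the claim to a purely combinatorial statement about the discrete coset space $\Gamma/\Lambda$, exploiting that $\Lambda$~is open (so $\Gamma/\Lambda$~is discrete) and commensurated. By the Remark on left vs.\ right commutativity, it suffices to show that for every compact $C\subseteq\Gamma$ there is a compact $C'\subseteq\Gamma$ with $C\Lambda\subseteq\Lambda C'$. First I would observe that since $\Lambda$~is open, any compact $C$~is covered by finitely many left cosets $\lambda_1\Lambda,\dots,\lambda_k\Lambda$ (equivalently, by finitely many right cosets); so it is enough to treat the case $C=\{\gamma\}$ a single element, and then take for general~$C$ the union of the finitely many resulting finite sets. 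Thus the whole statement collapses to: for each $\gamma\in\Gamma$, the set $\gamma\Lambda$ is contained in $\Lambda F$ for some \emph{finite} $F\subseteq\Gamma$.

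The key step is then exactly where commensuration enters. For a fixed $\gamma$, write $\gamma\Lambda = \Lambda\gamma\,(\gamma^{-1}\Lambda\gamma)$. Since $\Lambda$~is commensurated, $\Lambda\cap\gamma^{-1}\Lambda\gamma$ has finite index in $\gamma^{-1}\Lambda\gamma$; choosing coset representatives $t_1,\dots,t_r\in\gamma^{-1}\Lambda\gamma$ for $(\Lambda\cap\gamma^{-1}\Lambda\gamma)$ in $\gamma^{-1}\Lambda\gamma$, we get
\[
\gamma^{-1}\Lambda\gamma = \bigcup_{j=1}^r (\Lambda\cap\gamma^{-1}\Lambda\gamma)\,t_j \subseteq \Lambda\,\{t_1,\dots,t_r\},
\]
hence
\[
\gamma\Lambda = \Lambda\gamma\,(\gamma^{-1}\Lambda\gamma) \subseteq \Lambda\gamma\Lambda\{t_1,\dots,t_r\}.
\]
This is not quite of the form $\Lambda F$ yet because of the middle factor $\gamma\Lambda$, so instead I would run the computation in the cleaner order: note $\gamma\Lambda\gamma^{-1}$ is commensurated-equivalent to~$\Lambda$, pick representatives $s_1,\dots,s_m$ for $\gamma\Lambda\gamma^{-1}/(\Lambda\cap\gamma\Lambda\gamma^{-1})$ inside $\gamma\Lambda\gamma^{-1}$, so that $\gamma\Lambda\gamma^{-1}\subseteq\{s_1,\dots,s_m\}\Lambda$; multiplying on the right by $\gamma$ gives $\gamma\Lambda\subseteq\{s_1,\dots,s_m\}\Lambda\gamma = \{s_1,\dots,s_m\}\gamma\,(\gamma^{-1}\Lambda\gamma)$. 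This still leaves a $\gamma^{-1}\Lambda\gamma$ on the right. The honest fix is to absorb it: since we only need an inclusion $C\Lambda\subseteq\Lambda C'$, I would instead directly use $\gamma\Lambda\gamma^{-1}\subseteq\{s_1,\dots,s_m\}\Lambda$ to write $\gamma\Lambda = (\gamma\Lambda\gamma^{-1})\gamma \subseteq \{s_1,\dots,s_m\}\Lambda\gamma$, and then observe $\Lambda\gamma\subseteq\Lambda\gamma$ trivially places the single element $\gamma$ on the far right; that is, $\gamma\Lambda\subseteq\Lambda'\{s_1\gamma,\dots,s_m\gamma\}$ where $\Lambda' = \Lambda$ — wait, the $s_i$ are on the left of $\Lambda$. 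The clean statement is: $\gamma\Lambda = (\gamma\Lambda\gamma^{-1})\gamma\subseteq(\{s_1,\dots,s_m\}\Lambda)\gamma$, and since each $s_i\in\gamma\Lambda\gamma^{-1}\subseteq\Gamma$ is a fixed element, $\{s_1,\dots,s_m\}\Lambda\gamma = \{s_1\}\Lambda\gamma\cup\dots$, which is contained in $\Lambda\{\,\lambda^{-1}s_i\lambda'\gamma : \dots\}$ — this is getting circular, and the genuinely correct move is simply: commensuration says $\Lambda\gamma\Lambda$ is a \emph{finite} union of right cosets $\Lambda\gamma_1,\dots,\Lambda\gamma_N$ (double cosets of commensurated subgroups are finite unions of cosets on each side), so $\gamma\Lambda\subseteq\Lambda\gamma\Lambda = \bigcup_{i=1}^N\Lambda\gamma_i = \Lambda\{\gamma_1,\dots,\gamma_N\}$, and $F:=\{\gamma_1,\dots,\gamma_N\}$ is the desired finite set.

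So the streamlined argument I would actually write is: (i) reduce to $C=\{\gamma\}$ using openness of $\Lambda$ (finitely many cosets cover a compact set); (ii) recall the standard fact that, because $\Lambda$~is commensurated, every double coset $\Lambda\gamma\Lambda$ decomposes as a finite union of left cosets $\gamma_i\Lambda$ and also as a finite union of right cosets $\Lambda\gamma_i'$ — this follows from $[\Lambda:\Lambda\cap\gamma\Lambda\gamma^{-1}]<\infty$; (iii) conclude $\gamma\Lambda\subseteq\Lambda\gamma\Lambda=\Lambda\{\gamma_1',\dots,\gamma_N'\}$, so $C'=\{\gamma_1',\dots,\gamma_N'\}$ works for $C=\{\gamma\}$; (iv) for general compact $C$, cover it by finitely many $\gamma^{(1)}\Lambda,\dots,\gamma^{(k)}\Lambda$, apply (iii) to each $\gamma^{(\ell)}$, and take $C'$ to be the union of the resulting finite sets — note $C\Lambda\subseteq\bigcup_\ell\gamma^{(\ell)}\Lambda\Lambda=\bigcup_\ell\gamma^{(\ell)}\Lambda\subseteq\Lambda C'$. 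The main obstacle is essentially bookkeeping: making sure the finiteness in the double-coset decomposition is correctly invoked (it is the commensuration hypothesis, used through the finite index $[\Lambda:\Lambda\cap\gamma\Lambda\gamma^{-1}]$), and that the reduction from arbitrary compact $C$ to finitely many cosets genuinely uses that $\Lambda$~is open — a compact subset of a topological group need not meet only finitely many cosets of a general closed subgroup, so openness is essential here and should be stated explicitly.
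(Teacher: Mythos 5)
Your streamlined argument in the final paragraph is correct and follows essentially the same route as the paper's proof: use openness of $\Lambda$ to reduce a compact $C$ to finitely many cosets, then use commensuration to absorb each $\gamma\Lambda$ into $\Lambda F$ for a finite $F$ (the paper does this via right-coset representatives of $\Lambda\cap g^{-1}\Lambda g$ in $\Lambda$, while you invoke the finite right-coset decomposition of the double coset $\Lambda\gamma\Lambda$ — the same computation). One small bookkeeping slip worth fixing: the number of \emph{right} cosets $\Lambda\gamma_i'$ in $\Lambda\gamma\Lambda$ equals $[\Lambda:\Lambda\cap\gamma^{-1}\Lambda\gamma]$, not $[\Lambda:\Lambda\cap\gamma\Lambda\gamma^{-1}]$ (the latter counts \emph{left} cosets); both are finite by commensuration, so the argument is unaffected.
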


\begin{proof} We will prove that, in fact, for every $C\in \C(G)$ there exists a finite $F\subseteq G$ satisfying $C\Lambda\subseteq \Lambda F$. 

    Since $\Lambda$ is open and $C$~is compact, we see that $C$~is contained in the union of finitely many left $\Lambda$-cosets, that is, there is a finite subset $C_0\subseteq C$ such that $C\subseteq C_0\Lambda$, and hence $C\Lambda = C_0\Lambda$. We may therefore assume from now on that $C$~is finite.

    For each $g\in C$, choose a (finite) set $F_g$ of right coset representatives of $\Lambda \cap g^{-1} \Lambda g$ in~$\Lambda$, so $$\Lambda = (\Lambda \cap g^{-1} \Lambda g)F_g \subseteq g^{-1} \Lambda gF_g.$$
    Thus, we have $g\Lambda \subseteq \Lambda gF_g$, and defining $F:= \bigcup_{g\in C} g F_g$ yields
    $$C\Lambda= \bigcup_{g\in C}g\Lambda \subseteq \bigcup_{g\in C} \Lambda g F_g = \Lambda \bigcup_{g\in C}  g F_g = \Lambda F.\qedhere$$
\end{proof}

It is unclear whether it is possible to relax the hypothesis on~$\Lambda$ in  Proposition~\ref{prop:comm_exchange} from ``open commensurated'' to ``closed commensurated''.

\begin{ques}\label{question}
Let $\Gamma$ be a locally compact Hausdorff group.
Does every closed commensurated subgroup of~$\Gamma$ almost commute with compacts?
\end{ques}

As communicated to us by George Willis, the answer is affirmative in the case where $\Gamma$~is a TDLC~group. The following proof was kindly supplied by him \cite{Wil25}.
\begin{prop}[Almost commutativity in TDLC groups]\label{prop:Willis} 
Let $\Lambda$ be a closed commensurated subgroup of a TDLC group~$\Gamma$. Then $\Lambda$ almost  commutes with compacts.
\end{prop}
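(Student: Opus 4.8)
The plan is to exploit the structure theory of TDLC groups, specifically the fact that a TDLC group has a basis of identity neighbourhoods consisting of compact open subgroups (van Dantzig's theorem), in order to reduce the statement about a general compact set $C$ to a statement about compact open subgroups of~$\Gamma$. Concretely, fix a compact $C\subseteq\Gamma$. By van Dantzig's theorem, pick a compact open subgroup $U\subseteq\Gamma$. Since $C$ is compact and $U$ is open, $C$ is covered by finitely many left cosets $g_1U,\ldots,g_kU$, so $C\subseteq FU$ with $F=\{g_1,\ldots,g_k\}$ finite, and therefore $C\Lambda\subseteq FU\Lambda$. Thus it suffices to handle $U\Lambda$ for a single compact open subgroup $U$ (the finite set $F$ costs us nothing, as in the remark on left vs.\ right commutativity and as in the proof of Proposition~\ref{prop:comm_exchange}).

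The heart of the matter is then to produce, for a compact open subgroup $U\subseteq\Gamma$, a compact $C'$ with $U\Lambda\subseteq\Lambda C'$. First I would observe that $U\Lambda$ is a union of left $\Lambda$-cosets, so it is enough to bound the set of cosets $\{\lambda^{-1}u\Lambda : \lambda\in\Lambda,\ u\in U\}$, equivalently the image of $U$ in the double coset space $\Lambda\backslash\Gamma/\Lambda$ restricted to what is reachable --- but more usefully, I would look at the subgroup $V:=U\cap\Lambda$, which is compact and open (being the intersection of two open subgroups, one of which is compact). Then $U$ is a union of finitely many cosets of $V$, say $U=\bigcup_{i=1}^m u_iV$ with $u_i\in U$, so $U\Lambda=\bigcup_i u_iV\Lambda=\bigcup_i u_i\Lambda$ (since $V\subseteq\Lambda$). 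Hence $U\Lambda$ is already a \emph{finite} union of left $\Lambda$-cosets, and the same reduction as in Proposition~\ref{prop:comm_exchange} applies: for each $u_i$, using that $\Lambda$ is commensurated, $\Lambda\cap u_i^{-1}\Lambda u_i$ has finite index in $\Lambda$, pick finite right-coset representatives $F_i$, so $u_i\Lambda\subseteq\Lambda u_iF_i$, and then $C':=\overline{\bigcup_i u_iF_i}$ (a finite, hence compact, set) works.

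The main obstacle I anticipate is establishing that $U$ is a finite union of cosets of $V=U\cap\Lambda$ --- i.e.\ that $V$ has finite index in $U$. This is exactly where the hypothesis that $\Gamma$ is TDLC (rather than merely locally compact Hausdorff) is being used: $U$ is compact and $V$ is open in $U$, so $V$ has finite index in $U$ by compactness, just as in part~(2) of the lemma on examples of commensurated subgroups. Once this is in hand, the remaining argument is the finite-set manipulation already carried out in Proposition~\ref{prop:comm_exchange}, and the passage from general compact $C$ to a compact open subgroup $U$ via van Dantzig. I would also note that the whole argument in fact produces a \emph{finite} set $F$ with $C\Lambda\subseteq\Lambda F$, paralleling the conclusion of Proposition~\ref{prop:comm_exchange}; the only genuinely new input beyond that proof is van Dantzig's theorem, which is what lets us replace ``$\Lambda$ open'' by ``$\Gamma$ TDLC''. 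If one wanted to attack Question~\ref{question} in full generality, the sticking point would be precisely the absence of compact open subgroups, so that one cannot reduce to the finite-index situation; that reduction is the crux of the TDLC case.
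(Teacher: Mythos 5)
There is a genuine gap, and it is fatal. Your argument hinges on the claim that $V := U\cap\Lambda$ is open in $U$, ``being the intersection of two open subgroups, one of which is compact'' --- but $\Lambda$ is only assumed to be \emph{closed}, not open. If $\Lambda$ were open, we would be back in the setting of Proposition~\ref{prop:comm_exchange}, which already handles open commensurated subgroups in any locally compact Hausdorff group; the entire point of Proposition~\ref{prop:Willis} is to treat closed commensurated subgroups that need not be open. When $\Lambda$ is closed but not open, $V=U\cap\Lambda$ is a compact but not open subgroup of~$U$, and it can have infinite index in~$U$. For a concrete illustration (with $\Lambda$ normal, so almost commutativity is trivial but your intermediate claim still fails), take $\Gamma=\QQ_p^2$, $\Lambda=\QQ_p\times\{0\}$, $U=\ZZ_p^2$: then $U\cap\Lambda=\ZZ_p\times\{0\}$ has infinite index in~$U$. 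Consequently you cannot write $U\Lambda$ as a finite union of left $\Lambda$-cosets, and the reduction to the finite-set manipulation of Proposition~\ref{prop:comm_exchange} breaks down.

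Your intuition that van Dantzig's theorem is the new ingredient the TDLC hypothesis supplies is correct in spirit, but it does not by itself bridge the gap. The paper's proof has to work considerably harder: it defines $\nu(x)=[x\Lambda x^{-1}:\Lambda\cap x\Lambda x^{-1}]$, proves that $\nu$ is lower semicontinuous, and uses the Baire Category Theorem to find a compact open subgroup~$U$ (and a point~$x$) on whose coset $xU$ the index~$\nu$ is \emph{bounded}. From boundedness of $\{[u\Lambda u^{-1}:\Lambda\cap u\Lambda u^{-1}]\mid u\in U\}$, the Bergman--Lenstra theorem then produces a subgroup $\Lambda'$ commensurable with~$\Lambda$ and \emph{normalised} by~$U$, and only after passing through~$\Lambda'$ does one obtain the compact~$K$ with $U\Lambda\subseteq\Lambda K$. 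The first paragraph of your sketch (reducing a general compact~$C$ to the case of a single compact open~$U$ via a finite cover $C\subseteq FU$, then absorbing the finite set~$F$ as in Proposition~\ref{prop:comm_exchange}) does match the opening of the paper's proof; but the core step --- handling $U\Lambda$ for $U$ compact open and $\Lambda$ merely closed --- requires the Baire/Bergman--Lenstra machinery, not the finite-index shortcut you propose.
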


\begin{proof}
By the argument used in the proof of Proposition~\ref{prop:comm_exchange}, $\Lambda$ almost commutes with finite sets (that is, for every finite set $F\subseteq \Gamma$ there is a finite set $F'\subseteq\Gamma$ such that $F\Lambda\subseteq \Lambda F'$ and $\Lambda F\subseteq F'\Lambda$). Suppose that $C\subseteq\Gamma$ is compact and that $U$ is a compact open subgroup of $\Gamma$. Then $C\subseteq UF$ with $F$ a finite set and $C\Lambda\subseteq U\Lambda F'$ with $F'$~a finite set. Hence, if $U\Lambda\subseteq \Lambda K$ with $K$ compact, then $C\Lambda\subseteq \Lambda(KF')$ with $KF'$ compact and the claim holds for~$C$. We thus set out to find such $U$ and~$K$.

 Define the function
\begin{align*}
    \nu \colon \Gamma &\to \NN\\
    x & \mapsto [x\Lambda x^{-1} : \Lambda \cap x\Lambda x^{-1}].
\end{align*}
It is shown next that $\nu$~is lower semicontinuous. Since $\NN$~is discrete, this is equivalent to the statement that every $x\in \Gamma$ has a neighbourhood $\mathcal{U}\ni x$ such that for all $y\in\mathcal{U}$ we have $\nu(y)\geq \nu(x)$. For this, first note that, as $\Lambda$~is commensurated, there are $k_1,\cdots,k_n\in x\Lambda x^{-1}$ with $$x\Lambda x^{-1}\subseteq \bigcup_{\ell=1}^n \Lambda k_{\ell},$$
and we may assume all cosets $\Lambda k_\ell$ are distinct.
Set $b_{\ell} = x^{-1}k_\ell x\in \Lambda$ for each~$\ell$, and choose a compact open subgroup $V\subseteq \Gamma$ such that the double cosets $\Lambda k_\ell V$ are distinct open neighbourhoods of~$k_\ell$, $\ell=1,\dots,n$ (such~$V$ exists because each coset $\Lambda k_\ell$ is closed and equal to the set $\bigcap\left\{ \Lambda k_\ell V\mid V\leq \Gamma\text{ is compact and open}\right\}$).
Then there is a neighbourhood $\mathcal{U}\ni x$ such that for every $\ell$ and every $y\in \mathcal U$, we have $yb_\ell  y^{-1}\in \Lambda(xb_\ell x^{-1})V = \Lambda k_\ell V$. Hence $\Lambda(yb_\ell  y^{-1})V$ are distinct double cosets for each $y\in\mathcal{U}$ and, in particular, $\Lambda(yb_\ell y^{-1})$ are distinct $\Lambda$-cosets intersecting $y\Lambda y^{-1}$ at $yb_\ell y^{-1}$. 
Therefore $\nu(y)\geq\nu(x)$ for every $y\in\mathcal{U}$ as claimed.

The preceding argument implies that
$$
A_n := \left\{x\in \Gamma\mid \nu(x)<n\right\}
$$
is a closed subset of $\Gamma$ for each $n$. We also have that $\Gamma = \bigcup_{n\geq1} A_n$ because $\Lambda$ is commensurated by $\Gamma$. Hence, by the Baire Category Theorem, there is~$n$ such that $A_n$ has non-empty interior. Choose $x\in \Gamma$ and a compact open subgroup $U\subseteq \Gamma$ such that $xU\subseteq A_n$. Let $g_1, \ldots,g_{\nu(x^{-1})}\in x^{-1}\Lambda x$ be such that $x^{-1}\Lambda x \subseteq \bigcup_{i=1}^{\nu(x^{-1})} \Lambda g_i$.
Moreover, since for every $u\in U$ we have $\nu(xu) = [u\Lambda u^{-1} : x^{-1} \Lambda x \cap u\Lambda u^{-1}]$, there are $h_1, \ldots,h_{\nu(xu)}\in u\Lambda u^{-1}$ (depending on~$u$) that satisfy
\[u\Lambda u^{-1} \subseteq \bigcup_{j=1}^{\nu(xu)} (x^{-1}\Lambda x) h_j \subseteq  \bigcup_{j=1}^{\nu(xu)} \bigcup_{i=1}^{\nu(x^{-1})}\Lambda g_i h_j.\]
Therefore, $\nu(u) \le \nu(xu)\nu(x^{-1}) < n \nu(x^{-1})$.

Since $\left\{[u\Lambda u^{-1} : \Lambda\cap u\Lambda u^{-1}]\mid u\in U\right\}$ is bounded, the Bergman-Lenstra Theorem \cite[Theorem 6(iii)]{BLW89} shows that there is  a subgroup $\Lambda'\subseteq \Gamma$ commensurable with $\Lambda$ and normalised by~$U$.
In particular, there are $l_1,\ldots,l_m\in \Lambda$ and $l_1',\ldots,l_{m'}'\in \Lambda'$ such that
$$
\Lambda\subseteq \bigcup_{i=1}^m \Lambda'l_i \text{ and }\Lambda'\subseteq \bigcup_{j=1}^{m'} \Lambda l'_j.
$$

Then 
$$
U\Lambda \subseteq U\bigcup_{i=1}^m \Lambda'l_i = \bigcup_{i=1}^m \Lambda'U l_i \subseteq \bigcup_{i=1}^m \bigcup_{j=1}^{m'}\Lambda l'_jU l_i=\Lambda K
$$
with $K = \bigcup_{i=1}^m \bigcup_{j=1}^{m'} l'_jU l_i$ compact, as required.
\end{proof}

\section{Character spaces of  TDLC completions}\label{sec:char_space}

For this section, fix a Hecke pair~$(\Gamma, \Lambda)$ and let $\phi\colon \Gamma \to G$ be a TDLC completion of~$(\Gamma, \Lambda)$ with $L:= \overline{\phi(\Lambda)}$. We notate the Schlichting  completion as $\alpha \colon \Gamma \to \Gamma\slcpt\Lambda=: \G$, and write $\L := \overline {\alpha(\Lambda)}$.
Finally, fix a locally compact Hausdorff group~$H$. We denote by $\Homtop(\Gamma,H)_\Lambda$ the set of continuous homomorphisms $\Gamma \to H$ whose kernel contains~$\Lambda$ (and similarly with other Hecke pairs in place of $(\Gamma, \Lambda)$).

\begin{prop}[Maps out of TDLC completions]\label{prop:maps_out_of_completion}
    Precomposition with~$\phi$ induces a bijection
    $$\phi^*\colon \Homtop(G, H)_L \xrightarrow{\cong} \Homtop(\Gamma, H)_\Lambda.$$
\end{prop}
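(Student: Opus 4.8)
The plan is to show that $\phi^*$ is well-defined, injective, and surjective, exploiting the two defining features of a TDLC completion: the image $\phi(\Gamma)$ is dense in $G$, and $\Lambda = \phi^{-1}(L)$ for the compact open subgroup $L = \overline{\phi(\Lambda)}$.

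First I would check well-definedness: if $\psi \in \Homtop(G,H)_L$, then $\psi \circ \phi$ is continuous as a composite of continuous maps, and it kills $\Lambda$ because $\phi(\Lambda) \subseteq L = \ker(\psi) \cap (\text{stuff})$ — more precisely $\Lambda = \phi^{-1}(L)$ gives $\phi(\Lambda) \subseteq L \subseteq \ker \psi$, so $\psi \circ \phi \in \Homtop(\Gamma,H)_\Lambda$. Injectivity is where density enters: if $\psi_1 \circ \phi = \psi_2 \circ \phi$, then $\psi_1$ and $\psi_2$ agree on the dense subset $\phi(\Gamma)$; since $H$ is Hausdorff and $\psi_1, \psi_2$ are continuous, the equalizer $\{g \in G \mid \psi_1(g) = \psi_2(g)\}$ is closed and contains a dense set, hence equals $G$, so $\psi_1 = \psi_2$.

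The main work is surjectivity. Given $\chi \in \Homtop(\Gamma, H)_\Lambda$, I want to produce $\psi \colon G \to H$ with $\psi \circ \phi = \chi$. The idea is to define $\psi$ on the dense subgroup $\phi(\Gamma)$ by $\psi(\phi(\gamma)) := \chi(\gamma)$ — this is well-defined as a map on $\phi(\Gamma)$ precisely because $\ker \phi \subseteq \phi^{-1}(L) = \Lambda \subseteq \ker \chi$, so $\chi$ factors through $\phi(\Gamma) \cong \Gamma / \ker\phi$. It is clearly a homomorphism on $\phi(\Gamma)$. The hard part — and the crux of the whole proposition — is extending this homomorphism continuously from the dense subgroup $\phi(\Gamma)$ to all of $G$; a continuous homomorphism on a dense subgroup of a topological group need not extend continuously in general. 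Here the extension works because of compactness of $L$: the subgroup $\chi(\Lambda \cdot \text{(nbhd)})$... more carefully, I would argue as follows. Since $\Lambda = \phi^{-1}(L)$ is open in $\Gamma$ and $\chi|_\Lambda = 0$, the homomorphism $\psi$ on $\phi(\Gamma)$ is trivial on the subgroup $\phi(\Lambda)$, which is dense in the compact open subgroup $L$; I would show $\psi$ is in fact continuous at the identity of $G$ by checking it is trivial on $L$: for $g \in \phi(\Gamma) \cap L = \phi(\Lambda)$ (using $\phi^{-1}(L) = \Lambda$) we have $\psi(g) = 0$, and then density of $\phi(\Lambda)$ in $L$ together with the fact that $\psi$ restricted to the subgroup generated by $L$... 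Actually the cleanest route: continuity of a homomorphism between topological groups is equivalent to continuity at the identity, and for that it suffices that preimages of a neighbourhood basis of $1_H$ are neighbourhoods of $1_G$. I would instead invoke a standard extension principle — a continuous homomorphism from a dense subgroup $D$ of a topological group $G$ to a \emph{complete} group (or, as here, because the relevant quotient $G/N$ involved is handled via $L$ being compact) extends uniquely — but since $H$ is merely locally compact Hausdorff, I would instead exploit that $\psi$ kills the open subgroup $\phi(\Lambda)$: then $\psi$ descends to a homomorphism on the discrete group $\phi(\Gamma)/\phi(\Lambda) \cong \Gamma/\Lambda'$ for an appropriate commensurated setup, and since it must extend over the coset space $G/L$ which is discrete... this shows $\psi$ is locally constant on cosets of $\overline{\phi(\Lambda)} = L$, hence continuous. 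Concretely: $\psi(\phi(\Lambda)) = 0$ and $\phi(\Lambda)$ is dense in $L$, so if $\psi$ were continuous it would vanish on $L$; conversely, define $\bar\psi$ on $G$ by first checking it is constant on each coset $gL$ — for $g = \phi(\gamma)$ this holds since $\psi(\phi(\gamma)\phi(\lambda)) = \chi(\gamma\lambda) = \chi(\gamma) = \psi(\phi(\gamma))$ — then since $G/L$ is discrete and $\psi$ is constant on cosets meeting $\phi(\Gamma)$, density gives a well-defined locally constant, hence continuous, extension $\psi \colon G \to H$, which one checks is a homomorphism and satisfies $\psi(L) = 0$, i.e. $\psi \in \Homtop(G,H)_L$, and $\psi \circ \phi = \chi$ by construction.

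I expect the genuinely delicate point to be making rigorous that the assignment $gL \mapsto \psi(\phi(\gamma))$ (for $\phi(\gamma) \in gL$) is well-defined on \emph{all} of $G/L$ and not just on the cosets that meet $\phi(\Gamma)$: one must use that $\phi(\Gamma)$ is dense and $L$ is open so that $\phi(\Gamma)$ meets every coset of $L$, which secures well-definedness on all of $G/L$, and then that $\psi$ so defined respects multiplication — this last verification uses density once more together with continuity of multiplication in $G$. Everything else (well-definedness of $\phi^*$, injectivity, the homomorphism and continuity checks for the extension) is routine topological-group bookkeeping.
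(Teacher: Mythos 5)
Your proposal is correct, but it takes a genuinely different route from the paper's. The paper handles surjectivity by first invoking the universal property of TDLC completions (Theorem~\ref{thm:compl}) to reduce to the case $G=\Gamma\slcpt\Lambda$, and then proving the extension lemma (Lemma~\ref{lem:mapfromslcpt}) using the concrete model of the Schlichting completion inside $\operatorname{Sym}(\Gamma/\Lambda)$: a simultaneous-approximation argument (pick $g'\in\Gamma$ matching $\sigma'$ at the two cosets $\Lambda$ and $\sigma(\Lambda)$ at once) is used to prove the extension is a homomorphism \emph{before} establishing continuity. You instead work directly with an arbitrary completion $\phi\colon\Gamma\to G$: since $L$ is open and $\phi(\Gamma)$ dense, every coset $gL$ meets $\phi(\Gamma)$, so setting $\bar\psi(g):=\chi(\gamma)$ for any $\gamma$ with $\phi(\gamma)\in gL$ is well-defined (because $\phi^{-1}(L)=\Lambda\subseteq\ker\chi$), constant on $L$-cosets, hence locally constant and continuous. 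Having continuity in hand, multiplicativity is then free: the two continuous maps $G\times G\to H$, $(g,h)\mapsto\bar\psi(gh)$ and $(g,h)\mapsto\bar\psi(g)\bar\psi(h)$, agree on the dense subset $\phi(\Gamma)\times\phi(\Gamma)$ and $H$ is Hausdorff, so they coincide. This bypasses both the universal property and the simultaneous-approximation trick, and is in some respects cleaner than the paper's chain of reductions.

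Two small remarks on the write-up. First, the aside about descending to a homomorphism on ``$\phi(\Gamma)/\phi(\Lambda)$'' does not quite parse, since $\phi(\Lambda)$ is not normal in $\phi(\Gamma)$; fortunately your ``Concretely:'' paragraph abandons that framing and proceeds correctly via constancy on $L$-cosets. Second, you leave the multiplicativity verification as ``uses density once more together with continuity of multiplication'' — this is the right idea, but it is worth spelling out that the argument is the standard ``two continuous maps into a Hausdorff space agreeing on a dense subset are equal'', applied to $G\times G\to H$, and that it is essential you establish continuity of $\bar\psi$ \emph{before} attempting to prove it is a homomorphism. If instead, like the paper, one tries to prove multiplicativity first, one genuinely needs a simultaneous-approximation step: choosing $\gamma$ with $\phi(\gamma)\in gL$ and $\gamma'$ with $\phi(\gamma')\in hL$ independently does \emph{not} guarantee $\phi(\gamma\gamma')\in ghL$, since that would require $L\cap\phi(\gamma')L\phi(\gamma')^{-1}$ to absorb the coset representatives, which fails in general.
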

\begin{proof}
    The map $\phi^*$ is injective because $\phi$~has dense image.

    To see that $\phi^*$~is surjective, observe that the right-hand triangle in Theorem~\ref{thm:compl} yields a commutative triangle
    $$\begin{tikzcd}
        \Homtop(\Gamma, H)_\Lambda &\\
        \Homtop(G,H)_L \ar[u,"\phi^*"]& \Homtop(\G, H)_{\L}\ar[ul,"\alpha^*"']\ar[l,"\check\phi^*"]
    \end{tikzcd},$$
    where to justify the subscripts, one uses the fact that $\phi$~and~$\alpha$ map $\Lambda$ densely into~$L$ and~$\L$, respectively.
    This diagram reduces us to showing that $\alpha^*$~is surjective, which is the content of Lemma~\ref{lem:mapfromslcpt} below.
\end{proof}

\begin{lem}[Induced maps on the Schlichting completion]\label{lem:mapfromslcpt}
    For every $f \in\Homtop(\Gamma,H)_\Lambda$ the assignment
    \begin{align*}
        \tilde f\colon \G &\to H\\
        \sigma &\mapsto f(g),
    \end{align*}
    where $g\in \Gamma$ is any representative of the coset $\sigma(\Lambda)$, defines a locally constant homomorphism that is trivial on~$\L$.
\end{lem}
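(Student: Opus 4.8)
The plan is to verify, in order: (i) that $\tilde f$ is well-defined, (ii) that it is a homomorphism, (iii) that it is locally constant (hence continuous), and (iv) that it is trivial on~$\L$. For well-definedness, I would first check that the formula does not depend on the choice of representative~$g$: if $g,g'\in\Gamma$ both represent $\sigma(\Lambda)$, then $g^{-1}g'\in\Lambda\subseteq\ker f$, so $f(g)=f(g')$. One should also note that every coset in~$\Gamma/\Lambda$ of the form $\sigma(\Lambda)$ for $\sigma\in\G$ does admit a representative in~$\Gamma$: indeed, since $\G=\overline{\alpha(\Gamma)}$ inside $\Sym(\Gamma/\Lambda)$, there is a net (or just an element, since the subsets $\{\sigma\in\G\mid \sigma(\Lambda)=g\Lambda\}$ are open and cover~$\G$) $\alpha(g)$ agreeing with~$\sigma$ at~$\Lambda$, so $\sigma(\Lambda)=\alpha(g)(\Lambda)=g\Lambda$. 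Hence $\ev(\sigma)=\sigma(\Lambda)\in\Gamma/\Lambda$ always has a $\Gamma$-representative, and $\tilde f = \bar f\circ\ev$ where $\bar f\colon\Gamma/\Lambda\to H$ is the (well-defined) map $g\Lambda\mapsto f(g)$ induced by~$f$.

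For the homomorphism property, given $\sigma,\sigma'\in\G$ pick $g,g'\in\Gamma$ with $\sigma(\Lambda)=g\Lambda$ and $\sigma'(\Lambda)=g'\Lambda$. The key point is that $(\sigma\sigma')(\Lambda)=\sigma(\sigma'(\Lambda))=\sigma(g'\Lambda)$, and I need to see this equals $gg'\Lambda$. Since $\sigma$ is the limit of elements $\alpha(h)$ (equivalently, agrees with some $\alpha(h)$ on the coset~$g'\Lambda$, by openness of the relevant cylinder set), we get $\sigma(g'\Lambda)=hg'\Lambda$, while $\sigma(\Lambda)=h\Lambda=g\Lambda$ forces $h^{-1}g\in\Lambda$; a short computation using $\Lambda\subseteq\ker f$ and commensuration is not even needed here — one just needs $hg'\Lambda$ to be the right coset, and since $f(h)=f(g)$, we obtain $\tilde f(\sigma\sigma')=f(hg')=f(h)f(g')=f(g)f(g')=\tilde f(\sigma)\tilde f(\sigma')$. (I should double-check the order of multiplication in $\Sym(\Gamma/\Lambda)$ matches the left-action convention so that $(\sigma\sigma')(x)=\sigma(\sigma'(x))$; with the left-action convention used in Section~\ref{sec:schlichting} this is automatic.)

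Local constancy is immediate from the factorization $\tilde f=\bar f\circ\ev$: the map $\ev\colon\G\to\Gamma/\Lambda$ is locally constant (as observed just before Proposition~\ref{prop:schlichting_from_compl}, since $\Gamma/\Lambda$ is discrete and the topology on~$\G$ is the compact-open topology), and $\Gamma/\Lambda$ is discrete, so $\bar f\circ\ev$ is locally constant; in particular $\tilde f$ is continuous. Finally, triviality on~$\L=\overline{\alpha(\Lambda)}$: since $\tilde f$ is continuous and $H$ is Hausdorff, it suffices to show $\tilde f$ vanishes on the dense subset $\alpha(\Lambda)$ of~$\L$; but for $\lambda\in\Lambda$ we have $\alpha(\lambda)(\Lambda)=\lambda\Lambda=\Lambda$, so $\tilde f(\alpha(\lambda))=f(\lambda)=1$ because $\lambda\in\Lambda\subseteq\ker f$. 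Hence $\tilde f$ is trivial on~$\L$, completing the proof.

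I expect the only genuinely delicate point to be the homomorphism property, specifically making rigorous the claim that a point $\sigma\in\G$ "behaves like $\alpha(h)$" on any prescribed single coset: this rests on the fact that for each $x\in\Gamma/\Lambda$ the evaluation-at-$x$ map $\G\to\Gamma/\Lambda$ is continuous with discrete target, so its fibers are open, and density of $\alpha(\Gamma)$ then supplies an~$h$ with $\alpha(h)(x)=\sigma(x)$ and $\alpha(h)(\Lambda)=\sigma(\Lambda)$ simultaneously (finitely many coordinates at a time is all one needs). Everything else is routine bookkeeping with cosets and the inclusion $\Lambda\subseteq\ker f$.
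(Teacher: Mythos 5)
Your proof is correct and takes essentially the same approach as the paper's: you verify well-definedness via $\Lambda\subseteq\ker f$, establish the homomorphism property by approximating one of the two permutations on a two-point subset of $\Gamma/\Lambda$ using density of $\alpha(\Gamma)$ and the pointwise-convergence topology, deduce local constancy from the factorization through~$\ev$, and get triviality on~$\L$ from continuity plus Hausdorffness of~$H$. (The paper approximates $\sigma'$ on $\{\Lambda,\sigma(\Lambda)\}$ whereas you approximate $\sigma$ on $\{\Lambda,\sigma'(\Lambda)\}$, but this is the same argument up to relabeling.)
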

\begin{proof}
    Since $f$~vanishes on~$\Lambda$, it is clear that the assignment is independent of the choice of~$g$.

    To verify that $\tilde f$ is a group homomorphism,    
    let $\sigma, \sigma' \in \G$.
    Choose a representative $g\in \Gamma$ of~$\sigma(\Lambda)$, so $\tilde f(\sigma) = f(g)$.
    Since $\Sym(\Gamma/\Lambda)$ has the topology of pointwise convergence and $\Gamma / \Lambda$~is discrete, the action of~$\sigma'$ on any finite subset of $\Gamma / \Lambda$ is realized by acting with some element of~$\Gamma$. In particular, there is $g'\in \Gamma$ such that
    $$g'\Lambda = \sigma'(\Lambda)\qquad  \text{and}\qquad g'\sigma(\Lambda) = \sigma'(\sigma(\Lambda)).$$
    From the the first condition, we get $\tilde f(\sigma') = f(g')$, and the second condition yields
    $\sigma'(\sigma (\Lambda)) = g'g\Lambda$, whence
    $$\tilde f (\sigma'\circ \sigma) = f(g'g) = f(g')f(g) = \tilde f(\sigma')\tilde f(\sigma).$$

    We now show $\tilde f$~is locally constant at each permutation~$\sigma\in \G$. Let $g\in \Gamma$ represent the coset $\sigma(\Lambda)$ and consider the open neighborhood~$U$ of~$\sigma$ given by
    $$U:= \{\xi\in \G \mid \xi(\Lambda) = g\Lambda\}.$$
    Then, for every $\xi \in U$, we have $\tilde f(\xi) = f(g)$.

    Finally, for every~$h\in\Lambda$, we have $\tilde f(\alpha(h)) = f(1)=1$. Since $H$~is Hausdorff, it follows from continuity that $\tilde f$~vanishes on the closure $\overline{\alpha(\Lambda)}=\L$.
\end{proof}

\begin{cor}[Character spaces of TDLC completions]\label{cor:char_space} Precomposition with $\phi$ induces an isomorphism of $\RR$-vector spaces
   $$\phi^*\colon \Homtop(G, \RR) \xrightarrow{\cong} \Homtop(\Gamma, \RR)_\Lambda.$$
\end{cor}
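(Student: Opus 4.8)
The plan is to deduce this directly from Proposition~\ref{prop:maps_out_of_completion} by specializing to the case $H = \RR$. That proposition already supplies a bijection $\phi^* \colon \Homtop(G, \RR)_L \xrightarrow{\cong} \Homtop(\Gamma, \RR)_\Lambda$, so the only two points left to address are that the source $\Homtop(G,\RR)_L$ coincides with all of $\Homtop(G,\RR)$, and that this bijection is $\RR$-linear.

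For the first point, I would observe that every character $\bar\chi \colon G \to \RR$ automatically vanishes on~$L$, so that $\Homtop(G, \RR)_L = \Homtop(G, \RR)$. Indeed, by definition of a TDLC completion, $L$~is a compact open subgroup of~$G$, so the restriction $\bar\chi|_L$ is a continuous homomorphism from a compact group to~$\RR$; its image is therefore a compact subgroup of~$\RR$, hence trivial. Thus $\bar\chi$ kills~$L$, as claimed. (As a consistency check, this also explains why the image of the map lies in $\Homtop(\Gamma,\RR)_\Lambda$: since $\phi(\Lambda) \subseteq L$ and $\bar\chi$ vanishes on~$L$, the composite $\bar\chi \circ \phi$ vanishes on~$\Lambda$.)

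The second point is routine: precomposition with the fixed map~$\phi$ is additive and commutes with multiplication by real scalars, since for $\bar\chi_1, \bar\chi_2 \in \Homtop(G,\RR)$ and $r \in \RR$ one has $(\bar\chi_1 + r\bar\chi_2) \circ \phi = \bar\chi_1 \circ \phi + r(\bar\chi_2 \circ \phi)$ as functions on~$\Gamma$. Hence $\phi^*$ is a bijective $\RR$-linear map, i.e.\ an isomorphism of $\RR$-vector spaces. There is essentially no obstacle beyond recalling that $\RR$ has no nontrivial compact subgroups; all the genuine content of the statement is already packaged in Proposition~\ref{prop:maps_out_of_completion}.
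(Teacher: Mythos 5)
Your proof is correct and follows exactly the paper's route: apply Proposition~\ref{prop:maps_out_of_completion} with $H=\RR$, then note that $\Homtop(G,\RR)_L = \Homtop(G,\RR)$ because $L$ is compact and $\RR$ has no nontrivial compact subgroups. You simply spell out the linearity check that the paper leaves implicit.
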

\begin{proof}
    This is a direct consequence of Proposition~\ref{prop:maps_out_of_completion} and the fact that, since $L$~is compact, all characters on~$G$ vanish on~$L$.
 \end{proof}

Note in particular that the map~$\check\phi$ from Theorem~\ref{thm:compl} induces an isomorphism of character spaces
\[ \check\phi^* \colon \Homtop(\G,\RR) \to \Homtop(G,\RR).\]

\begin{prop}[All TDLC completions have the same $\Sigma$-theory]\label{prop:completions_have_same_Sigma}
    For every $n\in \NN$ and commutative ring~$R$,
    the isomorphism~$\check\phi^*$ restricts to bijections $\TopS^n(\G) \cong\TopS^n(G)$ and $\TopS^n(\G;R) \cong\TopS^n(G;R)$.

    As a consequence, the $n$-th homotopical and homological $\Sigma$-sets of any two TDLC completions of~$(\Gamma, \Lambda)$ are in canonical bijection.
\end{prop}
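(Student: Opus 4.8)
The plan is to show that $\check\phi$, being a quotient map with compact kernel, induces the desired bijections on $\Sigma$-sets, and then deduce the "canonical bijection" statement by composing these isomorphisms. The key point is that $\Sigma$-sets are insensitive to quotients by compact normal subgroups — a fact that should parallel the insensitivity of compactness properties under such quotients (cited earlier from \cite[Theorem~3.20]{CCC20}). So the first thing I would do is reduce to the following claim: if $q\colon G\onto G/N$ is a quotient by a compact normal subgroup $N$, and $\bar\chi\colon G/N\to\RR$ is a character with $\chi:=\bar\chi\circ q$, then $\bar\chi\in\TopS^k(G/N)$ iff $\chi\in\TopS^k(G)$ (and similarly over $R$). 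Applying this with $G/N = \G$ (via Proposition~\ref{prop:schlichting_from_compl}, identifying $\ker\check\phi$ with the normal core $N$) gives exactly that $\check\phi^*$ restricts to bijections on all $\TopS^n$ and $\TopS^n(-;R)$.

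To prove the reduction claim, I would use the proper-action description of $\Sigma$-sets (Proposition~\ref{prop:proper_actions}). The idea: since $N$ is compact, the action of $G$ on itself by left multiplication descends to a proper action of $G$ on $G/N$ (a locally compact Hausdorff space), and in fact $G/N$ with the $G$-action factoring through $q$ is a proper $G$-space. One checks that $G_\chi = q^{-1}((G/N)_{\bar\chi})$, since $\bar\chi\circ q = \chi$. Now compare the two filtrations: on the $G/N$ side we have $((G/N)_{\bar\chi}\cdot\E K)_{K\in\C(G/N)}$ of $\E(G/N)$, and on the $G$ side, viewing $G/N$ as a proper $G$-space, we have $(G_\chi\cdot\E K)_{K\in\C(G/N)}$ of the same simplicial set $\E(G/N)$. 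Because $G_\chi$ and $(G/N)_{\bar\chi}$ have the same orbits on $G/N$ (an element of $G_\chi$ acts exactly as its image in $(G/N)_{\bar\chi}$), these two filtrations literally coincide. Hence by Proposition~\ref{prop:proper_actions} applied on each side, $\chi\in\TopS^k(G)\iff$ the filtration is essentially $(k-1)$-connected $\iff \bar\chi\in\TopS^k(G/N)$, and likewise for acyclicity over $R$.

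For the final sentence: given two TDLC completions $\phi_1\colon\Gamma\to G_1$ and $\phi_2\colon\Gamma\to G_2$ of $(\Gamma,\Lambda)$, Theorem~\ref{thm:compl} provides quotient maps $\check\phi_i\colon G_i\to\G$ with compact kernel, and the first part shows each $\check\phi_i^*$ restricts to bijections $\TopS^n(\G)\cong\TopS^n(G_i)$ (and homologically). Composing, $(\check\phi_2^*)\circ(\check\phi_1^*)^{-1}$ is a canonical bijection $\TopS^n(G_1)\cong\TopS^n(G_2)$ — canonical because the $\check\phi_i$ are the unique continuous maps making the triangles in Theorem~\ref{thm:compl} commute. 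The main obstacle I anticipate is verifying cleanly that $G/N$ (with $G$ acting through $q$) is a \emph{proper} $G$-space and that Proposition~\ref{prop:proper_actions} genuinely applies to this possibly non-faithful action; one needs the action to be proper in the sense used there, which follows from $N$ being compact and the translation action of $G/N$ on itself being proper, but the bookkeeping of identifying the two filtrations on $\E(G/N)$ as the \emph{same} subobjects (not merely homotopy equivalent ones) is the step to get exactly right.
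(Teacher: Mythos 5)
Your proposal is correct, and it takes a genuinely different (and in a sense more elementary) route than the paper. The paper's own proof observes that $\check\phi$ has compact kernel $N$, notes that $N$ is therefore of type $\mathrm C_\infty$ and $\mathrm{CP}_\infty(R)$, and then cites the short-exact-sequence theorem for $\Sigma$-sets of locally compact groups (\cite[Theorem~8.2]{BHQ24a}, \cite[Theorem~10.4]{BHQ24b} — precisely the theorems that Propositions~\ref{prop:sigmaofquotient} and~\ref{prop:sigmaofquotient_homological} later generalize) to conclude that membership of a character and of its pullback agree, and then finishes with Corollary~\ref{cor:char_space}. You instead bypass that substantial theorem entirely: you apply the proper-action characterization (Proposition~\ref{prop:proper_actions}) to the proper $G$-action on $G/N$ (proper because $q$, having compact kernel, is a proper map), and observe that since $q(G_\chi) = (G/N)_{\bar\chi}$, the resulting filtration $(G_\chi\cdot\E K)_{K\in\C(G/N)}$ of $\E(G/N)$ is \emph{literally the same} as the filtration $((G/N)_{\bar\chi}\cdot\E K)_{K\in\C(G/N)}$ used to define $\TopS^n(G/N)$ and $\TopS^n(G/N;R)$ — no homotopy equivalence of filtrations is even needed. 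This is a slicker argument for the special case of a compact kernel, trading a citation of a difficult theorem for a direct comparison of filtrations. Two small remarks: (i) you invoke Proposition~\ref{prop:schlichting_from_compl} to identify $\ker\check\phi$ with the normal core, but Theorem~\ref{thm:compl} already gives directly that $\check\phi$ is a quotient map with compact kernel, which is all you use; (ii) the concern you flag about non-faithful actions is unfounded — Proposition~\ref{prop:proper_actions} imposes no faithfulness hypothesis, so no extra bookkeeping is required beyond checking properness of $q$, which holds because $q$ is an open surjection with compact kernel.
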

\begin{proof}
    Theorem~\ref{thm:compl} tells us that $\check \phi \colon G\to\G$ is a quotient map with compact kernel~$N$. In particular, $N$~is of type~$\mathrm{C}_n$ and~$\mathrm{CP}_n(R)$. Thus, by known results \cite[Theorem~8.2]{BHQ24a}\cite[Theorem~10.4]{BHQ24b}, which we will generalize in Propositions~\ref{prop:sigmaofquotient} and~\ref{prop:sigmaofquotient_homological}, each character on~$\G$ lies in $\TopS^n(\G)$ if and only if its pullback to~$G$ lies in~$\TopS^n(G)$ (and similarly for the homological $\Sigma$-sets)\footnote{The given reference for the homological theorem is only stated for $R=\ZZ$, but, as we will see in the proof of Proposition~\ref{prop:sigmaofquotient_homological}, the argument works on arbitrary~$R$.}. As we already observed that each character in $G$~is the pullback of a unique character in~$\G$, the conclusion follows.
\end{proof}

\section{Proof of Theorem~\ref{thm:main_intro}}\label{sec:main_proof}

We now start making our way to the proof of our main result, Theorem~\ref{thm:main_intro}. Let us fix for all of Section~\ref{sec:main_proof} a locally compact Hausdorff group~$\Gamma$ and a closed subgroup $\Lambda\subseteq \Gamma$ that almost commutes with compacts. Denote by $p\colon \Gamma \onto \Gamma /\Lambda$ the projection to the space of left cosets, which carries an action of~$\Gamma$ by left multiplication. 
Fix also a commutative ring~$R$, a natural $n\in \NN$, and a character $\chi\colon \Gamma \to \RR$ that vanishes on~$\Lambda$.

The proof will consist of two main parts. In Section~\ref{sec:part1}, we will relate membership of~$\chi$ in the $\Sigma$-sets of~$\Gamma$ to the essential connectedness / acyclicity properties of the filtration $(\Gamma_\chi \cdot \E F)_{F \in \C(\Gamma / \Lambda)}$ of $\E (\Gamma / \Lambda)$, under the assumption that $\Lambda$~is of type~$\mathrm{C}_n$ / $\mathrm{CP}_n(R)$. This task will be performed separately for the homological and the homotopical $\Sigma$-sets, and both arguments will closely follow the proofs of analogous theorems established earlier for $\Lambda$~normal \cite[Theorem~8.2]{BHQ24a} \cite[Theorem~J]{BHQ24b}. We will sidestep the fact that $\Lambda$~is not normal using the assumption that $\Lambda$~almost commutes with compacts.

In Section~\ref{sec:part2}, we introduce the assumption that $\Lambda$~is open in~$\Gamma$, so  $(\Gamma,\Lambda)$~is a Hecke pair and one can define the Schlichting completion $\G:= \Gamma\slcpt\Lambda$. We then show that the filtration
$(\Gamma_\chi \cdot \E F)_{F\in \C(\Gamma/\Lambda)}$ of~$\E(\Gamma/ \Lambda)$ is homotopy-equivalent to the filtration of~$\E \G$ that defines membership of the induced character $\tilde \chi\colon \G \to \RR$ in the $\Sigma$-sets of~$\G$. Proposition~\ref{prop:completions_have_same_Sigma} will allow us to extend the result to all other TDLC completions $\phi \colon \Gamma \to G$. This second part of the proof makes no distinction between homotopical and homological $\Sigma$-sets.

\subsection{From $\Gamma$ to $\Gamma / \Lambda$}\label{sec:part1}

As mentioned earlier, throughout all of Section~\ref{sec:part1} we will assume that the closed subgroup $\Lambda\subseteq\Gamma$ almost commutes with compacts.

In comparing the relevant filtrations of $\E \Gamma$ and $\E(\Gamma / \Lambda)$, 
we will use the fact that the poset of compact subsets of $\Gamma / \Lambda$ may be replaced by that of~$\Gamma$ \cite[Corollary~8.4(2)]{BHQ24a}\footnote{The quoted reference presents this statement under the assumption that $\Lambda$~is a normal subgroup of~$\Gamma$, but the same proof applies verbatim in our case.}:

\begin{lem}[Re-indexing by $\C(\Gamma)$]\label{lem:reindexing}
    We have 
    \[\C(\Gamma / \Lambda) = \{p(C) \mid C\in \C(\Gamma)\}.\]
\end{lem}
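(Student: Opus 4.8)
The plan is to prove the two inclusions separately. The inclusion $\{p(C) \mid C \in \C(\Gamma)\} \subseteq \C(\Gamma/\Lambda)$ is immediate: $p$ is continuous, so the image of a compact set is compact. The substance is the reverse inclusion, which asserts that every compact subset of $\Gamma/\Lambda$ lifts to a compact subset of $\Gamma$. So fix $F \in \C(\Gamma/\Lambda)$; I want to produce $C \in \C(\Gamma)$ with $p(C) = F$.

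First I would use local compactness of $\Gamma$ together with surjectivity of $p$: pick, for each coset $x\Lambda \in F$, an element $g_{x} \in p^{-1}(x\Lambda)$, and a compact neighbourhood $C_{x}$ of $g_{x}$ in $\Gamma$. Then $p(C_{x})$ is a neighbourhood of $x\Lambda$ in $\Gamma/\Lambda$ (here one uses that $p$ is an open map, which holds for the quotient projection onto a coset space), so the sets $\{\operatorname{int} p(C_x)\}_{x\Lambda \in F}$ form an open cover of the compact set $F$. Extract a finite subcover indexed by $x_1\Lambda, \dots, x_m\Lambda$, and set $C_{0} := C_{x_1} \cup \dots \cup C_{x_m}$, a compact subset of $\Gamma$ with $F \subseteq p(C_0)$. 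The only remaining issue is that $p(C_0)$ may be strictly larger than $F$, so I must trim $C_0$ back down. For this, note that $p^{-1}(F)$ is closed in $\Gamma$ (as $F$ is closed — a compact subset of the discrete space $\Gamma/\Lambda$, when $\Lambda$ is open; in the general almost-commuting setting one still has that points, hence finite sets, of $\Gamma/\Lambda$ are closed since $\Lambda$ is closed, so $F$ finite is closed). Hence $C := C_0 \cap p^{-1}(F)$ is a closed subset of the compact set $C_0$, therefore compact, and by construction $p(C) \subseteq F$; combined with $F \subseteq p(C_0)$ and $C = C_0 \cap p^{-1}(F)$ one checks $p(C) = F$, since every $x\Lambda \in F$ has a preimage in $C_0$, and that preimage automatically lies in $p^{-1}(F)$, hence in $C$.

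The main obstacle, such as it is, is making sure the topological hypotheses are exactly what is needed: one needs $p$ to be open (standard for projections to coset spaces of topological groups) and one needs finite subsets of $\Gamma/\Lambda$ to be closed so that $p^{-1}(F)$ is closed. In the setting of this section $\Lambda$ is closed, so singletons in $\Gamma/\Lambda$ are closed and a compact — indeed, in the Hecke-pair case, finite — subset $F$ is closed; this is all that the trimming step requires. Everything else is a routine compactness argument. One could alternatively invoke the cited reference \cite[Corollary~8.4(2)]{BHQ24a} verbatim, observing as the authors do that its proof never uses normality of $\Lambda$, only that $p$ is an open continuous surjection onto a space in which the relevant compact sets are closed; the self-contained argument above is given for completeness.
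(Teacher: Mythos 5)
Your argument is essentially correct, and it provides a self-contained version of what the paper handles by citation only: the paper simply invokes \cite[Corollary~8.4(2)]{BHQ24a}, remarking in a footnote that the proof there never uses normality of $\Lambda$. Your covering-and-trimming construction is the natural way to write that proof out, so the two routes agree in substance.

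There is, however, one step whose stated justification does not hold in the generality of Section~\ref{sec:part1}. You argue that $F$ is closed in $\Gamma/\Lambda$ by observing that in the Hecke-pair case $\Gamma/\Lambda$ is discrete, and that ``in the general almost-commuting setting \ldots $F$ finite is closed.'' But in the general setting of Section~\ref{sec:part1}, $\Lambda$ is only required to be a \emph{closed} subgroup almost commuting with compacts, not open, so $\Gamma/\Lambda$ need not be discrete and a compact $F \subseteq \Gamma/\Lambda$ need not be finite. The reasoning via finiteness therefore breaks down precisely where you need it. The correct (and shorter) justification is that a coset space $\Gamma/\Lambda$ by a closed subgroup of a Hausdorff topological group is Hausdorff, so $F$ compact implies $F$ closed; this is independent of any discreteness or finiteness. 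With that one-line replacement, the trimming step $C := C_0 \cap p^{-1}(F)$ goes through exactly as you wrote it, and the rest of your argument (open projection, local compactness, finite subcover) is fine.
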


\subsubsection{Homotopical lifting}\label{sec:homotopical_lifting}
The results in Section~\ref{sec:homotopical_lifting} are adjusted from the article on homological $\Sigma$-sets for locally compact groups \cite[Lemma~8.5 and Theorem~8.2]{BHQ24a}.

We remind the reader that a simplicial set $X$ is \textbf{finite relative to} a simplicial subset~$A$ if $X$~has only finitely many nondegenerate simplices that do not lie in~$A$.

\begin{lem}[Lifting along $\Lambda$~of type~$\mathrm 
C_n$]\label{lem:lift}
    If $\Lambda$ is of type~$\mathrm{C}_n$, then for every $C\in \C(\Gamma)$, there is $D\in \C(\Gamma)_{\supseteq C}$ satisfying the following lifting property: given any pair $A\subseteq X$ of simplicial sets of dimension at most~$n$, with $X$~finite relative to~$A$, and a commutative square
	\[\begin{tikzcd}
		A \arrow[r, "\eta"] \arrow[d, hook]& \Gamma_{\chi} \cdot \E C \arrow[d, "p"]\\
		X \arrow[r,"\mu"] & \Gamma_\chi \cdot \E p(C)
	\end{tikzcd},\]
	there are $m\in \NN$ and a map $\tilde \mu \colon \SD^m(X) \to \Gamma_{\chi}\cdot \E D$ such that in the diagram
	\[\begin{tikzcd}
		\SD^m(A) \ar[d,hook]\ar[r,"\Phi^m"]& A \arrow[r, "\eta"] &  \Gamma_{\chi} \cdot \E C  \arrow[r,hook] &  \Gamma_{\chi} \cdot \E D \arrow[d, "p"] \\
		\SD^m(X) \ar[r,"\Phi^m"']  \arrow[rrru,"\tilde \mu"] & X \arrow[r,"\mu"'] & \Gamma_\chi \cdot \E p(C) \arrow[r,hook] & \Gamma_\chi \cdot \E p(D)
	\end{tikzcd},\]
		the upper triangle commutes and the lower triangle commutes up to simplicial homotopy.
\end{lem}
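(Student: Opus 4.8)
The plan is to mimic the strategy of \cite[Lemma~8.5]{BHQ24a}, performing the lift one simplex at a time, where now the non-normality of $\Lambda$ is absorbed by its almost commutativity with compacts. First I would reduce to the case $X = A \cup \Delta^k$ for a single nondegenerate $k$-simplex ($k \le n$) attached along its boundary to $A$: since $X$ is finite relative to $A$, one can build $X$ from $A$ by attaching finitely many nondegenerate simplices in order of increasing dimension, and a lift can be constructed inductively, each step feeding its output $D$ as the input $C$ for the next step (using that $\C(\Gamma)$ is directed, so the finitely many resulting compacts can be absorbed into a single one). The subdivisions $\SD^m$ accumulate across these finitely many steps, but that is harmless.

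For the single-simplex step, the key point is the following. We are given $\mu$ on $\Delta^k$ landing in $\Gamma_\chi \cdot \E p(C)$, whose boundary $\mu|_{\partial \Delta^k}$ has a chosen lift $\eta$ to $\Gamma_\chi \cdot \E C$. Pick $g \in \Gamma_\chi$ so that $\mu(\Delta^k) \subseteq g \cdot \E p(C)$; one may choose $g$ to agree, on the level of cosets, with the ``$\Gamma_\chi$-coordinate'' already used by $\eta$ on one vertex, so that after translating by $g^{-1}$ the problem becomes: lift a map $\partial \Delta^k \to \E C$ (a translate of part of $\eta$) that, when composed with $p \colon \E C \to \E p(C)$, extends over $\Delta^k$. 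Now $p^{-1}(p(C)) \cap \Gamma = C\Lambda$, and the fibres of $p|_{C\Lambda} \colon C\Lambda \to p(C)$ are cosets of the form $(c\Lambda) \cap C\Lambda$; the almost-commutativity hypothesis gives a compact $C'$ with $C\Lambda \subseteq \Lambda C'$, which is what controls how far one must ``spread out'' to find lifts. Concretely, the obstruction to extending the lift of $\partial\Delta^k$ over $\Delta^k$ inside $\E(C\Lambda)$ lives in $\pi_{k-1}$ of a fibre of $p$, i.e.\ of $\E$ of a coset intersected with $C\Lambda$ — but $\E$ of any nonempty set is contractible, so there is no genuine obstruction to extending \emph{within} $\E(C\Lambda)$. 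The real content is that we must land in $\Gamma_\chi \cdot \E D$, i.e.\ all vertices of the extension must be writable as $h \cdot (\text{element of } D)$ with $h \in \Gamma_\chi$; passing from $C$ to $D \supseteq C C'$ (or a suitable compact built from $C$, $C'$, and the finitely many simplices of $X$) and subdividing enough times is exactly what makes room for this.

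So the order of steps is: (1) use Lemma~\ref{lem:reindexing} and directedness of $\C(\Gamma)$ to set up the induction and reduce to attaching one nondegenerate simplex; (2) for that simplex, choose the translating element $g \in \Gamma_\chi$ and reduce to a lifting problem for $\E C \to \E p(C)$; (3) invoke almost commutativity to produce $C' $ with $C\Lambda \subseteq \Lambda C'$, set $D$ to be a compact containing $C$, all finitely many relevant translates, and $CC'$; (4) build the lift $\tilde\mu$ by choosing, for each vertex of a sufficiently fine subdivision $\SD^m(\Delta^k)$, a lift of its image lying in $\Gamma_\chi \cdot \E D$, using contractibility of $\E$ of a point to make these choices coherently (this is where $m$ is chosen — large enough that the star of each vertex maps into a single translate $g' \cdot \E p(C)$ with $g' \in \Gamma_\chi$, after which the vertex-by-vertex lifts automatically assemble to a simplicial map); (5) check the two triangles: the upper one commutes on the nose because $\tilde\mu$ was defined to extend $\eta \circ \Phi^m$, and the lower one commutes up to the evident simplicial homotopy (in the sense of Section~\ref{sec:SSets}) since $p\circ\tilde\mu$ and $\mu\circ\Phi^m$ land in the same translate and $\E p(D)$-translates are contractible, so the canonical ``straight-line'' simplicial homotopy between them is available.

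The main obstacle I expect is step~(4), specifically ensuring that the vertexwise lifts glue to an honest simplicial map into $\Gamma_\chi \cdot \E D$ rather than merely into $\E(C\Lambda)$: one needs the subdivision parameter $m$ large enough that every simplex of $\SD^m(\Delta^k)$ has all its vertices mapping into a common translate $g\cdot \E p(C)$ with $g\in\Gamma_\chi$ (so that $D \supseteq CC'$ suffices for the lift of that whole simplex), and one must verify this can be arranged using only the \emph{finiteness} of $X$ relative to $A$ plus almost commutativity — there is no compactness of the indexing poset to lean on, so the argument is combinatorial and local. Matching up $m$ across the finitely many inductive steps of~(1), and keeping track of how the compact $D$ grows, is bookkeeping but must be done carefully so that a single $D$ and a single $m$ work at the end. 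Everything else is a faithful transcription of the normal-subgroup argument, with ``$C\Lambda = \Lambda C$'' replaced by ``$C\Lambda \subseteq \Lambda C'$''.
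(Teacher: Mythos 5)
Your outline never uses the hypothesis that $\Lambda$ is of type $\mathrm{C}_n$, which is a fatal sign: the statement is false without it. The sentence ``the obstruction to extending the lift of $\partial\Delta^k$ over $\Delta^k$ inside $\E(C\Lambda)$ lives in $\pi_{k-1}$ of a fibre of $p$, i.e.\ of $\E$ of a coset intersected with $C\Lambda$ --- but $\E$ of any nonempty set is contractible, so there is no genuine obstruction'' misidentifies the extension problem. The boundary lift $\eta|_{\partial\sigma}$ is \emph{not} contained in a single fibre of $p$, nor in a single translate $g\cdot\E C$; it is spread out over $g\Lambda \cdot \E K$ for a compact $K$, and that simplicial set is typically not contractible (indeed, not even highly connected) because $\Lambda$ need not be compact. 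The actual content of the lemma is that, after enlarging $K$ to a suitable compact $D^\circ$, the inclusion $\Lambda\cdot \E K \hookrightarrow \Lambda \cdot \E D^\circ$ kills $\pi_{k-1}$; this is precisely the essential $(n-1)$-connectedness of the filtration $(\Lambda\cdot \E K)_{K\in\C(\Gamma)}$, i.e.\ $\Lambda$ being of type $\mathrm{C}_n$ (via Proposition~\ref{prop:proper_actions} with $\chi=0$). Without that step, there is no way to produce a compact $D$ independent of the particular square $(X,A,\eta,\mu)$.

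Two further issues. First, you misdescribe the role of subdivision: subdividing $\Delta^k$ does not shrink the image of $\mu$ nor localize each star into a single translate --- the image of $\SD^m(\sigma)$ equals the image of $\sigma$. Subdivision is needed only to realize the null-homotopy coming from the $\mathrm{C}_n$-hypothesis as an honest simplicial map (Lemma~\ref{lem:comb_pi_k}(2)). Second, your plan to induct on the number of attached simplices, feeding each output $D$ as the input $C$ of the next step, would yield a compact $D$ that depends on the number of nondegenerate simplices of $X$ outside $A$ --- but the lemma requires a single $D$ working for all relative-finite pairs $A\subseteq X$ of dimension $\le n$. The paper instead inducts on the skeleton dimension (bounded by $n$), and maintains a uniform auxiliary bound $(\star)$ tracking exactly which translate $g\Lambda\cdot\E(C'C'^{-1}D^\circ)$ contains the lift of each simplex; that bound is what makes $D$ independent of the particular $X$. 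Almost commutativity with compacts is then used twice: once to define the bound in $(\star)$ (replacing the old use of normality, $gC\Lambda C^{-1}\Lambda \subseteq g\Lambda C'C'^{-1}$), and once at the end to ensure $\Lambda D^\circ \subseteq D\Lambda$ so the lower triangle commutes up to simplicial homotopy inside $\Gamma_\chi\cdot\E p(D)$.
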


\begin{proof}
    Most of the proof will consist of finding $D^\circ\in \C(\Gamma)_{\supseteq C}$ satisfying all properties required of~$D$, with the exception that the lower triangle of the diagram might not commute up to homotopy. Afterwards, we will find $D\in \C(\Gamma)_{\supseteq D^\circ}$ such that this additional requirement is met.

    Using the assumption that $\Lambda$~almost commutes with compacts, let 
    $C' \in \C(\Gamma)$ satisfy  $C\Lambda\subseteq \Lambda C'$ and $C^{-1}\Lambda \subseteq \Lambda C'^{-1}$.

    To construct $D^\circ$, we proceed by induction over~$n$. Along the way, we also establish the following auxiliary statement:
    \begin{itemize}
		\item[($\star$)] Let $\sigma$ be a simplex of~$X$, and let $g'\in \Gamma_\chi$ be such that $\mu(\sigma)$ lies in~$g' \cdot  \E p(C)$. Then $\tilde \mu (\SD^m(\sigma))\subseteq g'\Lambda\cdot \E(C'C'^{-1}D^\circ)$.
	\end{itemize}

    For $n = 0$, take $D^\circ:=C'$, and suppose we are given a commutative square as above. For each vertex~$x$ of~$X$, we define $\tilde \mu(x)$ as follows: if $x\in A$, then we have not choice but to set $\tilde \mu(x) = \eta(x)$; if $x\not \in A$, define $\tilde \mu (x)$ to be any $p$-preimage of~$\mu(x)$. Clearly, the relevant triangle commutes. To verify~$(\star)$, fix $x\in X^{(0)}$ and $g' \in \Gamma_\chi$~with $\mu(x) \in g' \cdot \E p(C)$. We have  $\mu(x) \in p(g'C)$, so our construction of~$\tilde \mu$ by taking preimages implies that $\tilde\mu(x) \in g'C\Lambda\subseteq g'\Lambda C'$, which makes $\tilde \mu(x)$ a vertex of $g'\Lambda\cdot \E C' \subseteq g'\Lambda\cdot \E(C'C'^{-1}D^\circ)$.

    Now let $n\ge 1$. By induction, there is $D_-^\circ\in \C(\Gamma)_{\supseteq C}$ satisfying the stated extension property on commutative squares with $X$~of dimension at most~$n-1$. Consider the proper action of~$\Lambda$ on~$\Gamma$ by left multiplication.
    Since $\Lambda$~is of type~$\mathrm C_n$, Proposition~\ref{prop:proper_actions} applied to the zero character on~$\Lambda$ tells us that the filtration $(\Lambda\cdot \E K)_{K \in \C(\Gamma)}$ of $\E \Gamma$ is essentially $(n-1)$-connected. Thus, there exists $D^\circ \in \C(\Gamma)_{\supseteq C'C'^{-1}D_-^\circ}$ such that the inclusion $\Lambda\cdot \E(C'C'^{-1}D_-^\circ) \into \Lambda\cdot \E D^\circ$ is $\pi_{n-1}$-trivial. We claim that such~$D^\circ$ is as desired.

    To verify the extension property, let $X, A, \eta, \mu$ fit in a commutative square as above, with $X$~of dimension at most~$n$. By choice of~$D_-^\circ$, there are $m_-\in \NN$ and $\tilde\mu_- \colon \SD^{m_-}(X^{(n-1)}) \to \Gamma_\chi\cdot \E D_-^\circ$ solving the extension problem given by the restriction of $\eta$ and~$\mu$ to the $(n-1)$-skeleta $A^{(n-1)}\subseteq X^{(n-1)}$.

    We now define~$\tilde \mu$ on the (subdivided) non\-de\-ge\-ne\-rate $n$-simplices~$\sigma$ of~$X$ that are not in~$A$. Given such~$\sigma$, choose $g\in \Gamma_\chi$ such that $\mu(\sigma)$ lies in $g\cdot \E p(C)$.
    Property~$(\star)$ on~$\tilde \mu_-$ tells us that for each face~$\tau$ of~$\sigma$, we have $\tilde \mu_-(\SD^{m_-}(\tau)) \subseteq g\Lambda \cdot \E(C'C'^{-1}D_-^\circ)$.
    In other words, $\tilde \mu_-(\SD^{m_-}(\partial \sigma)) \subseteq g\Lambda \cdot \E(C'C'^{-1}D_-^\circ)$. Therefore, by choice of~$D^\circ$ (and Lemma~\ref{lem:comb_pi_k}), there are $m_\sigma \ge m_-$ and a map $\tilde \mu_\sigma \colon \SD^{m_\sigma}(\sigma) \to g\Lambda\cdot \E D^\circ$ extending~$\tilde \mu_- \circ \Phi^{m_\sigma - m_-}$.

    Since $X$ is finite relative to~$A$, we may take~$m\in \NN$ as the maximum of all the~$m_\sigma$, and we are now ready to construct $\tilde \mu \colon \SD^m(X) \to \Gamma_\chi\cdot \E D^\circ$.
    The definition on~$\SD^m(A)$ is forced to be as $\eta \circ \Phi^m$. On $\SD^m(X^{(n-1)})$, we define~$\tilde \mu$ as $\tilde \mu_- \circ \Phi^{m-m_-}$, which is consistent with the definition on~$\SD^m(A)$.
    Finally, on each nondegenerate $n$-simplex~$\sigma$ of~$X$ we define~$\tilde \mu$ as $\tilde \mu_\sigma \circ \Phi^{m-m_\sigma}$, which again matches the definition on $\SD^m(A \cup X^{(n-1)})$. Thus, $\tilde \mu$~is defined so that the upper triangle of the diagram in the lemma commutes. 

    Next we establish~$(\star)$, so let $\sigma$ and~$g'$ be as in its hypothesis.
	If $\sigma$~lies in~$X^{(n-1)}$, the statement follows directly from the induction hypothesis.
	
	If $\sigma$~is a nondegenerate $n$-simplex of~$X$ that is not in~$A$, and $g\in \Gamma_\chi$ is the element chosen when constructing~$\tilde \mu_\sigma$, we have:
	\begin{enumerate}
		\item $\mu(\sigma) \subseteq g\cdot \E p(C)$,
		\item $\tilde \mu(\SD^m(\sigma)) \subseteq g\Lambda\cdot \E D^\circ$, and 
		\item $\mu(\sigma) \subseteq g'\cdot \E p(C)$, the assumption of~$(\star)$.
	\end{enumerate}

	Conditions 1 and 3 imply that $p(gC) \cap p(g'C) \neq \emptyset$, which means there are $x, x'\in C$ and $h\in \Lambda$ with $gx = g'x'h$. Therefore $g = g' x' h x^{-1}\in g'C \Lambda C^{-1}$, and so Condition~2 allows us to conclude:
\begin{align*}
    \tilde\mu(\SD^m(\sigma)) &\subseteq g' C \Lambda C^{-1} \Lambda\cdot \E D^\circ\\
    &\subseteq g' C \Lambda C'^{-1} \cdot \E D^\circ \\
    &\subseteq g' \Lambda C'C'^{-1}\cdot \E D^\circ \\
    &\subseteq g'\Lambda\cdot \E(C'C'^{-1}D^\circ).
\end{align*}

	If $\sigma$~is a nondegenerate $n$-simplex of~$A$, we let $g \in \Gamma_\chi$ be any element for which $\eta(\sigma) \subseteq g\cdot \E C$, so in particular $g$~satisfies Condition~1. Using the fact that $\tilde \mu(\SD^m(\sigma)) = \eta(\sigma)$ and that $C\subseteq D^\circ$, we see that Condition~2 also holds for~$g$. Thus, the same argument as before establishes~$(\star)$.

    Finally, we again use that $\Lambda$~commutes with compacts to find $D\in\C(\Gamma)_{\supseteq D^\circ}$ such that $\Lambda D^\circ \subseteq D\Lambda$.
    
    We are left to show that for each commutative square $(X,A, \eta, \mu)$, the maps $\tilde\mu$ constructed above are such that the (unique) simplicial homotopy
	$J\colon \SD^m(X) \times \Delta^1 \to \E (\Gamma / \Lambda)$ from~$\mu\circ \Phi^m$ to~$p\circ \tilde\mu$ has image in $\Gamma_\chi \cdot \E p(D)$.
    To that end, we observe that for each simplex~$\sigma$ of~$X$, by our construction there is $g\in \Gamma_\chi$ such that
    \begin{enumerate}
    \item $\mu\circ \Phi^m(\SD^m(\sigma))\subseteq g\cdot \E p(C) \subseteq g\cdot \E p(D)$, and
    \item $\tilde\mu(\SD^m(\sigma))\subseteq g \Lambda \cdot \E D^\circ \subseteq g\cdot \E (\Lambda D^\circ) \subseteq g\cdot \E (D\Lambda) $.
    \end{enumerate}
    From the second condition, we deduce that $$p\circ \tilde \mu(\SD^m(\sigma)) \subseteq g\cdot \E p(D\Lambda) = g\cdot \E p(D).$$
	In other words, both maps take $\SD^m(\sigma)$ into the free simplicial set $g\cdot \E p(D)$, which thus contains all simplices of $J(\SD^m(\sigma) \times \Delta^1)$. The conclusion then follows.
\end{proof}

As a consequence, we obtain a characterization of low-dimensional homotopical $\Sigma$-sets of~$\Gamma$ in terms of the filtration $(\Gamma_\chi \cdot \E F)_{F\in \C(\Gamma / \Lambda)}$, generalizing an earlier theorem \cite[Theorem~8.2]{BHQ24a}.

\begin{prop}[$\TopS^k(\Gamma)$~via $\Gamma / \Lambda$]\label{prop:sigmaofquotient} If $\Lambda$~is of type~$\mathrm C_n$, then:

    \begin{enumerate}
        \item For every $k\le n$, if the filtration $(\Gamma_\chi \cdot \E F)_{F\in\C(\Gamma /\Lambda)}$ is essentially $(k-1)$-connected, then $\chi \in \TopS^k(\Gamma)$.

        \item For every $k\le n+1$, if $\chi\in \TopS^{k}(\Gamma)$, then  $(\Gamma_\chi \cdot \E F)_{F\in\C(\Gamma/\Lambda)}$ is essentially $(k-1)$-connected.
    \end{enumerate}
\end{prop}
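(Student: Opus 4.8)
The plan is to deduce both implications from the lifting lemma (Lemma~\ref{lem:lift}), together with the standard description of $\TopS^k(\Gamma)$ in terms of the filtration $(\Gamma_\chi \cdot \E C)_{C \in \C(\Gamma)}$ of $\E\Gamma$ and its comparison to the filtration $(\Gamma_\chi \cdot \E F)_{F \in \C(\Gamma/\Lambda)}$ of $\E(\Gamma/\Lambda)$ via the projection $p$. Throughout, I will use Lemma~\ref{lem:reindexing} to re-index the latter filtration by $\C(\Gamma)$, writing $\Gamma_\chi \cdot \E p(C)$ for its stage at $C \in \C(\Gamma)$; the map $p \colon \E\Gamma \to \E(\Gamma/\Lambda)$ restricts to surjections $\Gamma_\chi \cdot \E C \onto \Gamma_\chi \cdot \E p(C)$, and Lemma~\ref{lem:comb_pi_k} lets me check essential $(k-1)$-connectedness by representing spheres as simplicial maps out of subdivisions of $\partial\Delta^{j+1}$ and certifying nullhomotopy by extensions over subdivisions of $\Delta^{j+1}$.

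For item~(2), assume $\chi \in \TopS^k(\Gamma)$ with $k \le n+1$, so $(\Gamma_\chi \cdot \E C)_{C \in \C(\Gamma)}$ is essentially $(k-1)$-connected. Fix $C \in \C(\Gamma)$ and $j \le k-1$; I want $D \in \C(\Gamma)_{\supseteq C}$ such that $\pi_j(\Gamma_\chi \cdot \E p(C)) \to \pi_j(\Gamma_\chi \cdot \E p(D))$ is trivial. Take a simplicial map $\eta_\circ \colon \SD^m(\partial\Delta^{j+1}) \to \Gamma_\chi \cdot \E p(C)$ representing a class. Here $j+1 \le k \le n+1$, so its domain has dimension $\le n$; the point is to lift $\eta_\circ$ through $p$ and then use essential connectedness of the filtration of $\E\Gamma$ upstairs. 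Concretely, apply Lemma~\ref{lem:lift} to the square with $A = \emptyset$, $X = \SD^m(\partial\Delta^{j+1})$, and $\mu = \eta_\circ$ to obtain $D_1 \supseteq C$, a further subdivision, and a lift $\tilde\eta \colon \SD^{m'}(\partial\Delta^{j+1}) \to \Gamma_\chi \cdot \E D_1$ with $p \circ \tilde\eta$ simplicially homotopic (inside $\Gamma_\chi \cdot \E p(D_1)$) to $\eta_\circ \circ \Phi^{m'-m}$. By essential $(k-1)$-connectedness upstairs, choose $D \supseteq D_1$ so that $\tilde\eta$ becomes nullhomotopic in $\Gamma_\chi \cdot \E D$; push the nullhomotopy down via $p$ to see that $\eta_\circ$ is trivial in $\pi_j(\Gamma_\chi \cdot \E p(D))$. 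Since $C$ and $j$ were arbitrary, the downstairs filtration is essentially $(k-1)$-connected.

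For item~(1), assume $k \le n$ and that $(\Gamma_\chi \cdot \E F)_{F \in \C(\Gamma/\Lambda)}$ is essentially $(k-1)$-connected; I must show $\chi \in \TopS^k(\Gamma)$, i.e.\ $(\Gamma_\chi \cdot \E C)_{C \in \C(\Gamma)}$ is essentially $(k-1)$-connected. Fix $C \in \C(\Gamma)$; first enlarge $C$ to the $D$ provided by Lemma~\ref{lem:lift} for this $C$ (so all simplicial sets of dimension $\le n$, in particular $\le k$, lift), and then use essential $(k-1)$-connectedness downstairs to pick $F = p(E)$, $E \supseteq D$, killing $\pi_{k-1}$ of the image of $p(C)$. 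Given a sphere $\zeta \colon \SD^m(\partial\Delta^{j+1}) \to \Gamma_\chi \cdot \E C$ with $j \le k-1$, the composite $p \circ \zeta$ maps into $\Gamma_\chi \cdot \E p(C)$ and hence, by choice of $E$, extends after subdivision to $\mu \colon \SD^{m'}(\Delta^{j+1}) \to \Gamma_\chi \cdot \E p(E)$. Now apply Lemma~\ref{lem:lift} to the square with $A = \SD^{m'}(\partial\Delta^{j+1})$, $\eta = \zeta \circ \Phi^{\ast}$, $X = \SD^{m'}(\Delta^{j+1})$ (which has dimension $j+1 \le k \le n$, and is finite relative to $A$), and this $\mu$: this yields $E' \supseteq E$ and a lift $\tilde\mu \colon \SD^{m''}(\Delta^{j+1}) \to \Gamma_\chi \cdot \E E'$ whose restriction to the boundary is $\zeta \circ \Phi^{\ast}$. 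Thus $\zeta$ becomes nullhomotopic in $\Gamma_\chi \cdot \E E'$, proving the claim.

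The main obstacle is the bookkeeping around Lemma~\ref{lem:lift}: the lemma only lifts diagrams whose total space $X$ has dimension $\le n$, which is exactly why item~(1) holds for $k \le n$ while item~(2) tolerates $k \le n+1$ (in~(2) only the \emph{sphere} $\partial\Delta^{j+1}$, of dimension $\le n$, needs lifting, whereas in~(1) the \emph{disk} $\Delta^{j+1}$ must be lifted, forcing $j+1 \le n$). Care is also needed because Lemma~\ref{lem:lift} only gives commutativity of the lower triangle up to simplicial homotopy and introduces further subdivisions, so one must repeatedly invoke Lemma~\ref{lem:comb_pi_k} to translate between simplicial maps and genuine homotopy classes, and keep track of how the compact sets grow at each step. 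None of this is conceptually hard once the roles of ``sphere'' versus ``disk'' are correctly matched with the dimension bound in Lemma~\ref{lem:lift}; this matching is the one subtle point.
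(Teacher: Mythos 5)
Your proof is correct and follows essentially the same strategy as the paper's: in both items, the two ingredients are the lifting Lemma~\ref{lem:lift} and the hypothesis of essential $(k-1)$-connectedness on one side of $p$, and the dimension count (sphere vs.\ disk) correctly explains the asymmetry $k\le n$ versus $k\le n+1$. Item~(2) matches the paper's proof almost word for word (compact sets $C\to D_1\to D$).

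In item~(1), though, you have an unnecessary detour. You first invoke Lemma~\ref{lem:lift} to enlarge $C$ to $D$ "so that disks lift," then apply the hypothesis to get $E\supseteq D$, and then invoke Lemma~\ref{lem:lift} \emph{again} for $E$ to produce $E'$. The first invocation (producing $D$) plays no role: the disk $\mu$ you construct lands in $\Gamma_\chi\cdot\E p(E)$, not in $\Gamma_\chi\cdot\E p(C)$, so the lifting property of $D$ with respect to $C$ never applies, and it is only the second invocation for $E$ that does the work. The constraint $E\supseteq D$ is likewise never used. The paper arranges this more efficiently by reversing the order of the two steps: first enlarge $p(C)$ to $p(C')$ using the hypothesis, \emph{then} apply Lemma~\ref{lem:lift} once to $C'$ to obtain $D$, whose lifting property applies directly to the filling disk landing in $\Gamma_\chi\cdot\E p(C')$. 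Your argument is still correct because Lemma~\ref{lem:lift} makes $E'$ depend only on $E$ (hence only on $C$), but you should drop the superfluous first invocation and explain more carefully why the compact set doing the lifting must be the one matching the codomain of $\mu$.

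One more small point of hygiene: you sometimes phrase the choice of the enlarged compact set as if it depended on the particular sphere being represented ("choose $D\supseteq D_1$ so that $\tilde\eta$ becomes nullhomotopic"). Essential $(k-1)$-connectedness gives, for each stage, a single larger stage killing \emph{all} classes in $\pi_j$ for $j\le k-1$ at once, and Lemma~\ref{lem:lift} likewise produces a set depending only on the given compact. This is what you in fact need, and it is what the lemmas deliver; just be careful to state the quantifiers in that order.
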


\begin{proof}
(1) To show that $(\Gamma_\chi \cdot \E C)_{C\in\C(\Gamma)}$ is essentially $(k-1)$-connected, let $C \in \C(\Gamma)$. The hypothesis, together with Lemma~\ref{lem:reindexing},  yields $C'\in \C(\Gamma)_{\supseteq C}$ such that the inclusion $\Gamma_\chi \cdot \E p(C) \into \Gamma_\chi \cdot \E p(C')$ is $\pi_i$-trivial for all $i\le k-1$. Choose $D\in \C(\Gamma)_{\supseteq C'}$ satisfying the lifting property in Lemma~\ref{lem:lift} with respect to~$C'$. We claim that the inclusion $\Gamma_\chi \cdot \E C \into \Gamma_\chi \cdot \E D$ is $\pi_i$-trivial for all $i\le k-1$. To show this, we will make use of the combinatorial description of the homotopy groups~$\pi_i$ given by Lemma~\ref{lem:comb_pi_k}.
    
    Start with a simplicial map $\eta \colon \SD^m(\partial \Delta^{i+1}) \to \Gamma_\chi \cdot \E C$ from a subdivision of $\partial\Delta^{i+1}$. By choice of~$C'$, there are $m'\ge m$ and a map $\mu\colon \SD^{m'}(\Delta^{i+1}) \to \Gamma_\chi \cdot \E p(C')$ filling~$p \circ \eta$, that is, $\mu$~extends the composition 
    \[\SD^{m'}(\partial \Delta^{i+1}) \xrightarrow{\Phi^{m'-m}}\SD^m(\partial \Delta^{i+1}) \xrightarrow{\eta} \Gamma_\chi \cdot \E C \xrightarrow{p} \Gamma_\chi \cdot \E p(C).\]
    Since $\Delta^{i+1}$ has dimension $i+1\le k\le n$, we may apply the defining property of~$D$ to find a lift $\tilde \mu \colon \SD^{m''}(\Delta^{i+1}) \to \Gamma_\chi \cdot \E D$ of~$\mu$ extending $\eta \circ \Phi^{m''-m}$. This exhibits~$\eta$ as trivial in $\pi_i(\Gamma_\chi \cdot \E D)$.

    (2) Consider a compact subset of $\Gamma / \Lambda$, which is of the form~$p(C)$ for some $C\in\C (\Gamma)$ by Lemma~\ref{lem:reindexing}. Choose $D\in\C(\Gamma)_{\supseteq C}$ as given by Lemma~\ref{lem:lift}, and then use the hypothesis that $\chi \in \TopS^k(\Gamma)$ to find $D'\in \C(\Gamma)_{\supseteq D}$ such that the inclusion $\Gamma_\chi \cdot \E D \into \Gamma_\chi \cdot \E D'$ is $\pi_i$-trivial for all $i\le k-1$. We will show that the inclusion $\Gamma_\chi \cdot \E p(C) \into \Gamma_\chi \cdot \E p(D')$ is $\pi_i$-trivial for all $i\le k-1$.

    Given $\eta \colon \SD^m(\partial \Delta^{i+1}) \to \Gamma_\chi \cdot \E p(C)$, since $\partial \Delta^{i+1}$ has dimension $i\le k-1\le n$, we may apply the lifting property of~$D$ to the square
    \[\begin{tikzcd}
        \emptyset \ar[r] \ar[d,hook]& \Gamma_\chi \cdot \E C \ar[d,"p"]\\
        \SD^m(\partial \Delta^{i+1}) \ar[r,"\eta"]& \Gamma_\chi \cdot \E p(C)
    \end{tikzcd}.\]
    This yields a lift $\tilde \eta \colon \SD^{m'}(\partial \Delta^{i+1}) \to\Gamma_\chi \cdot \E D$. By definition of~$D'$, one may then find a filling $\mu \colon \SD^{m''}(\partial \Delta^i) \to \Gamma_\chi \cdot \E D'$, whose postcomposition with~$p$ shows that $\eta$~is nullhomotopic in $\Gamma_\chi \cdot \E p(D')$.
\end{proof}

\subsubsection{Homological lifting}\label{sec:homological_lifting}

The the material in Section~\ref{sec:homological_lifting} is adapted from and generalizes a theorem in the article on homological $\Sigma$-sets for locally compact groups \cite[Theorem~10.4]{BHQ24b}. Throughout, all chain complexes and homology modules will be taken over the coefficient ring~$R$, but for simplicity, we will suppress~$R$ from the notation.

Before tackling the main results of this section, we introduce a piece of notation. Given a vertex~$x$ and a $k$-simplex~$\sigma=(x_0, \ldots, x_k)$ of a free simplicial set~$\E X$, we define the $(k+1)$-simplex $$[x,\sigma]:=(x,x_0, \ldots, x_k).$$
This notation is extended $R$-linearly to all chains in the simplicial chain complex~$\Ch(\E X)$. Intuitively, given a $k$-chain~$c$, the $(k+1)$-chain $[x,c]$~may be visualized as a cone with base~$c$ and vertex~$x$.

\begin{lem}[Boundary of a cone]\label{lem:vertex_prepend}
Let $X$~be a set and $x\in X$. Then for every $k\in\NN$ and every $k$-chain $c\in \Ch_k(\E X)$, we have
    $$\partial [x,c] = c-[x,\partial c].$$
\end{lem}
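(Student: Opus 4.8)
The statement is purely combinatorial: it asserts the cone-boundary formula $\partial[x,c] = c - [x,\partial c]$ in the augmented simplicial chain complex $\Ch(\E X)$. The natural approach is to verify it by direct computation on a basis element $\sigma = (x_0,\dots,x_k)$ and then extend $R$-linearly, since both sides are $R$-linear in~$c$ by construction. So I would fix $k\in\NN$ and $\sigma=(x_0,\dots,x_k)$, write $[x,\sigma]=(x,x_0,\dots,x_k)$, and expand $\partial[x,\sigma] = \sum_{i=0}^{k+1}(-1)^i d_i(x,x_0,\dots,x_k)$ using the face-map formula (suppress the $i$-th entry).

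The key observation is to split that sum according to whether the deleted entry is the prepended vertex~$x$ (the $i=0$ term) or one of the original~$x_j$ (the $i\ge 1$ terms). The $i=0$ term is $d_0(x,x_0,\dots,x_k)=(x_0,\dots,x_k)=\sigma$, contributing~$+\sigma$. For $i\ge 1$, deleting the $i$-th entry of $(x,x_0,\dots,x_k)$ deletes $x_{i-1}$ from~$\sigma$, so $d_i(x,x_0,\dots,x_k)=(x,x_0,\dots,x_{i-2},x_i,\dots,x_k)=[x,d_{i-1}\sigma]$. Reindexing $j=i-1$, the tail of the sum becomes $\sum_{i=1}^{k+1}(-1)^i[x,d_{i-1}\sigma] = -\sum_{j=0}^{k}(-1)^j[x,d_j\sigma] = -[x,\partial\sigma]$, where the last equality uses $R$-linearity of $c\mapsto[x,c]$ and the definition $\partial\sigma=\sum_{j=0}^k(-1)^j d_j\sigma$. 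Combining, $\partial[x,\sigma]=\sigma-[x,\partial\sigma]$, which is the claim on basis elements; linearity gives it for all chains~$c$.

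I do not anticipate any real obstacle here — the only points needing a little care are the sign bookkeeping in the reindexing $i\mapsto i-1$ (the factor $(-1)^i = -(-1)^{i-1}$ is what produces the minus sign on the cone term) and the edge cases $k=0$ (where $\partial\sigma\in\Ch_{-1}$ is handled by the augmentation, but the formula $d_i(x,x_0)$ still makes sense and the computation goes through unchanged) and whether degeneracies cause any trouble (they do not, since the formula is about the chain complex, where degenerate simplices are simply additional basis elements and the face formulas are the same). A one-line remark that both sides are $R$-linear in~$c$, so it suffices to check basis elements, cleanly reduces the lemma to this short computation.
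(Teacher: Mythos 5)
Your proof is correct and follows the same route as the paper: reduce to basis elements by $R$-linearity and compute directly from the definition of the face maps. The paper simply states that it "suffices to prove this for $c=\sigma$ a simplex, in which case the statement is immediate"; your write-up makes explicit the sign bookkeeping and reindexing that the paper leaves to the reader.
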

\begin{proof}
    It suffices to prove this for $c=\sigma$ a simplex, in which case the statement is immediate from the definition of~$\partial$.
\end{proof}

\begin{lem}[Lifting along $\Lambda$~of type~$\mathrm {CP}_n(R)$]\label{lem:homological_lift}
    If $\Lambda$~is of type~$\mathrm{CP}_n(R)$, then for every $C\in \C(\Gamma)$, there exist $D\in \C(\Gamma)_{\supseteq C}$ and a map of $R$-chain complexes
    \[\varphi \colon \Ch(\Gamma_\chi \cdot \E p(C))^{(n)} \to \Ch(\Gamma_\chi\cdot \E D)\]
    extending $\mathrm{id} \colon R \to R$ and making the following diagram commute up to homotopy of $R$-chain complexes:
    \[\begin{tikzcd}
        \Ch(\Gamma_\chi \cdot \E C)^{(n-1)} \ar[r, hook] \ar[d, "p"]& \Ch(\Gamma_\chi\cdot \E D)\ar[d, "p"]\\
        \Ch(\Gamma_\chi \cdot \E p(C))^{(n)} \ar[r, hook] \ar[ur,"\varphi"]& \Ch(\Gamma_\chi\cdot \E p(D))
    \end{tikzcd}.\]
\end{lem}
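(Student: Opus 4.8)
The plan is to mimic the homotopical argument of Lemma~\ref{lem:lift}, but now at the level of augmented $R$-chain complexes, using the hypothesis that $\Lambda$ is of type $\mathrm{CP}_n(R)$ in place of type $\mathrm C_n$, and replacing ``$\pi_{n-1}$-trivial inclusions'' by ``inclusions that are zero on reduced homology $\redH_{n-1}$''. As in that proof, I would first invoke almost commutativity with compacts to pick $C' \in \C(\Gamma)$ with $C\Lambda \subseteq \Lambda C'$ and $C^{-1}\Lambda \subseteq \Lambda C'^{-1}$, and then construct a preliminary compact $D^\circ \in \C(\Gamma)_{\supseteq C}$ together with the chain map $\varphi$ by induction on the skeletal dimension. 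The auxiliary property that must be carried through the induction is a ``support control'' statement, analogous to $(\star)$: for each nondegenerate $k$-simplex $\sigma$ of $\Gamma_\chi \cdot \E p(C)$, if $\sigma$ lies in $g' \cdot \E p(C)$ for some $g' \in \Gamma_\chi$, then $\varphi(\sigma)$ is a chain supported on simplices lying in $g'\Lambda \cdot \E(C'C'^{-1}D^\circ)$.

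The base case ($0$-simplices, i.e.\ vertices of $\Gamma_\chi \cdot \E p(C)$) is handled exactly as in Lemma~\ref{lem:lift}: a vertex of $\Gamma_\chi \cdot \E p(C)$ is a coset $g\Lambda$ with $g \in \Gamma_\chi$ meeting some $\Gamma_\chi$-translate of $C$; send it to any vertex of $\Gamma_\chi \cdot \E C$ above it when possible, otherwise to any $p$-preimage, so that $\varphi$ extends the inclusion on $\Ch(\Gamma_\chi \cdot \E C)^{(-1)} = R$ with the identity in degree $-1$, and the support statement holds with $D^\circ := C'$. For the inductive step (going from the $(k{-}1)$-skeleton to the $k$-skeleton, $1 \le k \le n$): given a nondegenerate $k$-simplex $\sigma$ of $\Gamma_\chi \cdot \E p(C)$, choose $g \in \Gamma_\chi$ with $\sigma \subseteq g \cdot \E p(C)$. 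The chain $\partial(\varphi(\partial\sigma)) = \varphi(\partial^2\sigma) = 0$ shows $\varphi(\partial\sigma)$ is a cycle, and by the support statement for faces it is a cycle in $g\Lambda \cdot \E(C'C'^{-1}D^\circ_-)$, where $D^\circ_-$ is the compact produced at the previous inductive stage. Since $\Lambda$ is of type $\mathrm{CP}_n(R)$, Proposition~\ref{prop:proper_actions} applied to the proper left-multiplication action of $\Lambda$ on $\Gamma$ and the zero character gives, after enlarging $D^\circ_-$ to some $D^\circ \in \C(\Gamma)$, that the inclusion $\Lambda \cdot \E(C'C'^{-1}D^\circ_-) \hookrightarrow \Lambda \cdot \E D^\circ$ kills $\redH_{k-1}$; hence there is a $k$-chain $\beta_\sigma$ in $g\Lambda \cdot \E D^\circ$ (this uses $k \ge 1$, so $\redH_{k-1}$ is ordinary homology and genuine boundaries are available) with $\partial \beta_\sigma = \varphi(\partial\sigma)$, viewed inside $g\Lambda \cdot \E D^\circ$. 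One then declares $\varphi(\sigma) := \beta_\sigma$ if $\sigma \notin \Gamma_\chi \cdot \E C$, and $\varphi(\sigma) := \sigma$ (the same simplex, now regarded in $\Gamma_\chi \cdot \E D$) if $\sigma$ does lie in $\Gamma_\chi \cdot \E C$; finiteness is not an issue here because $\varphi$ is defined on all of $\Ch(\Gamma_\chi \cdot \E p(C))^{(n)}$ simplex-by-simplex and then extended $R$-linearly. Checking that this is a chain map and that the support statement propagates is the same bookkeeping as in Lemma~\ref{lem:lift}: one writes $g = g'x'hx^{-1} \in g'C\Lambda C^{-1}$ from $p(gC) \cap p(g'C) \neq \emptyset$ and chases $g'C\Lambda C^{-1}\Lambda \subseteq g'C\Lambda C'^{-1} \subseteq g'\Lambda C'C'^{-1}$ through the definition.

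Finally, to upgrade $D^\circ$ to a $D$ for which the square commutes \emph{up to chain homotopy}, I use almost commutativity once more to get $D \in \C(\Gamma)_{\supseteq D^\circ}$ with $\Lambda D^\circ \subseteq D\Lambda$, so that $p \circ \varphi$ lands in $\Ch(\Gamma_\chi \cdot \E p(D))$. The required chain homotopy $s$ between the inclusion $\Ch(\Gamma_\chi \cdot \E p(C))^{(n)} \hookrightarrow \Ch(\Gamma_\chi \cdot \E p(D))$ and $p \circ \varphi$ is built by the standard cone construction of Lemma~\ref{lem:vertex_prepend}: on a simplex $\sigma \subseteq g \cdot \E p(C)$, both $\sigma$ and $p(\varphi(\sigma))$ are supported in the free simplicial set $g \cdot \E p(D)$ (for $\sigma$ this is clear; for $p(\varphi(\sigma))$ it follows from the support statement together with $\Lambda D^\circ \subseteq D\Lambda$ and $p(D\Lambda) = p(D)$), and since $g \cdot \E p(D)$ is a free simplicial set on a set, it is $\operatorname{E}$ of that set, hence its augmented chain complex is contractible with an explicit contracting cone at any chosen base-vertex; assembling these cones compatibly over the faces (inductively in dimension, using $\partial[x,c] = c - [x,\partial c]$) yields $s$. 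The main obstacle I anticipate is precisely this last compatibility bookkeeping — ensuring the cone homotopies on different simplices agree on shared faces and are $\Gamma_\chi$-equivariantly supported in the right free pieces — together with the degree-$0$ subtlety that the homotopy must be compatible with the augmentation (i.e.\ $\varphi$ and the homotopy are maps of \emph{augmented} complexes extending $\mathrm{id}\colon R \to R$), which is why the base case must be set up to respect the augmentation from the start.
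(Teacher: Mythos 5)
Your proposal captures most of the paper's argument: the use of almost commutativity to produce $C'$ and later~$D$, the induction and the ``support control'' statement mirroring $(\star)$, the use of $\mathrm{CP}_n(R)$ through Proposition~\ref{prop:proper_actions} to fill cycles in $\Lambda$-translates, and the cone construction for the chain homotopy of the bottom triangle. But there is a genuine gap: the diagram in the statement asks for \emph{both} triangles to commute up to chain homotopy, and you only address one of them. Specifically, you construct a homotopy $s$ between the inclusion $\Ch(\Gamma_\chi\cdot\E p(C))^{(n)}\hookrightarrow\Ch(\Gamma_\chi\cdot\E p(D))$ and $p\circ\varphi$ (the lower-right triangle), but you never produce the chain homotopy witnessing that $\varphi\circ p$ agrees up to homotopy with the inclusion $\Ch(\Gamma_\chi\cdot\E C)^{(n-1)}\hookrightarrow\Ch(\Gamma_\chi\cdot\E D)$ (the upper-left triangle). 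The paper constructs this second homotopy~$h$ explicitly, by the same ``fill a cycle supported in $g\Lambda\cdot\E D_-$'' technique used to extend $\varphi$: for each $(n-1)$-simplex $\tau$ of $\Gamma_\chi\cdot\E C$, the chain $\varphi_{n-1}(p(\tau))-\tau-h_{n-2}(\partial\tau)$ is a cycle supported in $g\Lambda\cdot\E D_-$, so it bounds in $g\Lambda\cdot\E D^\circ$ by choice of $D^\circ$, and one sets $h_{n-1}(\tau)$ to be a filling. This homotopy is not decorative: it is used essentially in the proof of Proposition~\ref{prop:sigmaofquotient_homological}(1), where one needs the triangle formed by $\varphi$ and the two $p$'s to commute up to chain homotopy in order to factor the inclusion $\Ch(\Gamma_\chi\cdot\E C)^{(n-1)}\hookrightarrow\Ch(\Gamma_\chi\cdot\E D)$ through the lower row.

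Your attempt to sidestep this by declaring ``$\varphi(\sigma):=\sigma$ if $\sigma$ lies in $\Gamma_\chi\cdot\E C$'' does not work: $\Gamma_\chi\cdot\E p(C)$ is a simplicial subset of $\E(\Gamma/\Lambda)$ while $\Gamma_\chi\cdot\E C$ is a simplicial subset of $\E\Gamma$, so the condition is vacuous, and even replacing it by ``$\sigma\in p(\Gamma_\chi\cdot\E C)$'' leaves $\varphi$ ill-defined because $p$ is many-to-one. There is no way to arrange the upper triangle to commute on the nose; one really must construct the homotopy~$h$, carry the corresponding support bound through the $(\star)$-induction, and only then one has the statement of the lemma. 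Finally, a small remark on your argument's organisation: it is cleaner (and is what the paper does) to run the induction on the truncation level $n$ starting at $n=-1$, so that the augmentation degree $\Ch_{-1}=R$ is handled uniformly with the other degrees, rather than singling out degree $0$ as a separate base case.
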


\begin{proof}
    Since $\Lambda$~almost commutes with compacts, choose $C'\in \C(\Gamma)_{\supseteq C}$ satisfying $C\Lambda\subseteq \Lambda C'$ and $C^{-1}\Lambda \subseteq \Lambda C'^{-1}$.
    We will use induction on~$n$ to construct the set~$D$, the map~$\varphi$, and homotopies~$h, j$ witnessing, respectively, commutativity of the left and right triangle. Concurrently, we will establish the following auxiliary properties:

    \begin{itemize}
		\item[($\star$)]        
        For every $g'\in \Gamma_\chi$, we have:
        \begin{enumerate}
            \item $\varphi_n$ maps $\Ch_n(g'\cdot\E p(C))$ into $\Ch_n(g'\Lambda \cdot \E D)$,
            \item $h_{n-1}$ maps $\Ch_{n-1}(g'\cdot \E C)$ into $\Ch_n(g'\Lambda \cdot \E D)$, and
        \item $j_n$ maps $\Ch_n(g'\cdot\E p(C))$ into $\Ch_{n+1}(g' \cdot\E p(D))$.
        \end{enumerate}
	\end{itemize}
    
    We start the induction at $n=-1$, setting $D:=C$, taking $\varphi_{-1} \colon R \to R$ to be the identity (as we must), and $j_{-1}$ to be the zero map. All other maps of $\varphi$, $h$, and $j$ are necessarily zero. Obviously $\varphi$~is a chain map, $h$~and~$j$ witness commutativity of the respective triangles (the former being a vacuous statement), and $(\star)$~is satisfied.

    Suppose now that $n\ge 0$, and that we have already defined $D_-\in\C(\Gamma)_{\supseteq C}$ and constructed a chain map $\varphi\colon \Ch(\Gamma_\chi \cdot \E p(C))^{(n-1)} \to \Ch(\Gamma_\chi\cdot \E D_-)$ and chain homotopies~$h,j$ witnessing commutativity of the triangles, with $(\star)$~being satisfied.

    Considering the (proper) left action of~$\Lambda$ on~$\Gamma$, we use the $\chi=0$ case of Proposition~\ref{prop:proper_actions} to characterize property~$\mathrm {CP}_n(R)$ on~$\Lambda$ as essential triviality of the directed system of reduced homology groups $(\redH_{k}(\Ch (\Lambda\cdot \E K)))_{K\in \C(\Gamma)}$ for all $k\le n-1$. So let $D^\circ \in \C(\Gamma)_{\supseteq D_-}$ be large enough that the inclusion-induced map 
    \[ \redH_{n-1}(\Ch (\Lambda \cdot \E D_-)) \to \redH_{n-1}(\Ch(\Lambda\cdot \E D^\circ))\]
    is trivial. Using  the assumption that $\Lambda$~almost commutes with compacts, we define the desired set $D\in \C(\Gamma)_{\supseteq C'C'^{-1}D^\circ}$ to be such that $\Lambda C'C'^{–1}D^\circ\subseteq D\Lambda$.
    
    We extend~$\varphi$ to the $n$-skeleton~$\Ch(\Gamma_\chi \cdot \E p(C))^{(n)}$ by defining a new $R$-module map
    $$\varphi_n \colon \Ch_n(\Gamma_\chi \cdot \E p(C))\to \Ch_n(\Gamma_\chi\cdot \E D^\circ),$$
    and then postcomposing all maps $\varphi_k$ with the inclusions into $\Ch_k(\Gamma_\chi \cdot \E D)$.
    To define $\varphi_n$ on an $n$-simplex~$\sigma$ of $\Gamma_\chi \cdot \E p(C)$,  choose $g\in \Gamma_\chi$ such that $\sigma$~lies in $g\cdot \E p(C)$. Since $\partial \sigma \in \Ch_{n-1}(g\cdot\E p(C))$, we see from $(\star)$ that
    \[\varphi_{n-1}(\partial\sigma)\in \Ch_{n-1}(g \Lambda\cdot \E D_-).\]
    Now, $\varphi_{n-1}(\partial \sigma)$ is a cycle of this $g$-translated chain complex (for $\partial(\varphi_{n-1}(\partial \sigma)) = \varphi_{n-1}(\partial^2\sigma) =0$), so by our choice of~$D^\circ$, there is an $n$-chain $c\in \Ch_n(g\Lambda\cdot \E D^\circ)$ with $\partial c = \varphi_{n-1}(\partial \sigma)$. We put $\varphi_n(\sigma) := c$, and as the $n$-simplices form a free $R$-basis, this defines an $R$-module map~$\varphi_n$. It is straightforward from the construction that $\varphi_n$ extends $\varphi$ to a chain map.

    To prove the first claim of~$(\star)$, let $g'\in \Gamma_\chi$ be such that $\sigma \in g'\cdot \E p(C)$ (possibly not the same as $g$~from the previous paragraph). Writing the $0$-th vertex of~$\sigma$ as $g\cdot p(x) =g'\cdot p(x')$ with $x,x'\in C$, we see that for some $l\in \Lambda$, we have $g = g'x'lx^{-1}$, whence
    \begin{align*}
        \varphi_n(\sigma) &\in \Ch_n(g'x'lx^{-1}\Lambda\cdot \E D^\circ)\\
        &\subseteq \Ch_n(g'\Lambda C'C'^{-1}\cdot \E D^\circ)\\
        &\subseteq \Ch_n(g'\Lambda \cdot \E (C'C'^{-1}D^\circ))\\
        &\subseteq \Ch_n(g'\Lambda \cdot \E D).
    \end{align*}

    Next, we redefine the zero map~$h_{n-1}$ of the given~$h$. For each $(n-1)$-simplex~$\tau$ of $\Gamma_\chi \cdot\E C$, choose $g\in \Gamma_\chi$ such that $\tau \in g\cdot \E C$. Using $(\star)$, we see by inspecting each summand that the chain
    \[z:= \varphi_{n-1} (p(\tau))-\tau - h_{n-2}(\partial \tau)\] 
    lies in~$\Ch_{n-1}(g\Lambda\cdot \E D_-)$.
    A straightforward computation then shows $\partial z=0$, whence there is $c\in \Ch_n(g\Lambda\cdot \E D^\circ)$ with $\partial c =z$. Putting $h_{n-1}(\tau) := c$, it follows immediately that the new~$h$ is indeed a chain homotopy as required.

    As before, an expression of~$\tau$ using a different $g'\in \Gamma_\chi$ yields $g\in g'CC^{-1}$, and again we obtain
    \begin{align*}
        h_{n-1}(\tau) & \in\Ch_n(g'CC^{-1}\Lambda\cdot \E D^\circ) \\
        &\subseteq \Ch_n(g'\Lambda\cdot \E (C'C'^{-1}D^\circ))\\
        &\subseteq \Ch_n(g'\Lambda\cdot \E D),
    \end{align*}
    establishing the second condition of~$(\star)$.

    Lastly, we explain how to modify the zero map~$j_n$ in the given chain homotopy~$j$. For each $n$-simplex $\sigma$ of $\Gamma_\chi\cdot \E p(C)$, denote its $0$-th vertex by~$x$. We shall define
    \[j_n(\sigma):=[x, p(\varphi(\sigma))-\sigma - j_{n-1}(\partial \sigma)],\]
    a formula which requires some explanation -- first and foremost as to why the expression on the right hand side lies in $\Ch_{n+1}(\Gamma_\chi \cdot \E p(D))$, since a priori, it is only clear that it lies in $\Ch_{n+1}(\E(\Gamma/\Lambda))$. That is however immediate once we establish the third statement in~$(\star)$, so let us do that right away.

    Let $g'\in \Gamma_\chi$ be such that $\sigma$~lies in~$g'\cdot \E p(C)$. Since $g'\cdot \E p(D)$~is a free simplicial set containing~$x$, we need only show that all three summands $p(\varphi(\sigma))$, $\sigma$, and $j_{n-1}(\partial \sigma)$ lie in $\Ch_{n}(g'\cdot \E p(D))$. For $\sigma$, this is clear, and for $j_{n-1}(\partial \sigma)$ it follows from the case $n-1$ of~$(\star)$. Finally, recall that earlier, when verifying the first condition of~$(\star)$, we actually proved that
    \begin{align*}
        \varphi_n(\sigma) &\in \Ch_n(g'\Lambda \cdot \E (C'C'^{–1}D^\circ))\\
        &\subseteq \Ch_n(g' \cdot \E (\Lambda C'C'^{–1}D^\circ))\\
        &\subseteq \Ch_n(g' \cdot \E (D\Lambda)).
    \end{align*}
    Therefore, $p(\varphi_n(\sigma)) \in \Ch_n(g'\cdot \E p(D))$, as claimed.

    All that is left to do is show that the new~$j$ is a chain homotopy from the inclusion to $p\circ \varphi$.
    And indeed, by expanding the first summand using Lemma~\ref{lem:vertex_prepend} and doing some straightforward manipulations, we see
    \begin{align*}
    \partial j_n(\sigma) + j_{n-1}(\partial \sigma) &= p(\varphi(\sigma))- \sigma - [x, \partial p  (\varphi(\sigma)) - \partial\sigma - \partial j_{n-1} (\partial \sigma)]\\
    &= p(\varphi(\sigma))- \sigma - [x,  p  (\varphi(\partial\sigma)) - \partial\sigma - \partial j_{n-1} (\partial \sigma)].
    \end{align*}
    The chain homotopy formula one dimension below, $\partial j_{n-1} + j_{n-2}\partial = p \circ \varphi_{n-1} - \operatorname{id}$, which we are inductively assuming, shows that the expression in brackets equals $j_{n-2}(\partial^2\sigma) = 0$.
    \end{proof}

We can now give a homological counterpart to Proposition~\ref{prop:sigmaofquotient}.

\begin{prop}[$\TopS^k(\Gamma;R)$ via $\Gamma / \Lambda$]\label{prop:sigmaofquotient_homological}
If $\Lambda$ is of type~$\mathrm{CP}_n(R)$, then:
    \begin{enumerate}
    \item For every $k\le n$, if the filtration $(\Gamma_\chi \cdot \E F)_{F\in\C(\Gamma /\Lambda)}$ is essentially $(k-1)$-acyclic over~$R$, then $\chi \in \TopS^k(\Gamma;R)$.

    \item For every $k\le n+1$, if $\chi\in \TopS^{k}(\Gamma;R)$, then  $(\Gamma_\chi \cdot \E F)_{F\in\C(\Gamma/\Lambda)}$ is essentially $(k-1)$-acyclic over~$R$.
     
    \end{enumerate}
\end{prop}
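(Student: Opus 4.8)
The plan is to follow, almost verbatim, the proof of Proposition~\ref{prop:sigmaofquotient}, performing three substitutions: the homotopical lifting lemma (Lemma~\ref{lem:lift}) is replaced by its homological counterpart (Lemma~\ref{lem:homological_lift}); the combinatorial description of homotopy groups (Lemma~\ref{lem:comb_pi_k}) is replaced by the tautological fact that reduced homology of a simplicial set is computed by its augmented simplicial chain complex; and ``nullhomotopic'' is replaced by ``a boundary''. As in Section~\ref{sec:homological_lifting}, all chain complexes and homology are taken over~$R$ with $R$~suppressed, and we call an inclusion \emph{$\redH_i$-trivial} if the induced map on $i$-th reduced homology vanishes.

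For item~(1), fix $C\in\C(\Gamma)$. By essential $(k-1)$-acyclicity of $(\Gamma_\chi\cdot\E F)_{F\in\C(\Gamma/\Lambda)}$ and Lemma~\ref{lem:reindexing}, choose $C'\in\C(\Gamma)_{\supseteq C}$ such that $\Gamma_\chi\cdot\E p(C)\into\Gamma_\chi\cdot\E p(C')$ is $\redH_i$-trivial for all $i\le k-1$, and apply Lemma~\ref{lem:homological_lift} to~$C'$ to obtain $D\in\C(\Gamma)_{\supseteq C'}$, a chain map $\varphi\colon\Ch(\Gamma_\chi\cdot\E p(C'))^{(n)}\to\Ch(\Gamma_\chi\cdot\E D)$ extending $\operatorname{id}\colon R\to R$, and a chain homotopy~$h$ from the inclusion $\Ch(\Gamma_\chi\cdot\E C')^{(n-1)}\into\Ch(\Gamma_\chi\cdot\E D)$ to $\varphi\circ p$. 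One then checks that $\Gamma_\chi\cdot\E C\into\Gamma_\chi\cdot\E D$ is $\redH_i$-trivial for all $i\le k-1$: given a reduced $i$-cycle $z\in\Ch_i(\Gamma_\chi\cdot\E C)$, its projection $p(z)$ is a reduced $i$-cycle of $\Ch(\Gamma_\chi\cdot\E p(C))$, so by the choice of~$C'$ there is $w\in\Ch_{i+1}(\Gamma_\chi\cdot\E p(C'))$ with $\partial w=p(z)$; since $i+1\le k\le n$, the chain~$w$ lies in the domain of~$\varphi$, whence $\partial\varphi(w)=\varphi(p(z))$, while the homotopy identity gives $z-\varphi(p(z))=\partial h(z)$ (using $\partial z=0$ and $i\le n-1$), so $z=\partial(\varphi(w)+h(z))$ in $\Ch(\Gamma_\chi\cdot\E D)$. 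As every class of $\redH_i$ is represented by such a cycle, the filtration $(\Gamma_\chi\cdot\E C)_{C\in\C(\Gamma)}$ is essentially $(k-1)$-acyclic, i.e.\ $\chi\in\TopS^k(\Gamma;R)$.

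For item~(2), take a compact subset of $\Gamma/\Lambda$, which by Lemma~\ref{lem:reindexing} is of the form~$p(C)$, and apply Lemma~\ref{lem:homological_lift} to~$C$, obtaining $D\in\C(\Gamma)_{\supseteq C}$, the chain map~$\varphi$, and a chain homotopy~$j$ from the inclusion $\Ch(\Gamma_\chi\cdot\E p(C))^{(n)}\into\Ch(\Gamma_\chi\cdot\E p(D))$ to $p\circ\varphi$. Using $\chi\in\TopS^k(\Gamma;R)$, choose $D'\in\C(\Gamma)_{\supseteq D}$ with $\Gamma_\chi\cdot\E D\into\Gamma_\chi\cdot\E D'$ being $\redH_i$-trivial for all $i\le k-1$. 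I claim $\Gamma_\chi\cdot\E p(C)\into\Gamma_\chi\cdot\E p(D')$ is then $\redH_i$-trivial for all $i\le k-1$: for a reduced $i$-cycle $z\in\Ch_i(\Gamma_\chi\cdot\E p(C))$ with $i\le k-1\le n$ (so $z$ lies in the domain of both~$\varphi$ and~$j$), the chain $\varphi(z)$ is a reduced $i$-cycle of $\Ch(\Gamma_\chi\cdot\E D)$, so by the choice of~$D'$ there is $w\in\Ch_{i+1}(\Gamma_\chi\cdot\E D')$ with $\partial w=\varphi(z)$ (the bound $i+1\le k\le n+1$ causing no trouble here); applying the projection and the homotopy identity $z-p(\varphi(z))=\partial j(z)$ yields $z=\partial\bigl(p(w)+j(z)\bigr)$ in $\Ch(\Gamma_\chi\cdot\E p(D'))$. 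Since $p(C)$ was arbitrary, the filtration $(\Gamma_\chi\cdot\E F)_{F\in\C(\Gamma/\Lambda)}$ is essentially $(k-1)$-acyclic over~$R$.

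The only point requiring care --- and the source of the asymmetric bounds $k\le n$ in~(1) versus $k\le n+1$ in~(2) --- is that $\varphi$ exists only on the $n$-skeleton and the homotopies~$h,j$ only in the corresponding range; but this bookkeeping has already been absorbed into Lemma~\ref{lem:homological_lift}, so no genuine obstacle remains: the argument is a routine diagram chase with augmented chain complexes, exactly parallel to the homotopical case.
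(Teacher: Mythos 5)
Your proof is correct and follows essentially the same approach as the paper's: both parts hinge on the homological lifting lemma (Lemma~\ref{lem:homological_lift}) together with the re-indexing Lemma~\ref{lem:reindexing}, and the structure of items~(1) and~(2) matches the paper's argument exactly. The only stylistic difference is that you perform the verification as an explicit cycle chase (writing a given reduced $i$-cycle as a boundary via the chain map~$\varphi$ and the chain homotopies~$h$,~$j$), whereas the paper phrases the same reasoning as a diagram chase at the level of chain complexes, exploiting that the inclusion of a truncated complex $\Ch(-)^{(m)}\into\Ch(-)$ is surjective on $\redH_i$ for $i\le m$.
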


\begin{proof} 
    (1) To show that $(\Gamma_\chi\cdot \E C)_{C\in \C(\Gamma)}$ is essentially $(k-1)$-acyclic, let $C\in \C(\Gamma)$. Using the hypothesis, together with Lemma~\ref{lem:reindexing}, let $C'\in \C(\Gamma)_{\supseteq C}$~be such that for all $i\le k-1$ the inclusion-induced map
    $\redH_{i}(\Gamma_\chi\cdot \E p(C)) \to \redH_{i}(\Gamma_\chi\cdot \E p(C'))$
    is trivial. Then, apply Lemma~\ref{lem:homological_lift} to produce $D\in \C(\Gamma)_{\supseteq C'}$ and $\varphi\colon \Ch(\Gamma_\chi \cdot\E p(C'))^{(n)} \to \Ch(\Gamma_\chi \cdot \E D)$ with the properties stated therein. We will show that the inclusion-induced map $\redH_i(\Gamma_\chi \cdot \E C)\to \redH_i(\Gamma_\chi \cdot \E D)$ is trivial.

    Consider the diagram of chain complexes
    \[\begin{tikzcd}
        \Ch(\Gamma_\chi \cdot \E C)^{(n-1)} \ar[r,hook] \ar[d,"p"] & \Ch(\Gamma_\chi \cdot \E C')^{(n-1)} \ar[r,hook] \ar[d,"p"] & \Ch(\Gamma_\chi \cdot \E D) \\
        \Ch(\Gamma_\chi \cdot \E p(C))^{(n)} \ar[r,hook] & \Ch(\Gamma_\chi \cdot \E p(C'))^{(n)} \ar[ru,"\varphi"']
    \end{tikzcd},\]
    where the square is commutative and the triangle commutes up to chain homotopy.
    For all $i\le k-1$, the lower horizontal map is trivial on $i$-th homology, hence so is the composition of the two upper maps. This composition can however also be factored as
    \[ \Ch(\Gamma_\chi \cdot \E C)^{(n-1)} \hookrightarrow \Ch(\Gamma_\chi \cdot \E C) \hookrightarrow \Ch(\Gamma_\chi \cdot \E D).\]
    Since $i\le k-1 \le n-1$, the first map is surjective on $i$-th homology. Therefore, the second map is trivial on $i$-th homology, as required.

    (2)
    We start with a compact subset of~$\Gamma / \Lambda$, which, by Lemma~\ref{lem:reindexing},
    may be assumed to be of the form~$p(C)$ with $C\in \C(\Gamma)$. Let $D\in \C(\Gamma)_{\supseteq C}$ and $\varphi \colon \Ch (\Gamma_\chi \cdot \E p(C))^{(n)} \to \Ch (\Gamma_\chi \cdot \E D)$ be as in Lemma~\ref{lem:homological_lift}. Then, choose $D'\in \C(\Gamma)_{\supseteq D}$ such that for all $i\le k-1$ the map $\redH_i(\Gamma_\chi \cdot \E D) \to \redH_i(\Gamma_\chi \cdot \E D')$ is trivial. We claim that the map $\redH_i(\Gamma_\chi \cdot \E p(C)) \to \redH_i(\Gamma_\chi \cdot \E p(D))$ is trivial for all $i\le k-1$.
    
    Consider the following diagram, which is commutative up to chain homotopy:
    \[\begin{tikzcd}
        & \Ch(\Gamma_\chi \cdot \E D) \ar[r,hook] \ar[d,"p"] & \Ch(\Gamma_\chi \cdot \E D') \ar[d,"p"] \\
        \Ch(\Gamma_\chi \cdot \E p(C))^{(n)} \ar[ru,"\varphi"] \ar[r,hook] & \Ch(\Gamma_\chi \cdot \E p(D)) \ar[r,hook]&  \Ch(\Gamma_\chi \cdot \E p(D')) 
    \end{tikzcd}.\]
    For all $i\le k-1$, the upper horizontal map is trivial on $i$-the homology, and thus so is the composition of the lower maps. This composition factors as
    \[ \Ch(\Gamma_\chi \cdot \E p(C))^{(n)} \hookrightarrow \Ch(\Gamma_\chi \cdot \E p(C)) \hookrightarrow \Ch(\Gamma_\chi \cdot \E p(D')),\]
    and since $i \le k-1 \le n$, the first map is surjective on $i$-th homology. We thus conclude the second map is trivial on $i$-th homology, as desired.
\end{proof}

\subsection{From $\Gamma / \Lambda$ to $\Gamma \slcpt\Lambda$}\label{sec:part2}

For this section, we introduce the assumption that the closed commensurated subgroup $\Lambda \subseteq \Gamma$ is open, so $(\Gamma, \Lambda)$~is a Hecke pair and we may define the Schlichting completion $\alpha \colon \Gamma \to \Gamma\slcpt\Lambda =:\G$. Recall that $\L:=\overline{\alpha(\Lambda)}$~is a compact open subgroup of~$\G$.

\begin{prop}[Quotients and Schlichting completions]\label{prop:quotient_to_slcpt}
    The filtrations
    \[(\Gamma_\chi\cdot \E C)_{C\in \C(\G)} \quad\text{and} \quad (\Gamma_\chi\cdot \E F)_{F\in \C(\Gamma /\Lambda)}\]
    of  $\E \G$ and  $\E(\Gamma / \Lambda)$, respectively, are simplicially homotopy-equivalent.
\end{prop}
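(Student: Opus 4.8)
The plan is to build the simplicial homotopy equivalence directly from the evaluation map $\ev\colon\G\to\Gamma/\Lambda$ and a set-theoretic section of it. Recall from Section~\ref{sec:schlichting} that $\ev$ is a continuous, $\Gamma$-equivariant surjection sending $\alpha(g)$ to $g\Lambda$, and that, since $\Lambda$ is open, $\C(\Gamma/\Lambda)$ is just the poset of finite subsets of $\Gamma/\Lambda$. Picking a representative of each coset yields a section $s\colon\Gamma/\Lambda\to\G$ of $\ev$, which we may take to satisfy $s(g\Lambda)=\alpha(g)$ on the chosen representatives; note that $s$ need not be $\Gamma$-equivariant, and this is the only source of difficulty below.

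First I would record the ``downward'' data. Set $\phi\colon\C(\G)\to\C(\Gamma/\Lambda)$, $\phi(C):=\ev(C)$, which is a finite set because $\ev$ is locally constant; equivariance of $\ev$ shows that $\E\ev$ restricts to maps $f_C\colon\Gamma_\chi\cdot\E C\to\Gamma_\chi\cdot\E\phi(C)$, and since these are all restrictions of the single simplicial map $\E\ev$, the family $(f_C)_C$ commutes strictly (not merely up to homotopy). For the ``upward'' data, the crucial observation is the following estimate, which compensates for the non-equivariance of $s$: for every $g\in\Gamma$ and $w\in\Gamma/\Lambda$ one has $s(g\cdot w)\in\alpha(g)\cdot s(w)\cdot\L$. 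Indeed, writing $s(w)=\alpha(u)$ and $s(g\cdot w)=\alpha(c)$ for the chosen representatives, we have $c\in gu\Lambda$, so $c=gu\mu$ with $\mu\in\Lambda$, and hence $\alpha(c)=\alpha(g)\,\alpha(u)\,\alpha(\mu)\in\alpha(g)\cdot s(w)\cdot\L$, using $\alpha(\Lambda)\subseteq\overline{\alpha(\Lambda)}=\L$. Consequently $\psi(F):=s(F)\cdot\L$ is compact (a finite union of cosets of the compact group $\L$), defines a poset map $\psi\colon\C(\Gamma/\Lambda)\to\C(\G)$, and, in view of the estimate, $\E s$ restricts to maps $g_F\colon\Gamma_\chi\cdot\E F\to\Gamma_\chi\cdot\E\psi(F)$; again the $g_F$ commute strictly.

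It then remains to check the two round-trip conditions of Definition~\ref{dfn:hoequiv}. For the composite $\Gamma/\Lambda\to\G\to\Gamma/\Lambda$ everything is strict: since $\L=\overline{\alpha(\Lambda)}$ is contained in the stabilizer of the coset $\Lambda$, one computes $\phi(\psi(F))=\ev(s(F)\cdot\L)=F$, and $f_{\psi(F)}\circ g_F$ is the restriction of $\E(\ev\circ s)=\E(\mathrm{id})$, that is, the identity of $\Gamma_\chi\cdot\E F$, so the relevant square commutes with $\beta'=F$. The only place where a genuine homotopy is needed is the composite $\G\to\Gamma/\Lambda\to\G$: here $g_{\phi(C)}\circ f_C$ is the restriction of $\E(s\circ\ev)$, and I would connect it to the inclusion $\Gamma_\chi\cdot\E C\hookrightarrow\Gamma_\chi\cdot\E C'$, where $C':=C\cup\psi(\phi(C))\supseteq\psi(\phi(C))$, by the unique simplicial homotopy (unique because the codomain sits inside the free simplicial set $\E\G$) sending a $k$-simplex $\sigma$ with vertices $x_0,\dots,x_k$ and $\tau_i^k$ to $(x_0,\dots,x_{i-1},s(\ev(x_i)),\dots,s(\ev(x_k)))$. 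The substance of the verification is that this tuple lies in $\Gamma_\chi\cdot\E C'$: if all $x_j$ lie in $\alpha(h)\cdot C$ for some $h\in\Gamma_\chi$, then $x_j\in\alpha(h)\cdot C'$, while $\ev(x_j)=h\cdot\ev(c_j)$ for suitable $c_j\in C$, so the estimate above gives $s(\ev(x_j))\in\alpha(h)\cdot s(\ev(C))\cdot\L=\alpha(h)\cdot\psi(\phi(C))\subseteq\alpha(h)\cdot C'$. With this, $(\phi,(f_C)_C)$ and $(\psi,(g_F)_F)$ are mutually inverse simplicial homotopy equivalences.

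The main obstacle is precisely the bookkeeping forced by $s$ failing to be equivariant: one must prove the estimate $s(g\cdot w)\in\alpha(g)\cdot s(w)\cdot\L$ and then use it, first to keep the image of $\E s$ inside the stage $\psi(F)$ of the filtration of $\E\G$, and again to keep the homotopy above inside $\Gamma_\chi\cdot\E C'$. Every other diagram in Definition~\ref{dfn:hoequiv} then commutes either on the nose or via that evident simplicial homotopy.
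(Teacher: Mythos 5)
Your proof is correct and follows essentially the same route as the paper's: both use the evaluation map $\ev$ together with a coset-representative section (your $s$, the paper's $g\Lambda\mapsto\alpha([g])$), define $\psi(F)=s(F)\cdot\L$, observe that $\varphi\circ\psi$ is the identity on the nose, and handle the other composite via the unique simplicial homotopy landing in the free simplicial set on the enlarged compact $C\cup\psi(\phi(C))$. Your isolation of the equivariance-defect estimate $s(g\cdot w)\in\alpha(g)\cdot s(w)\cdot\L$ as a stand-alone computation is a clean organizational choice, but the mathematical content coincides with the in-line coset manipulations in the paper's proof.
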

    
\begin{proof}
    Recall the $\Gamma$-equivariant continuous map $\ev\colon \G\to \Gamma/\Lambda$. We claim that its induced map $\varphi\colon\E \G \to E(\Gamma / \Lambda)$ restricts, for each $C\in\C(\G)$,  to a simplicial map
    \[\varphi_C\colon \Gamma_\chi \cdot \E C  \to \Gamma_\chi \cdot \E (\ev(C)),\]
    and these~$\varphi_C$ assemble to a homotopy equivalence between the filtrations in the statement.

    Note first that since $\ev$~is continuous, each set~$\ev(C)$ is indeed compact. To see that the image of $\Gamma_\chi\cdot \E C$ does indeed lie in $\Gamma_\chi\cdot \E (\ev (C))$, consider a simplex $g\cdot (\xi_i)_{i=0}^k$ of $\Gamma_\chi \cdot \E C$ (with $g\in \Gamma_\chi$ and $\xi_i\in C$). Its image is the simplex $g\cdot (\ev(\xi_i))_{i=0}^k$, which lies in $\Gamma_\chi\cdot\E (\ev(C))$ as required. Note also that since the~$\varphi_C$ were defined as restrictions of a map on all of $\E \G$, they automatically commute with the inclusion maps of the filtrations.

    To construct a homotopy inverse $(\psi_F)_{F\in \C(\Gamma / \Lambda)}$ to~$(\varphi_C)_{C\in \C (\G)}$,  we first choose once and for all a representative of each coset in~$\Gamma/\Lambda$. We shall denote the representative of~$g\Lambda$ by~$[g\Lambda]$, or simply~$[g]$, and given a subset $F\subseteq \Gamma / \Lambda$, we write $[F]:=\{[g]\in \Gamma \mid g\Lambda \in F\}$.
    
    Consider the (generally not continuous or $\Gamma$-equivariant) map of sets 
    \begin{align*}
        \Gamma / \Lambda &\to \G\\
        g\Lambda & \mapsto \alpha([g]),
    \end{align*}
    which induces a simplicial map $\psi \colon \E(\Gamma / \Lambda) \to \E \G$.
    We claim that for each $F\in \C(\Gamma /\Lambda)$, the map~$\psi$ restricts to a map of simplicial sets
    \[\psi_F \colon \Gamma_\chi \cdot \E F \to \Gamma_\chi \cdot \E(\alpha([F]) \L),\]
    and that this family $(\psi_F)_{F\in\C(\Gamma / \Lambda)}$ forms a simplicial homotopy inverse to $(\varphi_C)_{C\in \C(\G)}$.

    First note that since $\Gamma/\Lambda$~is discrete, each $F\in \C(\Gamma / \Lambda)$ is finite, and thus also $\alpha([F])$ is finite, whence $\alpha([F])\L \subseteq \G$ is compact.
    Let us check that each simplex $g\cdot(g_i\Lambda)_{i=0}^k$ of $\Gamma_\chi\cdot \E F$ (with $g\in \Gamma_\chi$ and $g_i\Lambda \in F)$ is indeed mapped to a simplex of $\Gamma_\chi \cdot \E (\alpha([F]) \L)$. 
    We have
    \begin{align*}
        \psi_F(g\cdot(g_i\Lambda)_{i}) & = \psi_F((gg_i\Lambda)_i)\\
        &=(\alpha([gg_i]))_i\\
        &= (\alpha(gg_ih_i))_i & \text{for some $h_i\in \Lambda$}\\
        &=(\alpha(g[g_i]h_i'h_i) )_i& \text{for some $h_i, h_i'\in \Lambda$}\\
        &=g\cdot (\alpha([g_i])\alpha (h_i'h_i) )_i & \text{for some $h_i, h_i'\in \Lambda$}&,
        \end{align*}
which lies in~$\Gamma_\chi \cdot \E (\alpha([F]) \L)$.

 As before, the~$\psi_F$ obviously commute with all inclusion maps.
To see that they assemble to a homotopy inverse to the~$\varphi_C$, note first that the composition $\varphi \circ \psi$~is sends $g\Lambda\mapsto [g] \mapsto \ev([g]) = [g]\cdot \Lambda = g\Lambda$, so it is the identity. Hence the same is true for the restriciton to each $\Gamma_\chi \cdot \E F$.

Now let $C\in \C(\Gamma)$ and consider effect of the other composition on a simplex $g\cdot (\xi_i)_{i=0}^k$. We have:
    $$\begin{array}{ccccc}
        \Gamma_\chi \cdot \E C & \xrightarrow{\varphi_C} & \Gamma_\chi \cdot \E (\ev(C)) &\xrightarrow{\psi_{\ev(C)}}& \Gamma_\chi \cdot \E (\alpha([\ev(C)])\L)\\
        g\cdot(\xi_i)_i & \mapsto&  g\cdot(\xi_i(\Lambda))_i &\mapsto & g\cdot (\alpha([\xi_i(\Lambda)]) \alpha(h_i'h_i))_i
    \end{array}$$
    for some $h_i, h_i' \in \Lambda$.
    Both the given simplex and its image lie in the free simplicial set $g\cdot \E D$, where $D:=C\cup\alpha([\ev(C)])\L$. Thus, the (unique) simplicial homotopy from the inclusion $\iota_C\colon \Gamma_\chi \cdot \E C \into \E \G$ to the composition $\iota_{\alpha([\ev(C)])\L}\circ \psi_{\ev(C)} \circ \varphi_C$ thus has image in $\Gamma_\chi \cdot \E D$. Thus the~$\psi_F$ do indeed form a simplicial homotopy inverse to the $\varphi_C$. 
\end{proof}

Recalling from Corollary~\ref{cor:char_space} that precomposition with~$\alpha$ induces an isomorphism between the character space of~$\G$ and the space of characters in~$\Gamma$ that vanish on~$\Lambda$, we denote by~$\tilde\chi\colon \G \to \RR$ the character with $\chi = \tilde\chi \circ \alpha$.

\begin{lem}[Completing the orbits]\label{lem:complete_orbits}
    The filtrations
    \[(\Gamma_\chi \cdot \E C)_{C\in \C(\G)} \quad \text{and} \quad (G_{\tilde\chi} \cdot \E C)_{C\in \C(\G)}\]
    of $\E \G$ are cofinal.
\end{lem}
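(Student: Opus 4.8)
The plan is to show each of the two filtrations is cofinal to the other, which by the discussion following Lemma~\ref{lem:homotopyequiv} makes them simplicially homotopy-equivalent (in fact the inclusions are mutually inverse equivalences). The key point is that $\Gamma_\chi$ and $\G_{\tilde\chi}$ define ``the same'' submonoid up to translating by the compact open subgroup $\L$: since $\chi=\tilde\chi\circ\alpha$ and $\alpha(\Gamma)$ is dense in~$\G$, while $\tilde\chi$ vanishes on~$\L$ (which is compact), every element of $\G_{\tilde\chi}$ differs from an element of $\alpha(\Gamma_\chi)$ by something in~$\L$, and conversely $\alpha(\Gamma_\chi)\subseteq \G_{\tilde\chi}$.

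First I would record the two containments. For the easy direction: if $g\in\Gamma_\chi$ then $\tilde\chi(\alpha(g))=\chi(g)\ge 0$, so $\alpha(\Gamma_\chi)\subseteq\G_{\tilde\chi}$, and hence $\Gamma_\chi\cdot\E C\subseteq\G_{\tilde\chi}\cdot\E C$ for every $C\in\C(\G)$. This shows $(\G_{\tilde\chi}\cdot\E C)_{C\in\C(\G)}$ is cofinal to $(\Gamma_\chi\cdot\E C)_{C\in\C(\G)}$ (take $\phi=\mathrm{id}$). For the reverse direction: given $s\in\G_{\tilde\chi}$, I would use that $\alpha(\Gamma)$ is dense together with the fact that $s\L$ is open in~$\G$ to find $g\in\Gamma$ with $\alpha(g)\in s\L$; then $\chi(g)=\tilde\chi(\alpha(g))=\tilde\chi(s)+\tilde\chi(\ell)=\tilde\chi(s)\ge 0$ for the appropriate $\ell\in\L$ (here $\tilde\chi|_\L\equiv 0$ because $\L$ is compact), so $g\in\Gamma_\chi$ and $s\in\alpha(g)\L\subseteq\alpha(\Gamma_\chi)\L$. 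Thus $\G_{\tilde\chi}\subseteq\alpha(\Gamma_\chi)\L$.

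With this last inclusion in hand, for any $C\in\C(\G)$ the set $\L C$ is compact (as $\L$ is compact) and
\[
\G_{\tilde\chi}\cdot\E C\subseteq \alpha(\Gamma_\chi)\L\cdot\E C\subseteq \alpha(\Gamma_\chi)\cdot\E(\L C)\subseteq \Gamma_\chi\cdot\E(\L C),
\]
using that $\alpha(\Gamma_\chi)$-translates are contained in $\Gamma_\chi$-translates (as $\Gamma$ acts on $\E\G$ through $\alpha$). Hence the poset map $C\mapsto \L C$ witnesses that $(\Gamma_\chi\cdot\E C)_{C\in\C(\G)}$ is cofinal to $(\G_{\tilde\chi}\cdot\E C)_{C\in\C(\G)}$. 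Since each filtration is cofinal to the other, they are cofinal, and in particular simplicially homotopy-equivalent.

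The only mildly delicate point is the density argument producing $g\in\Gamma$ with $\alpha(g)\in s\L$: one must note that $s\L$ is a nonempty open subset of~$\G$ (as $\L$ is open) and invoke density of $\alpha(\Gamma)$; everything else is bookkeeping. I expect no real obstacle here — this is essentially the observation that the monoid $\G_{\tilde\chi}$ and the image monoid $\alpha(\Gamma_\chi)$ differ only by the compact open subgroup~$\L$, which gets absorbed into the compact indexing sets.
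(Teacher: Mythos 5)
Your proof is correct and follows essentially the same route as the paper's: both use density of $\alpha(\Gamma)$ together with the facts that $\L$ is open and compact (hence $\tilde\chi$ vanishes on $\L$, so $\tilde\chi$ is constant on $\L$-cosets) to obtain $\G_{\tilde\chi}\subseteq\alpha(\Gamma_\chi)\L$, and then absorb $\L$ into the compact index via $C\mapsto\L C$.
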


\begin{proof}
    For each $C\in \C(\G)$, we obviously have
    $$\Gamma_{\chi} \cdot \E C = \alpha(\Gamma_\chi) \cdot\E C \subseteq \G_{\tilde\chi} \cdot \E C.$$

    For the converse direction, note first that since the cosets of~$\L$ form an partition of~$\G$ by open subsets and $\alpha(\Gamma)$~is dense in~$\G$, we have $\G=\alpha(\Gamma)\L$. What is more, since $\L$~is compact, $\tilde\chi$~is constant on these cosets, and so we have $\G_{\tilde\chi} = \alpha(\Gamma_\chi)\L$. Therefore, for each $C\in\C(\G)$, we see that
    \[\G_{\tilde\chi} \cdot \E C = \alpha(\Gamma_\chi )\L\cdot \E C \subseteq \alpha(\Gamma_\chi)\cdot \E (\L C) = \Gamma_\chi \cdot \E(\L C),\]
    where $\L C\in \C(\G)$.
\end{proof}

Putting everything together, we obtain:

\begin{thm}[$\Sigma$-sets of TDLC completions]\label{thm:main}
    Let $\phi\colon \Gamma \to G$~be a TDLC completion of the Hecke pair~$(\Gamma, \Lambda)$, and $\bar\chi\colon G\to \RR$ the character induced by~$\chi$.
    
    If $\Lambda$ is of type~$\mathrm{C}_n$, then:
    \begin{enumerate}
        \item for every $k\le n$, if $\bar\chi \in \TopS^k(G)$, then $\chi \in \TopS^k(\Gamma)$,
        \item for every $k\le n+1$, if $\chi \in \TopS^k(\Gamma)$, then $\bar\chi \in \TopS^k(G)$.
    \end{enumerate}

    Similarly, if $\Lambda$~is of type~$\mathrm{CP}_n(R)$, then:
     \begin{enumerate}
        \item for every $k\le n$, if $\bar\chi \in \TopS^k(G;R)$, then $\chi \in \TopS^k(\Gamma;R)$,
        \item for every $k\le n+1$, if $\chi \in \TopS^k(\Gamma;R)$, then $\bar\chi \in \TopS^k(G;R)$.
    \end{enumerate}

\end{thm}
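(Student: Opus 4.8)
The plan is to assemble results already in place, so the proof is essentially one of bookkeeping. First I would reduce to the case of the Schlichting completion $\G=\Gamma\slcpt\Lambda$. Writing $\tilde\chi\colon\G\to\RR$ for the character with $\tilde\chi\circ\alpha=\chi$ and $\check\phi\colon G\to\G$ for the quotient map of Theorem~\ref{thm:compl} (which satisfies $\check\phi\circ\phi=\alpha$), density of $\phi(\Gamma)$ forces $\tilde\chi\circ\check\phi=\bar\chi$, i.e.\ $\check\phi^*(\tilde\chi)=\bar\chi$. By Proposition~\ref{prop:completions_have_same_Sigma}, $\check\phi^*$ restricts to bijections $\TopS^k(\G)\cong\TopS^k(G)$ and $\TopS^k(\G;R)\cong\TopS^k(G;R)$, so $\bar\chi\in\TopS^k(G)$ iff $\tilde\chi\in\TopS^k(\G)$, and similarly with $R$-coefficients. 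Hence it suffices to prove all four assertions with $G$ replaced by $\G$ and $\bar\chi$ by $\tilde\chi$.

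Next I would translate membership of $\tilde\chi$ in the $\Sigma$-sets of $\G$ into a statement about a filtration of $\E(\Gamma/\Lambda)$. By Definition~\ref{dfn:sigma}, $\tilde\chi\in\TopS^k(\G)$ means precisely that $(\G_{\tilde\chi}\cdot\E C)_{C\in\C(\G)}$ is essentially $(k-1)$-connected. By Lemma~\ref{lem:complete_orbits} this filtration is cofinal with $(\Gamma_\chi\cdot\E C)_{C\in\C(\G)}$, and by Proposition~\ref{prop:quotient_to_slcpt} the latter is simplicially homotopy-equivalent to $(\Gamma_\chi\cdot\E F)_{F\in\C(\Gamma/\Lambda)}$; so by Lemma~\ref{lem:homotopyequiv} all three filtrations have the same essential connectedness properties. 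Reading the identical chain with ``essentially $(k-1)$-acyclic over~$R$'' in place of ``essentially $(k-1)$-connected'' relates $\tilde\chi\in\TopS^k(\G;R)$ to essential $(k-1)$-acyclicity of $(\Gamma_\chi\cdot\E F)_{F\in\C(\Gamma/\Lambda)}$.

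Finally I would feed this into Propositions~\ref{prop:sigmaofquotient} and~\ref{prop:sigmaofquotient_homological}, whose standing hypotheses are met here: $\Lambda$ almost commutes with compacts by Proposition~\ref{prop:comm_exchange} (as $(\Gamma,\Lambda)$ is a Hecke pair), and $\Lambda$ is of type~$\mathrm{C}_n$ (resp.\ $\mathrm{CP}_n(R)$) by assumption. For $k\le n$, Proposition~\ref{prop:sigmaofquotient}(1) turns essential $(k-1)$-connectedness of $(\Gamma_\chi\cdot\E F)_F$ into $\chi\in\TopS^k(\Gamma)$, which combined with the previous paragraph yields item~(1); for $k\le n+1$, Proposition~\ref{prop:sigmaofquotient}(2) gives the reverse implication, which is item~(2). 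The $\mathrm{CP}_n(R)$ statements follow in the same way from Proposition~\ref{prop:sigmaofquotient_homological}. I do not expect a genuine obstacle here, since the real work has been done in Sections~\ref{sec:part1} and~\ref{sec:part2}; the only points needing care are the identification $\check\phi^*(\tilde\chi)=\bar\chi$ used in the reduction step, and propagating the degree shift ($k\le n$ versus $k\le n+1$) correctly through the two propositions so that it matches the shift already present in Proposition~\ref{prop:completions_have_same_Sigma}.
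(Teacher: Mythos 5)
Your proposal is correct and follows essentially the same route as the paper: reduce to the Schlichting completion via Proposition~\ref{prop:completions_have_same_Sigma}, chain together Lemma~\ref{lem:complete_orbits} and Proposition~\ref{prop:quotient_to_slcpt} with Lemma~\ref{lem:homotopyequiv} to transfer essential connectedness/acyclicity, and then apply Propositions~\ref{prop:sigmaofquotient} and~\ref{prop:sigmaofquotient_homological} (using Proposition~\ref{prop:comm_exchange} for the almost-commutativity hypothesis). The only difference is that you spell out the identification $\check\phi^*(\tilde\chi)=\bar\chi$ in the reduction step, which the paper leaves implicit by working directly with $G=\G$.
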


\begin{proof}
    By Proposition~\ref{prop:completions_have_same_Sigma}, it suffices to prove the statement in the case where $G = \G$ is the Schlichting completion, so $\bar\chi = \tilde \chi$. We have a sequence of simplicial homotopy equivalences of filtrations
    \begin{align*}
    (\G_{\tilde\chi} \cdot \E C)_{C\in \C(\G)} & \simeq (\Gamma_\chi\cdot \E C)_{C\in \C(\G)} & \text{(Lemma~\ref{lem:complete_orbits})}&\\
    & \simeq (\Gamma_\chi \cdot \E F)_{F\in \C(\Lambda/\Gamma)} & \text{(Proposition~\ref{prop:quotient_to_slcpt})}&,
    \end{align*}
    so all these filtrations have the same essential connectedness / acyclicity properties (by Lemma\ref{lem:homotopyequiv}).
    
    On the one hand, the first of the above filtrations determines which $\Sigma$-sets of~$\G$ contain~$\tilde\chi$.
    On the other hand, since $\Lambda$~almost commutes with compacts (by Proposition~\ref{prop:comm_exchange}), Propositions \ref{prop:sigmaofquotient}~and~\ref{prop:sigmaofquotient_homological} tell us that membership of~$\chi$ in the indicated $\Sigma$-sets may be probed on the latter filtration.
\end{proof}

\section{Applications}\label{sec:applications}
\subsection{Baumslag--Solitar groups}\label{sec:BS}

For each $m,n\in \ZZ \setminus\{0\}$, we consider the Baumslag--Solitar group
$$\Gamma:=\operatorname{BS}(m,n) = \langle a,t \mid ta^mt^{-1} = a^n\rangle,$$
which has $\Lambda:=\langle a\rangle \cong \ZZ$ as a commensurated subgroup. Elder and Willis have studied the Schlichting completion $G_{m,n} := \Gamma \slcpt \Lambda$ \cite{EW18}. In Section~\ref{sec:BS} we will compute their $\Sigma$-sets.

By Corollary~\ref{cor:char_space}, the character space of~$G_{m,n}$ is given by
\[\Homtop(G_{m,n}, \RR) \cong \Hom(\Gamma, \RR)_\Lambda = \RR \tau,\]
where $\tau\colon \Gamma \to \RR$~is the character sending $a \mapsto 0, t  \mapsto 1$. The following result is well-known, but we include a proof for the reader's convenience.
\begin{lem}[The line $\RR \tau$ and $\Sigma$-sets]\label{lem:bs}
We have:
\begin{enumerate}
    \item if $|mn|=1$ then
    $\RR\tau\subseteq \TopS^k(\Gamma)$ for all  $k\ge 1$,
       
    \item if $|m|=1$ and $|n| \ge 2$ then 
    $\RR\tau\cap \TopS^k(\Gamma)=\{\lambda\tau\mid\lambda\le0\}$ for all $k\ge 1$,
     \item if $|m| \geq 2$ and $|n|=1$ then
     $\RR\tau\cap \TopS^k(\Gamma)=\{\lambda\tau\mid\lambda\ge0\}$  for all $k\ge 1$,
      \item if $|m|,|n|\ge 2$ then
    $\RR\tau\cap \TopS^k(\Gamma) = \{0\}$ for all $k\ge 1$.
    \end{enumerate}

    Analogous statements hold for the homological $\Sigma$-sets $\Sigma^k(\Gamma;R)$ over any commutative ring~$R$.
\end{lem}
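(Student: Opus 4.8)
The plan is to combine three classical facts about $\Gamma=\BS(m,n)$ and the character $\tau$ with $\tau(a)=0$, $\tau(t)=1$. First, $\Gamma$ acts on the Bass--Serre tree of the HNN decomposition $\Gamma=\langle a\rangle\ast_{a^m=a^n}$ with one orbit of vertices and one of edges, all stabilizers conjugate to $\langle a\rangle\cong\ZZ$; since $\ZZ$ is of type $\mathrm F_\infty$ and the tree is contractible, $\Gamma$ is of type $\mathrm F_\infty$, hence of type $\mathrm{FP}_\infty(R)$ for every commutative ring~$R$. By the Proposition on the zero character, $0\in\TopS^k(\Gamma)$ and $0\in\TopS^k(\Gamma;R)$ for all~$k$; this accounts for the element $0$ present in each of the four cases. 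Recall also that $\TopS^k(\Gamma)=\Sigma^k(\Gamma)$ and $\TopS^k(\Gamma;R)=\Sigma^k(\Gamma;R)$ since $\Gamma$ is discrete, so all statements concern the classical invariants.

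Next I would identify $N:=\ker\tau=\ncl a\ncr_\Gamma$. With $a_i:=t^iat^{-i}$, the relation $ta^mt^{-1}=a^n$ gives the presentation $N=\langle a_i\ (i\in\ZZ)\mid a_{i+1}^m=a_i^n\rangle$, so $N$ is the fundamental group of a graph of infinite cyclic groups over the line. If $|m|=|n|=1$, all $a_i$ coincide up to inversion and $N\cong\ZZ$, of type $\mathrm F_\infty$; otherwise the underlying graph is an infinite tree with all vertex and edge groups nontrivial, so $N$ is not finitely generated, hence not of type $\mathrm F_1=\mathrm{FP}_1(R)$ for any~$R$. The fibering criterion of Bieri and Renz --- for a discrete group $G$ of type $\mathrm F_\infty$ and a surjection $\chi\colon G\onto\ZZ$, the kernel is of type $\mathrm F_k$ (resp.\ $\mathrm{FP}_k(R)$) if and only if $\{\chi,-\chi\}\subseteq\Sigma^k(G)$ (resp.\ $\subseteq\Sigma^k(G;R)$) --- then settles case~(1) immediately (both $\pm\tau$ lie in every $\Sigma^k$ and $\Sigma^k(\cdot;R)$), and in cases~(2)--(4) shows that $\tau$ and $-\tau$ are not \emph{both} in $\Sigma^1(\Gamma)$.

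To decide which of $\pm\tau$ survives in cases~(2)--(4), I would quote the well-known computation $\tau\in\Sigma^1(\BS(m,n))\iff|n|=1$ and $-\tau\in\Sigma^1(\BS(m,n))\iff|m|=1$ (Bieri--Strebel; see also Strebel's survey). Alternatively, the Cayley-graph criterion for $\Sigma^1$ reduces $\tau\in\Sigma^1(\Gamma)$ to the equality $\langle a_i\ (i\ge0)\rangle=N$, which fails for $|n|\ge2$: in the Bass--Serre tree of~$N$ the left-hand subgroup is the stabilizer of the subtree lying over the ray $[0,\infty)$, and $a_{-1}$ fixes no vertex of it, since $a_{-1}\notin\langle a_0\rangle=\operatorname{Stab}_N(v_0)$ (as $a_{-1}\in\langle a_0\rangle$ would force $a_{-1}\in\langle a_0\rangle\cap\langle a_{-1}\rangle=\langle a_0^m\rangle=\langle a_{-1}^n\rangle$, impossible for $|n|\ge2$), while an elliptic element stabilizing a subtree must fix one of its vertices. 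Because the filtration stages are nonempty simplicial sets, essential $\redH_0$-triviality over any nonzero~$R$ is equivalent to essential $\pi_0$-triviality, so $\Sigma^1(\Gamma;R)=\Sigma^1(\Gamma)$; combined with $\TopS^k\subseteq\TopS^1$, the character not in $\Sigma^1(\Gamma)$ --- together with the open half-line of $\RR\tau$ it spans --- misses every $\TopS^k(\Gamma)$ and $\TopS^k(\Gamma;R)$ with $k\ge1$. This completes case~(4). For cases~(2) and~(3) it remains to place the surviving character in $\Sigma^\infty$: when $|m|=1$, $\Gamma$ is a strictly ascending HNN extension of $\langle a\rangle\cong\ZZ$, so by the standard ascending-HNN criterion the character negative on the stable letter, namely $-\tau$, lies in $\Sigma^\infty(\Gamma)$ and $\Sigma^\infty(\Gamma;R)$ (the base $\ZZ$ being $\mathrm{FP}_\infty(R)$ for every~$R$); case~(3) follows from case~(2) via the isomorphism $\BS(m,1)\cong\BS(1,m)$, $t\mapsto t^{-1}$, which swaps $\tau$ and $-\tau$. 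Together with the contribution of $0$ from the first step this yields the asserted equalities, in the homotopical and the homological setting alike.

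The main obstacle is entirely one of bookkeeping: aligning the sign conventions of the fibering criterion, the classical $\Sigma^1$-computation, and the ascending-HNN criterion so that the ``ascending direction'' of $\Gamma$ is matched with the correct half-line ($\lambda\le0$ in case~(2), $\lambda\ge0$ in case~(3)). If one prefers a self-contained proof of the $\Sigma^1$-values over a citation, the remaining technical point is to verify $\langle a_i\ (i\ge0)\rangle\subsetneq N$ whenever $|m|,|n|\ge2$, uniformly including degenerate cases such as $\BS(2,2)$ (where passing to the abelianisation $N^{\mathrm{ab}}\cong\ZZ[t^{\pm1}]/(mt-n)$ fails to detect this), which the Bass--Serre tree argument sketched above handles in one stroke.
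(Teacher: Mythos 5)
Your proof is correct, and it takes a genuinely different route from the paper's. The paper argues directly from the explicit Cayley $2$-complex $X$ of $\BS(m,n)$, identified as $T\times\RR$ with $T$ the $(|m|+|n|)$-regular tree, and reads off the essential connectedness/acyclicity of the filtration $(X_{\tau\ge r})_r$ from contractibility of the half-spaces $X_{\tau\ge 0}$ and $X_{\tau\le 0}$ together with Renz's $\Sigma^1$-criterion. You instead assemble the answer from off-the-shelf tools: the Brown finiteness criterion for the Bass--Serre action (to get $\mathrm F_\infty$), the Reidemeister--Schreier presentation of $N=\ker\tau$ as a graph of cyclic groups over the line, the Bieri--Renz fibering criterion for $\ZZ$-quotients, the classical Bieri--Strebel identification of $\Sigma^1(\BS(m,n))$, and the ascending-HNN criterion to place the surviving character in $\Sigma^\infty$; the symmetry $\BS(m,1)\cong\BS(1,m)$ via $t\mapsto t^{-1}$ trades cases (2) and (3). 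Your sign conventions check out against the paper's Cayley-graph description. The paper's approach is more self-contained and geometric; yours is shorter modulo the cited machinery and makes the dichotomy $|n|=1$ versus $|n|\ge2$ visible at the level of $N$ rather than of the ambient complex. One small imprecision worth noting: the blanket assertion that an infinite line of nontrivial cyclic vertex and edge groups forces $N$ to be infinitely generated is not true as stated (if all edge inclusions were isomorphisms, $N$ would be $\ZZ$); what actually makes it work here is that at least one edge inclusion has index $\ge 2$. This is harmless in your argument since the infinite generation of $N$ is only used as a sanity check in cases (2)--(4) --- your later Bass--Serre tree computation of $\Sigma^1$ together with the fibering criterion already delivers it, and you are right that it handles the case $\BS(2,2)$ where the abelianisation of $N$ is misleading.
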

\begin{proof}
    We start by recalling the well-known description of  the Cayley graph~$Y$ with respect to $\{a,t\}$. The relator $ta^mt^{-1}a^{-n}$ produces a ``basic brick" in the Cayley graph, and the basic bricks are glued together to form an infinite sheet (see Figure~\ref{fig:sheet}).
   \begin{figure}[h] 
\centering
    \def\svgwidth{\linewidth}
    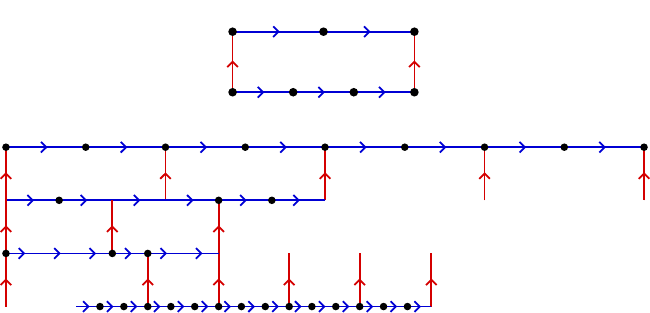

  \caption{A basic brick for constructing the Cayley graph of~$\BS(2,3)$ (top), and a portion of the corresponding infinite sheet (bottom).}
  \label{fig:sheet}
  \end{figure}
    
Finally, the Cayley graph of $\Gamma$ is obtained by gluing $n$~upper half sheets together to $m$~lower half sheets along each infinite path whose edges are labeled only by~$a$ (see Figure~\ref{fig:cg}).
\begin{figure}[h]
    \centering
    \def\svgwidth{0.9 \linewidth}
\begingroup%
  \makeatletter%
  \providecommand\color[2][]{%
    \errmessage{(Inkscape) Color is used for the text in Inkscape, but the package 'color.sty' is not loaded}%
    \renewcommand\color[2][]{}%
  }%
  \providecommand\transparent[1]{%
    \errmessage{(Inkscape) Transparency is used (non-zero) for the text in Inkscape, but the package 'transparent.sty' is not loaded}%
    \renewcommand\transparent[1]{}%
  }%
  \providecommand\rotatebox[2]{#2}%
  \newcommand*\fsize{\dimexpr\f@size pt\relax}%
  \newcommand*\lineheight[1]{\fontsize{\fsize}{#1\fsize}\selectfont}%
  \ifx\svgwidth\undefined%
    \setlength{\unitlength}{446.0903378bp}%
    \ifx\svgscale\undefined%
      \relax%
    \else%
      \setlength{\unitlength}{\unitlength * \real{\svgscale}}%
    \fi%
  \else%
    \setlength{\unitlength}{\svgwidth}%
  \fi%
  \global\let\svgwidth\undefined%
  \global\let\svgscale\undefined%
  \makeatother%
  \begin{picture}(1,0.54819091)%
    \lineheight{1}%
    \setlength\tabcolsep{0pt}%
    \put(0,0){\includegraphics[width=\unitlength,page=1]{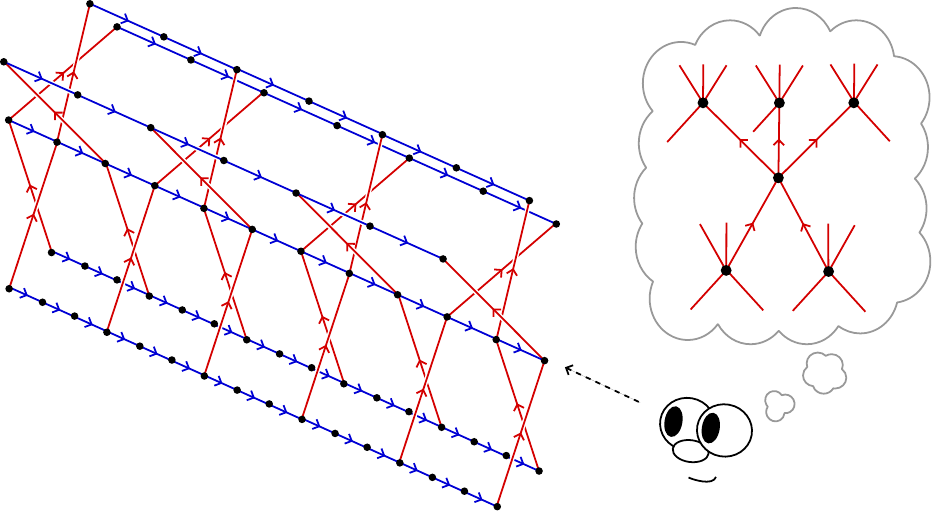}}%
    \put(0.2940384,0.26904727){\color[rgb]{0,0,0.83137255}\makebox(0,0)[t]{\lineheight{0.69999999}\smash{\begin{tabular}[t]{c}$a$\end{tabular}}}}%
    \put(0.26405006,0.32710291){\color[rgb]{0.83137255,0,0}\makebox(0,0)[t]{\lineheight{0.69999999}\smash{\begin{tabular}[t]{c}$t$\end{tabular}}}}%
    \put(0.72673164,0.33257839){\color[rgb]{0,0,0}\makebox(0,0)[t]{\lineheight{0.69999999}\smash{\begin{tabular}[t]{c}$T$\end{tabular}}}}%
  \end{picture}%
\endgroup%

    \caption{The Cayley graph of $\BS(2,3)$ with respect to the generating set~$\{a,t\}$ (left). Filling in the ``bricks'' with $2$-cells, produces a Cayley complex homeomorphic to~$T\times\RR$, where $T$~is a $5$-regular tree (right).}
    \label{fig:cg}
\end{figure}
    Completing~$Y$ to a presentation complex, which amounts to attaching a $2$-cell at each brick of the Cayley graph, we obtain a free $\Gamma$-CW-complex~$X$ of finite type, which is homeomorphic to $T\times \RR$, where $T$~is an $(m+n)$-regular tree. In particular, $X$~is contractible, which shows that $\Gamma$~is of type $F_\infty$; in other words, $0\in \Sigma^\infty(\Gamma)$.
    
    Since $\Sigma$-sets are closed under taking positive scalar multiples, we need only determine the membership status of the characters $\pm \tau$ in each $\Sigma^k(\Gamma)$. Note that for each vertex~$v$ of~$T$, the vertices of~$Y$ in the line $\{v\}\times \RR$ have constant $\tau$-value. Moreover, at each vertex of~$T$, there are precisely $n$~edges pointing in the increasing direction of~$\tau$, which we visualise as ``upward'', and $m$~pointing ``downward''. It is then clear that the following two conditions are equivalent:
\begin{enumerate}
    \item[(1)]  $|n|=1$ (resp. $|m|=1$),
    \item[(2)] the subgraph $Y_{\tau \ge 0}$ (resp. $Y_{\tau\le 0}$) is connected.
\end{enumerate}
    Since each connected component of $Y_{\tau \ge 0}$ is contractible, these conditions are also equivalent to
\begin{enumerate}
    \item[(3)] the subcomplex $X_{\tau \ge 0}$ (resp. $X_{\tau\le 0}$) is contractible.
\end{enumerate}
It is easy to check that if some filtration stage is trivial then the filtration is essentially trivial, so  condition (2) implies:
\begin{enumerate}
    \item[(4)] $\tau \in  \Sigma^1(\Gamma)$ (resp. $-\tau \in \Sigma^1(\Gamma)$).
\end{enumerate}
But it is well-known that for the special case of the $\Sigma$-set $\Sigma^1$, the converse implication $(4) \Rightarrow (2)$ holds as well \cite[Satz~2.4]{Ren88}.

We consider one final condition
\begin{enumerate}
    \item[(5)] $\tau \in \Sigma^\infty(\Gamma)$ (resp. $-\tau \in \Sigma^\infty(\Gamma)$).
\end{enumerate}
We obviously have implications $(3)\Rightarrow(5)\Rightarrow(4)$, so all five conditions are equivalent.
 
The assertion on homotopical $\Sigma$-sets is now merely a restatement of the equivalence $(1)\Leftrightarrow (5)$.

To prove the statement for the homological $\Sigma$-sets, we use the fact that $\Sigma^1(\Gamma) = \Sigma^1(\Gamma;R)$, and thus it suffices to show that $\Sigma^1(\Gamma)\subseteq \Sigma^\infty(\Gamma;R)$. But this follows from the implications $(4) \Rightarrow (3) \Rightarrow \chi \in\Sigma^\infty(\Gamma;R)$.
\end{proof}
 
Let $\bar\tau\colon G_{m,n}\to \RR$ be the character induced by~$\tau$ as in Lemma~\ref{lem:mapfromslcpt}.

\begin{prop}[The $\Sigma$-sets of~$G_{m,n}$]\label{prop:BS_main}
  The homotopical $\Sigma$-sets of $G_{m,n}$ are given by:
\begin{enumerate}
    \item if $|mn|=1$ then
    $\TopS^k(G_{m,n})=\RR\bar\tau$ for all  $k\ge 1$,
       
    \item if $|m|=1$ and $|n| \ge 2$ then
    $\TopS^k(G_{m,n}) = \{\lambda\bar\tau \mid \lambda \le 0\}$ for all $k\ge 1$,
     \item if $|m| \geq 2$ and $|n|=1$ then
    $\TopS^k(G_{m,n}) = \{\lambda\bar\tau \mid \lambda \ge 0\}$ for all $k\ge 1$,
      \item if $|m| \geq 2$ and $|n|\ge 2$ then
    $ \TopS^k(G_{m,n}) = \{0\}$ for all $k\ge 1 $.
    \end{enumerate}
    
    Moreover, for every commutative ring $R$, every $m,n\in \ZZ \setminus\{0\}$, and every $k\in \NN$, we have
    \[\TopS^k(G_{m,n};R) = \TopS^k(G_{m,n}).\]
\end{prop}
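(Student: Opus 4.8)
The plan is to reduce everything to Theorem~\ref{thm:main} together with the computation of the classical $\Sigma$-sets of $\Gamma=\BS(m,n)$ recorded in Lemma~\ref{lem:bs}; the point is simply that the commensurated subgroup $\Lambda=\langle a\rangle$ is as finiteness-theoretically tame as possible. First I would note that, since $\BS(m,n)$ carries the discrete topology, $\Lambda$ is a discrete group isomorphic to $\ZZ$, hence of type $\mathrm F_\infty$ and of type $\mathrm{FP}_\infty(R)$ for every commutative ring $R$; as the compactness properties $\mathrm C_n$ and $\mathrm{CP}_n(R)$ specialize to $\mathrm F_n$ and $\mathrm{FP}_n(R)$ on discrete groups, $\Lambda$ is of type $\mathrm C_n$ and of type $\mathrm{CP}_n(R)$ for \emph{every} $n\in\NN$.

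Next, since for any fixed $k$ we may choose $n\ge k$, both implications of Theorem~\ref{thm:main} apply, so applying that theorem to the Schlichting completion $\alpha\colon\Gamma\to G_{m,n}$ gives, for every $k\in\NN$, every $\lambda\in\RR$, and every commutative ring $R$,
\[
\lambda\bar\tau\in\TopS^k(G_{m,n})\iff\lambda\tau\in\TopS^k(\Gamma),\qquad
\lambda\bar\tau\in\TopS^k(G_{m,n};R)\iff\lambda\tau\in\TopS^k(\Gamma;R),
\]
where I use that $\lambda\bar\tau$ is the character of $G_{m,n}$ induced by $\lambda\tau$ (by linearity of the isomorphism $\alpha^*$ of Corollary~\ref{cor:char_space} and the defining formula of Lemma~\ref{lem:mapfromslcpt}). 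Since Corollary~\ref{cor:char_space} identifies the whole character space of $G_{m,n}$ with $\RR\bar\tau$, these equivalences pin down every $\Sigma$-set of $G_{m,n}$ in terms of $\RR\tau\cap\TopS^k(\Gamma)$ and $\RR\tau\cap\TopS^k(\Gamma;R)$.

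Finally I would read off the four cases directly: transporting the description of $\RR\tau\cap\TopS^k(\Gamma)$ supplied by Lemma~\ref{lem:bs} through the first equivalence yields the stated formulas for $\TopS^k(G_{m,n})$ when $k\ge1$, while for $k=0$ both sides equal $\RR\bar\tau$. For the last assertion, the homological half of Lemma~\ref{lem:bs} (together with $\TopS^k(\Gamma;R)=\Sigma^k(\Gamma;R)$ for discrete $\Gamma$) says that $\RR\tau\cap\TopS^k(\Gamma;R)$ is given by the same sets as $\RR\tau\cap\TopS^k(\Gamma)$ for all $k\ge1$, and trivially both $0$-th $\Sigma$-sets are the full character space; transporting through the second equivalence then gives $\TopS^k(G_{m,n};R)=\TopS^k(G_{m,n})$ for every $k\in\NN$. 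I do not expect any genuine obstacle here: the geometric content (the structure of the Cayley complex of $\BS(m,n)$, homeomorphic to $T\times\RR$, and the resulting $\Sigma$-computation) is already packaged in Lemma~\ref{lem:bs}, and the transfer to the completion is exactly what Theorem~\ref{thm:main} delivers; the only points needing care are checking that $\Lambda$ has the required finiteness properties as a \emph{topological} group — immediate, being a discrete copy of $\ZZ$ — and that $\bar\tau$ corresponds to $\tau$ under $\alpha^*$, which is how $\bar\tau$ was defined.
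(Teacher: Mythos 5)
Your proof is correct and follows essentially the same route as the paper: identify the character space via Corollary~\ref{cor:char_space}, note that $\Lambda\cong\ZZ$ has all the needed finiteness properties, apply Theorem~\ref{thm:main} with $n\ge k$ to get the two-way equivalence, and read off the result from Lemma~\ref{lem:bs}. Your treatment is slightly more explicit than the paper's (spelling out the passage from $\mathrm F_\infty$ to $\mathrm C_n$/$\mathrm{CP}_n(R)$ on discrete groups, the linearity of $\alpha^*$, and the trivial $k=0$ case), but there is no substantive difference in the argument.
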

\begin{proof}
By Corollary~\ref{cor:char_space}, the canonical map $\alpha \colon \Gamma\to G_{m,n}$ induces an isomorphism of $\RR$-vector spaces
$$\alpha^*\colon \Homtop(G_{m,n},\RR) \to \Hom(\Gamma, \RR)_\Lambda,$$
and the right-hand side is easily seen to be $1$-dimensional and spanned by the character~$\tau$. Therefore, $\Homtop(G_{m,n},\RR)$ is spanned by~$\bar\tau$.

Since $\Lambda \cong \ZZ$ is of type $\mathrm{F}_\infty$ (hence also of type $\mathrm {FP}_\infty (R)$), Theorem~\ref{thm:main} states that each $\Sigma$-set $\TopS^k(G_{m,n})$ is precisely the preimage of $\Sigma^k(\Gamma)$ under~$\alpha^*$ (and similarly for the homological $\Sigma$-sets). 
 Hence, Lemma~\ref{lem:bs} directly yields the claim.
\end{proof}
\begin{rem}[Discrete completion]
Raum has shown that the group $G_{m,n}$ is discrete if, and only if, $|m|=|n|$, in which case $G_{m,n}\cong\big(\ZZ/|n|\ZZ\big)\ast\ZZ$ \cite[Theorems 7.2~and~9.2]{R19}.
\end{rem}

\begin{rem}[The case $m=1$]
The TDLC group $G_{1,n}$ with $|n|\ge2$ can also be described using the isomorphism $\Gamma\cong \ZZ[\tfrac 1n]\rtimes \ZZ$, where the semidirect product action is by multiplication with~$n$, and the subgroup~$\Lambda$~corresponds to $\ZZ\subseteq \ZZ[\tfrac 1n]$.

Consider the dense inclusion
    $$\ZZ[\tfrac1n]\rtimes \ZZ\into\biggl(\bigoplus_{p\mid n}\QQ_p\biggr)\rtimes \ZZ=:G.$$
    The TDLC group~$G$ has a compact open subgroup $L:=\bigoplus_{p\mid n}\ZZ_p$, whose preimage corresponds to~$\Lambda$, so Proposition~\ref{prop:schlichting_from_compl} tells us that the above map induces an isomorphism $G_{1,n} \cong G/\operatorname{Core}_{G}(L)$.
    And this normal core is trivial: if $n$~has the prime decomposition $n=\prod_{p\mid n} p^{v_p}$ and we denote by~$t$ the generator of the $\ZZ$-factor, then for every $k\in \ZZ$ we have $t^kLt^{-k} = \bigoplus_{p\mid n}p^{k v_p}\ZZ_p$. Since $v_p \ge 1$ for every~$p$, the intersection of all these conjugates is trivial. We thus conclude that $G_{1,n}\cong G$.
\end{rem}

\subsection{Building on examples of Schesler}\label{sec:Schesler}

Throughout this section, fix $n\in\NN$ and a nonempty finite set~$\P$ of primes. Given a commutative ring~$R$, denote by~$\mathcal B_n(R)$ the subgroup of upper triangular matrices in~$\operatorname{SL}_n(R)$. We will write $\ZZ[\mathcal P^{-1}]$ to denote the subring of~$\QQ$ generated by $\{\tfrac 1 p \mid p\in \mathcal P\}$ and equipped with the discrete topology.

We shall be interested in the discrete groups
$$\Gamma:= \B_n(\ZZ[\P^{-1}]) \qquad \text{and}\qquad \Lambda:=\mathcal B_n(\ZZ).$$
Eduard Schesler has given a partial description of the homotopical $\Sigma$-sets of~$\Gamma$. In this section, we will see that $\Lambda$~is a commensurated subgroup of~$\Gamma$ of type~$\mathrm F_\infty$, compute the Schlichting completion $\G:=\Gamma\slcpt\Lambda$, and use our main result, Theorem~\ref{thm:main}, to transfer Schesler's computations from $\Gamma$ to~$\G$, producing a description of the the sets $\TopS^k(\G)$ that is as complete as Schesler's.

\subsubsection{The Schlichting completion}

Define the topological rings
$$\ZZ_\P:=\prod_{p\in\P}\ZZ_p \qquad \text{and} \qquad \QQ_\P := \prod_{p\in\P} \QQ_p,$$
equipped with the product topology. We regard $\ZZ[\mathcal P^{-1}]$ as a subring of~$\QQ_\P$ via component-wise inclusion.
\begin{lem}[A dense subring]\label{lem:dense_subring}
    The ring $\ZZ[\P^{-1}]$~is dense in~$\QQ_\P$.
\end{lem}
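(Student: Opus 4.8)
The plan is to deduce this from the Chinese Remainder Theorem, via the classical fact that $\ZZ$ is dense in the profinite ring $\ZZ_\P=\prod_{p\in\P}\ZZ_p$. First I would observe that $\ZZ_\P$ is an \emph{open} subring of $\QQ_\P$, being a product of the open subrings $\ZZ_p\subseteq\QQ_p$, and that, writing $N:=\prod_{p\in\P}p$, multiplication by $N$ is a homeomorphism of $\QQ_\P$ (with inverse multiplication by $N^{-1}$). Hence each dilate $N^{-k}\ZZ_\P$, $k\in\NN$, is open in $\QQ_\P$. Because for distinct $p,p'\in\P$ the prime $p'$ is a unit in $\ZZ_p$, one has $N^{-k}\ZZ_\P=\prod_{p\in\P}p^{-k}\ZZ_p$; and since $\P$ is finite, every $(x_p)_{p\in\P}\in\QQ_\P$ satisfies $v_p(x_p)\ge -k$ for all $p$ once $k$ is large, so $(x_p)_p\in N^{-k}\ZZ_\P$. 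Thus $\QQ_\P=\bigcup_{k\in\NN}N^{-k}\ZZ_\P$ is an increasing union of open subsets, and by the same argument $\ZZ[\P^{-1}]=\bigcup_{k\in\NN}N^{-k}\ZZ$.

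Next I would establish that $\ZZ$ is dense in $\ZZ_\P$. Identifying $\ZZ_\P=\varprojlim_k\prod_{p\in\P}\ZZ/p^k\ZZ$ and using that the integers $p^k$ ($p\in\P$) are pairwise coprime, the Chinese Remainder Theorem yields $\prod_{p\in\P}\ZZ/p^k\ZZ\cong\ZZ/(\prod_{p\in\P}p^k)\ZZ$, onto which $\ZZ$ surjects; since these surjections are compatible over $k$, the image of $\ZZ$ is dense in the inverse limit $\ZZ_\P$. Transporting along the homeomorphism given by multiplication by $N^{-k}$, it follows that $N^{-k}\ZZ$ is dense in $N^{-k}\ZZ_\P$ for every $k\in\NN$.

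To conclude, let $U\subseteq\QQ_\P$ be nonempty and open, and pick $x\in U$; then $x\in N^{-k}\ZZ_\P$ for some $k$, so $U\cap N^{-k}\ZZ_\P$ is a nonempty open subset of $N^{-k}\ZZ_\P$, hence it meets the dense subset $N^{-k}\ZZ\subseteq\ZZ[\P^{-1}]$. Thus $\ZZ[\P^{-1}]$ intersects every nonempty open subset of $\QQ_\P$, which is exactly the assertion. I do not anticipate a genuine obstacle here — this is a routine strong-approximation argument — the only point demanding care is the bookkeeping of $p$-adic valuations (ensuring $N^{-k}\ZZ_\P=\prod_{p\in\P}p^{-k}\ZZ_p$ and that the union over $k$ exhausts $\QQ_\P$), which is where finiteness of $\P$ is used.
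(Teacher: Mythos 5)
Your proof is correct and takes essentially the same route as the paper's: clear denominators so as to land in $\ZZ_\P$, invoke the Chinese Remainder Theorem to approximate by an integer, then scale back. The only cosmetic difference is that you organize the bookkeeping via the increasing open cover $\QQ_\P=\bigcup_k N^{-k}\ZZ_\P$ with $N=\prod_{p\in\P}p$, whereas the paper scales each element by a factor $P$ tailored to its valuations.
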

\begin{proof}
    Let $(\alpha_p)_{p\in\P} \in \QQ_\P$, which we will show to be arbitrarily well approximated by elements of~$\ZZ[\P^{-1}]$. Write $v_\alpha:=\min\{\nu_p(\alpha_p) \mid p\in \P\}$ and define the integer
    $$P:=\prod_{\substack{p\in \P\\\nu_p(\alpha_p)<0}} p^{-v_\alpha}.$$
    Note that for every $p\in \P$, we have $\nu_p(P\alpha_p)\ge 0$, that is, $P\alpha \in \ZZ_\P$.

    A well-known application of the Chinese Remainder Theorem shows that $\ZZ$~is dense in~$\ZZ_\P$, and if  $n\in \ZZ$ approximates~$P\alpha$, say with $\nu_p (n-P\alpha) \ge m$ for all $p\in \P$, then
    $$\nu_p (\tfrac n P - \alpha) = \nu_p (\tfrac 1P)+\nu_p (n - P\alpha) \ge v_\alpha + m.$$
    Therefore, if $n$~is chosen close enough to~$P\alpha$, then $\tfrac n P \in\ZZ[\P^{-1}]$ gets arbitrarily close to~$\alpha$.
\end{proof}

We now consider the group $\QQ_\P^\times$ of units in~$\QQ_\P$. Since for every prime~$p$ the operation $\QQ_p ^\times \to \QQ_p^\times$ of taking multiplicative inverses is continuous, also taking inverses in $\QQ_\P^\times$ is continuous, and so $\QQ_\P^\times$~is  a (TDLC) topological group.

\begin{lem}[Units of~{$\ZZ[\P^{-1}]$}] \label{lem:topology_invertibles}
    The group of units $\ZZ[\P^{-1}]^\times$ is a discrete subgroup of~$\QQ_\P^\times$, isomorphic to~$ \ZZ/2 \oplus \ZZ^{|\P|}$.
\end{lem}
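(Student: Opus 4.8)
The plan is to first determine $\ZZ[\P^{-1}]^\times$ as an abstract group, and then verify discreteness in $\QQ_\P^\times$ by separating the identity from the rest of the group with a single basic open set. For the algebraic identification, I claim that
$\ZZ[\P^{-1}]^\times = \{\pm\prod_{p\in\P} p^{a_p} \mid (a_p)_{p\in\P}\in\ZZ^{\P}\}$.
The inclusion ``$\supseteq$'' is immediate since each $p\in\P$ is invertible in $\ZZ[\P^{-1}]$. For ``$\subseteq$'', write an arbitrary $u\in\ZZ[\P^{-1}]^\times$ and its inverse as reduced fractions whose denominators are products of primes in~$\P$; multiplying them and invoking unique factorisation in~$\ZZ$ forces the numerators also to be, up to sign, products of primes in~$\P$, so $u$ has the asserted form. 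The assignment $(\varepsilon, (a_p)_p)\mapsto \varepsilon\prod_{p\in\P}p^{a_p}$ is then a surjective group homomorphism $\{\pm1\}\times\ZZ^{\P}\to \ZZ[\P^{-1}]^\times$, and it is injective, again by unique factorisation; since $\P$ is finite this yields $\ZZ[\P^{-1}]^\times\cong \ZZ/2\ZZ\oplus\ZZ^{|\P|}$.

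For discreteness, recall that $\ZZ[\P^{-1}]^\times$ sits inside $\QQ_\P^\times$ via the componentwise inclusion $\ZZ[\P^{-1}]\into\QQ_\P$ (a ring homomorphism, hence it carries units to units). Consider $U:=\prod_{p\in\P}(1+p^{e_p}\ZZ_p)$, where $e_p=1$ for odd~$p$ and $e_2=2$: it is open in $\prod_{p\in\P}\ZZ_p$, hence in~$\QQ_\P$, and since $U\subseteq \prod_{p\in\P}\ZZ_p^\times\subseteq \QQ_\P^\times$ it is an open neighbourhood of~$1$ in the unit group $\QQ_\P^\times$. If $u=\pm\prod_p p^{a_p}\in U$, then in each~$\QQ_p$ the element $u$ is a $p$-adic unit, so $a_p=\nu_p(u)=0$; thus $u=\pm1$, and since $-1\in 1+p^{e_p}\ZZ_p$ would require $\nu_p(2)\ge e_p$, which fails for every $p\in\P$, we get $-1\notin U$ and hence $u=1$. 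So $U\cap\ZZ[\P^{-1}]^\times=\{1\}$, and translating~$U$ by group elements shows $\ZZ[\P^{-1}]^\times$ is discrete. (Equivalently, one may run this through the continuous valuation homomorphism $(\nu_p)_{p\in\P}\colon\QQ_\P^\times\to\ZZ^{\P}$, whose open kernel $\prod_{p\in\P}\ZZ_p^\times$ is a compact open subgroup meeting $\ZZ[\P^{-1}]^\times$ exactly in $\{\pm1\}$.)

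There is no serious obstacle here; the only points requiring a little care are the bookkeeping at the prime~$2$ when separating $1$ from $-1$ (handled by taking $e_2=2$ rather than~$1$), and the observation that $U$ is genuinely open in $\QQ_\P^\times$ and not merely in $\QQ_\P$ — which holds because $U$ already lies inside the compact open subgroup $\prod_{p\in\P}\ZZ_p^\times$, so the subspace topology causes no trouble.
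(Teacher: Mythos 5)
Your proof is correct. Both you and the paper start from the same algebraic identification of $\ZZ[\P^{-1}]^\times$ as $\{\pm\prod_{p\in\P}p^{a_p}\}$, but the discreteness arguments are organized differently. The paper fixes an arbitrary $x\in\ZZ[\P^{-1}]^\times$ and shows directly that it is isolated, by estimating $\nu_p(x-x')$ for any distinct $x'$ (splitting into the cases $\nu_p(x')\neq\nu_p(x)$ for some $p$, versus $x'=-x$, where the bound $\nu_p(x-x')\le\nu_p(x)+1$ is obtained). You instead exploit that $\QQ_\P^\times$ is a topological group: it suffices to isolate the identity, which you do by exhibiting the explicit open subset $U=\prod_{p\in\P}(1+p^{e_p}\ZZ_p)$ (with the prime-$2$ tweak $e_2=2$) meeting $\ZZ[\P^{-1}]^\times$ only in $\{1\}$, then translating. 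Your parenthetical route via the valuation map $(\nu_p)_p\colon\QQ_\P^\times\to\ZZ^\P$ with compact open kernel $\prod_p\ZZ_p^\times$ is even more structural and makes transparent why the intersection is $\{\pm1\}$. Both approaches rest on the same $p$-adic valuation estimates; yours is somewhat more conceptual and localizes the bookkeeping entirely at the identity, at the modest cost of having to treat the sign $\pm1$ as a separate step, which the paper's ``$+1$ in the exponent'' trick absorbs uniformly.
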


\begin{proof}
    The invertible elements of $\ZZ[\P^{-1}]$ are the rationals of the form
    $\pm\prod_{p\in\P}p^{k_p}$, where $k_p\in \ZZ$. This makes it clear that $\ZZ[\P^{-1}]^\times$~has the stated isomorphism type; we are only left to check that it lies in $\QQ_\P$ as a discrete subspace. 

    Fix $x\in\ZZ[\P^{–1}]^\times$. Given any distinct $x' \in \ZZ[\P^{-1}]^\times$, suppose that $\nu_p(x')\neq \nu_p(x)$ for some $p\in\P$. Then,
    $$\nu_p(x-x') = \min\{\nu_p(x), \nu_p(x')\} \le \nu_p(x).$$
    If $\nu_p(x) = \nu_p(x')$ for all $p\in \P$, then $x$~and~$x'$ are each other's negative, and so, for all $p\in \P$, we have $\nu_p(x-x') = \nu_p(2x) \le v_p(x)+1$ (the ``$+1$'' accounting for the possibility where $p=2$).

    In either case, we conclude $\nu_p(x-x')\le \nu_p(x)+1$ for some~$p\in \P$, which shows that, with the topology induced from $\QQ_\P$, the point~$x$ is isolated within~$\ZZ[\P^{-1}]^\times$.
\end{proof}

Now let $G$~be the subgroup of~$\B_n(\QQ_p)$ consisting of the matrices whose diagonal entries lie in~$\ZZ[\P^{-1}]^\times$. In other words, $G$~is the subgroup of
$$\begin{pmatrix}
        \ZZ[\P^{-1}]^\times &\QQ_\P&\cdots&\QQ_\P\\
         &\ddots&\ddots&\vdots\\
        &&\ZZ[\P^{-1}]^\times&\QQ_\P\\
        &&&\ZZ[\P^{-1}]^\times
    \end{pmatrix}$$
comprised of the matrices whose diagonal entries multiply to~$1$. Note that (using Lemma~\ref{lem:topology_invertibles}) the above group of matrices is, topologically, a product of totally disconnected locally compact spaces, and hence a TDLC group.
Since $G$~is a closed subgroup, $G$ too is a TDLC group. 

\begin{prop}[A TDLC completion]
\label{prop:G_TDLC_cplt}
    The inclusion $\Gamma \into G$ is a TDLC completion of the Hecke pair~$(\Gamma, \Lambda)$.
\end{prop}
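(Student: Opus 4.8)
The plan is to check the three defining conditions for a TDLC completion: that the inclusion $\Gamma\into G$ is a continuous homomorphism, that it has dense image, and that $\Lambda=\phi^{-1}(L)$ for some compact open subgroup $L\le G$; that $G$ itself is a TDLC group was already observed above. Two of these are immediate. The inclusion is a homomorphism simply because $\Gamma\subseteq G$: a determinant-one upper triangular matrix over $\ZZ[\P^{-1}]$ has diagonal entries that are units of $\ZZ[\P^{-1}]$, and $\ZZ[\P^{-1}]$ embeds in $\QQ_\P$, so such a matrix has exactly the form describing members of~$G$. Continuity is automatic since $\Gamma$ carries the discrete topology.

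For density, I would use that the topology of $G$ is the one coming from the product description displayed above, under which the diagonal entries are topologized discretely (Lemma~\ref{lem:topology_invertibles}) while the off-diagonal entries carry the topology of~$\QQ_\P$. Given $g\in G$ and a basic open neighbourhood of it, one keeps the diagonal of~$g$ unchanged --- it already lies in $\ZZ[\P^{-1}]^\times$ --- and, invoking density of $\ZZ[\P^{-1}]$ in $\QQ_\P$ (Lemma~\ref{lem:dense_subring}), replaces each strictly-upper-triangular entry of~$g$ by a sufficiently close element of $\ZZ[\P^{-1}]$. The resulting matrix is upper triangular with entries in $\ZZ[\P^{-1}]$ and determinant equal to the product of the diagonal entries of~$g$, hence~$1$; thus it belongs to~$\Gamma$ and to the chosen neighbourhood of~$g$.

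It remains to produce~$L$. I would take $L$ to be the set of matrices in~$G$ all of whose strictly-upper-triangular entries lie in $\ZZ_\P$ and all of whose diagonal entries are $\pm1$ --- equivalently $L=G\cap\B_n(\ZZ_\P)$, since the diagonal entries of such an intersection are forced into $\ZZ[\P^{-1}]^\times\cap\ZZ_\P^\times=\{\pm1\}$ by the argument of Lemma~\ref{lem:topology_invertibles}. This is a subgroup of~$G$; it is open because $\{\pm1\}$ is open in the discrete group $\ZZ[\P^{-1}]^\times$ and $\ZZ_\P$ is open in~$\QQ_\P$; and it is compact, being a closed subspace of the compact space $\{\pm1\}^{n}\times\ZZ_\P^{\,n(n-1)/2}$. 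Finally, $\Gamma\cap L$ consists of the matrices in $\Gamma$ with diagonal entries $\pm1$ and off-diagonal entries in $\ZZ[\P^{-1}]\cap\ZZ_\P$; the one arithmetic point to record is that this intersection is exactly~$\ZZ$ (an element of $\ZZ[\P^{-1}]$ with non-negative $p$-adic valuation for every $p\in\P$ has denominator~$1$). Hence $\Gamma\cap L$ is the group of integer upper triangular matrices of determinant~$1$, i.e.\ $\phi^{-1}(L)=\B_n(\ZZ)=\Lambda$, and the reverse inclusion $\Lambda\subseteq L$ is clear.

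None of this is genuinely difficult; the only place demanding a little care is bookkeeping the mixed topology on~$G$ --- discrete along the diagonal, $\P$-adic off the diagonal --- when verifying density and openness of~$L$, together with the two intersection identities $\ZZ[\P^{-1}]\cap\ZZ_\P=\ZZ$ and $\ZZ[\P^{-1}]^\times\cap\ZZ_\P^\times=\{\pm1\}$.
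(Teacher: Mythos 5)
Your proof is correct and takes essentially the same route as the paper: density is deduced from Lemma~\ref{lem:dense_subring}, the subgroup $L$ is the same one the paper uses (matrices with diagonal in $\{\pm1\}$, product~$1$, off-diagonal in $\ZZ_\P$), and the identification $\phi^{-1}(L)=\Lambda$ rests on the same observation that $\ZZ[\P^{-1}]\cap\ZZ_\P=\ZZ$. Your write-up is a bit more explicit about the mixed topology (discrete diagonal, $\P$-adic off-diagonal) and about why $L=G\cap\B_n(\ZZ_\P)$ is compact and open, but the substance is the same.
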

\begin{proof}
    We first show that $\Gamma$~is a dense subgroup of~$G$.
    Since $\Gamma$ may be described as the subgroup of
    $$\begin{pmatrix}
        \ZZ[\P^{-1}]^\times &\ZZ[\P^{-1}]&\cdots&\ZZ[\P^{-1}]\\
         &\ddots&\ddots&\vdots\\
        &&\ZZ[\P^{-1}]^\times&\ZZ[\P^{-1}]\\
        &&&\ZZ[\P^{-1}]^\times
    \end{pmatrix}$$
    where diagonal entries multiply to~$1$, denseness of~$\Gamma$ follows directly from Lemma~\ref{lem:dense_subring}.

    Let $L\subseteq\B_n(\ZZ_\P)$ be the subgroup whose diagonal entries lie in $\ZZ^\times =\{\pm1\}$, so $L$~consists of the matrices in
    $$\begin{pmatrix}
        \ZZ^\times &\ZZ_\P&\cdots& \ZZ_\P\\
        &\ddots &\ddots& \vdots\\
        &&\ZZ^\times & \ZZ_\P\\
        &&&\ZZ^\times
    \end{pmatrix}$$
    with diagonal entries multiplying to~$1$. Since $\ZZ_\P$ is compact and open in~$\QQ_\P$, we see that $L$~is a compact open (hence commensurated) subgroup of~$G$. Hence, we need only show that $\Lambda = \Gamma \cap L$. But this is a direct consequence of the fact that $\ZZ[\P^{-1}] \cap \ZZ_\P$ consists of the elements of~$\ZZ[\P^{-1}]$ with nonnegative $p$-valuation for all $p\in P$, that is, $\ZZ[\P^{-1}] \cap \ZZ_\P = \ZZ$.
\end{proof}
 
\begin{lem}[Computing the normal core]\label{lem:core_description}
    The normal core~$\operatorname{Core}_G(L)$ is the subgroup
    $$N:=\begin{cases} \{\operatorname{Id\}} & \text{for $n$ odd,}\\
    \{\pm\operatorname{Id}\} &\text{for $n$ even.}
    \end{cases}$$
\end{lem}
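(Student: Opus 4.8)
The plan is to establish the two inclusions $N \subseteq \operatorname{Core}_G(L)$ and $\operatorname{Core}_G(L) \subseteq N$. The first is immediate: $\operatorname{Id} \in L$ and, being central in $G$, it lies in every conjugate $gLg^{-1}$; and when $n$~is even the matrix $-\operatorname{Id}$ has all diagonal entries in $\ZZ^\times = \{\pm 1\}$ with product $(-1)^n = 1$, so $-\operatorname{Id} \in L$, and centrality again gives $-\operatorname{Id} \in \operatorname{Core}_G(L)$.

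For the reverse inclusion I would take an arbitrary $A \in \operatorname{Core}_G(L)$. Since $A \in L$, it is upper triangular with diagonal entries $\epsilon_1, \dots, \epsilon_n \in \{\pm 1\}$ (multiplying to $1$) and off-diagonal entries in $\ZZ_\P$. The first step is to show $A$~is diagonal. For a fixed pair $i < j$ and a fixed prime $p \in \P$, let $g_m \in G$ be the diagonal matrix with $(i,i)$-entry $p^{-m}$, $(j,j)$-entry $p^{m}$, and all remaining diagonal entries equal to $1$; this belongs to $G$ since its diagonal entries lie in $\ZZ[\P^{-1}]^\times$ and their product is $1$. Then $(g_m A g_m^{-1})_{ij} = p^{-2m} A_{ij}$, and membership $g_m A g_m^{-1} \in L$ forces $\nu_p(A_{ij}) \ge 2m$ for every $m \ge 1$; hence $A_{ij} = 0$. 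So $A = \operatorname{diag}(\epsilon_1, \dots, \epsilon_n)$. The second step is to show the $\epsilon_i$ are all equal. Fixing $k < l$, for each $x \in \QQ_\P$ the unipotent matrix $u = \operatorname{Id} + x E_{kl}$ (with $E_{kl}$ the elementary matrix) lies in $G$, and one computes $u A u^{-1} = A + x(\epsilon_l - \epsilon_k) E_{kl}$. Since $u A u^{-1} \in L$, we need $x(\epsilon_l - \epsilon_k) \in \ZZ_\P$ for all $x \in \QQ_\P$; but $2 \in \QQ_p^\times$ for every $p$, so if $\epsilon_k \ne \epsilon_l$ then $\epsilon_l - \epsilon_k = \pm 2$ is a unit of $\QQ_\P$ and $x(\epsilon_l - \epsilon_k)$ would range over all of $\QQ_\P \not\subseteq \ZZ_\P$ --- a contradiction. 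Thus $A = \epsilon \operatorname{Id}$ for some $\epsilon \in \{\pm 1\}$, and the determinant condition $\epsilon^n = 1$ forces $\epsilon = 1$ when $n$~is odd. This shows $A \in N$, completing the proof.

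The argument is elementary and I do not expect a genuine obstacle; the only thing to be careful about is selecting conjugating elements that actually witness the required constraints. Killing the off-diagonal entries relies on $G$~containing diagonal matrices of determinant $1$ with entries of unboundedly negative $p$-valuation, while equating the diagonal signs exploits that $G$~contains the full non-compact unipotent root subgroups with entries in all of $\QQ_\P$ --- it is precisely here that the distinction between $\QQ_\P$ and the compact $\ZZ_\P$ (equivalently, between $G$ and $L$) does the work. The degenerate case $n = 1$ is covered automatically, both steps being vacuous and $\epsilon^1 = 1$ giving $\operatorname{Core}_G(L) = \{\operatorname{Id}\} = N$.
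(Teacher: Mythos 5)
Your proof is correct and takes essentially the same approach as the paper: the paper argues by contrapositive (for $A \in L \setminus N$, exhibit $B \in G$ with $BAB^{-1} \notin L$) using the same two families of conjugating matrices you use (diagonal matrices $\operatorname{diag}(\dots,p^{-m},\dots,p^{m},\dots)$ to kill off-diagonal entries, and unipotent matrices $\operatorname{Id}+\alpha E_{ij}$ to rule out sign changes along the diagonal), whereas you run the same computations in the direct direction. The difference is purely presentational.
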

(The even/odd distinction is merely a consequence of the fact that for $n$~odd, the matrix $-\mathrm{Id}$ has determinant $-1$ and so does not lie in~$L$.)

\begin{proof}
    $N$~is central in~$G$, and thus normal, so certainly $N\subseteq \operatorname{Core}_G(L)$.

    For the converse inclusion, it suffices we show that for every matrix $A = (a_{i,j}) \in L$ that does not lie in~$N$, there exists a matrix $B = (b_{i,j}) \in G$ with $BAB^{-1} \not\in L$.

    Suppose first that $A$~is not a diagonal matrix, so for some $1\le i < j\le n$ we have $a_{i,j} \neq 0$. Thus, for some $p\in \P$, the $p$-valuation of the $p$-coordinate in~$a_{i,j}$ is an integer~$k\ge1$. We take~$B$ to be the diagonal matrix with $b_{i,i} = p^{-k}$, $b_{j,j} = p^{k}$, and $1$'s elsewhere. One then checks that the $(i,j)$-entry of~$BAB^{-1}$ is $p^{-2k}a_{i,j}$. Its $p$-coordinate has $p$-valuation $-k<0$, so this coordinate is not in~$\ZZ_p$, whence $BAB^{-1} \not \in L$.

    Now consider the case where $A$~is a diagonal matrix. In particular, since $A \not \in N$, we see $A$~has two entries $a_{i,i} = \pm1, a_{j,j}=\mp1$ (with $i<j$) of opposite sign. Choose any $\alpha \in \QQ_\P$ such that $2\alpha \not \in \ZZ_\P$, and let~$B$ be the elementary matrix with entry~$\alpha$ on the $(i,j)$-coordinate. The $(i,j)$-coordinate of~$BAB^{-1}$ is then $\mp2\alpha$, so again $BAB^{-1} \not \in L$.
\end{proof}

We are now equipped to describe the Schlichting completion~$\G=\Lambda \slcpt \Gamma$.
\begin{prop}[The Schlichting completion]
\label{prop:slcpt_description}
    The Schlichting completion of $(\Gamma, \Lambda)$ is given by $\G\cong G / N$, where the structure map $\alpha \colon \Gamma \to \G \cong G/N$ is induced by the inclusion $\Gamma \into G$.
\end{prop}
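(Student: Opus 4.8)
The plan is to obtain Proposition~\ref{prop:slcpt_description} as a direct consequence of the general criterion in Proposition~\ref{prop:schlichting_from_compl}, which presents the Schlichting completion of a Hecke pair as the quotient of any TDLC completion by the normal core of the compact open subgroup $\overline{\phi(\Lambda)}$. By Proposition~\ref{prop:G_TDLC_cplt} the inclusion $\phi\colon\Gamma\into G$ is such a completion, and by Lemma~\ref{lem:core_description} the relevant normal core is exactly the central subgroup $N$ in the statement. So the only things to pin down are (a) that the ``$L$'' to which Proposition~\ref{prop:schlichting_from_compl} refers, $\overline{\phi(\Lambda)}$, is the explicit compact open subgroup $L\subseteq\B_n(\ZZ_\P)$ used in the proofs of Proposition~\ref{prop:G_TDLC_cplt} and Lemma~\ref{lem:core_description}, and (b) that the Schlichting structure map $\alpha$ is the one induced by the inclusion.

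For (a), the key observation is that for an arbitrary TDLC completion, $\overline{\phi(\Lambda)}$ equals any compact open subgroup $L$ with $\phi^{-1}(L)=\Lambda$: since $L$ is open and $\phi(\Gamma)$ is dense in $G$, the set $\phi(\Gamma)\cap L$ is dense in $L$, and $\phi(\Gamma)\cap L=\phi(\phi^{-1}(L))=\phi(\Lambda)$, so $\overline{\phi(\Lambda)}=L$. (Alternatively, one could verify directly that $\B_n(\ZZ)$ is dense in the explicit $L$, using that $\ZZ$ is dense in $\ZZ_\P$ by the Chinese Remainder Theorem, as in the proof of Lemma~\ref{lem:dense_subring}.) In our situation $\phi$ is the inclusion and $\phi^{-1}(L)=\Gamma\cap L=\Lambda$ was established in the proof of Proposition~\ref{prop:G_TDLC_cplt}, so indeed $\overline{\phi(\Lambda)}=L$.

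Granting (a), Proposition~\ref{prop:schlichting_from_compl} gives $\G\cong G/\operatorname{Core}_G(L)$, which Lemma~\ref{lem:core_description} rewrites as $G/N$. For (b), I would trace the commutative diagram in the proof of Proposition~\ref{prop:schlichting_from_compl} (equivalently, the right-hand triangle of Theorem~\ref{thm:compl}): there $\alpha=\check\phi\circ\phi$, and $\check\phi$ is, under the isomorphism $G/N\cong\G$, just the canonical projection $G\onto G/N$; since $\phi$ is the inclusion, $\alpha$ factors as $\Gamma\into G\onto G/N\cong\G$, i.e.\ it is induced by $\Gamma\into G$.

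I do not expect a real obstacle: the statement is an assembly of Propositions~\ref{prop:G_TDLC_cplt} and~\ref{prop:schlichting_from_compl} with Lemma~\ref{lem:core_description}. The only place that needs a moment's care is point (a) --- making sure the normal core computed in Lemma~\ref{lem:core_description} is taken in the correct group $\overline{\phi(\Lambda)}$ rather than in a proper compact subgroup of it --- and this is dispatched by the density argument above.
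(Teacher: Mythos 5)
Your proof is correct and takes the same route as the paper: Proposition~\ref{prop:slcpt_description} is obtained by combining Propositions~\ref{prop:G_TDLC_cplt} and~\ref{prop:schlichting_from_compl} with Lemma~\ref{lem:core_description}. The only difference is that you spell out the (worthwhile, but easy) verification that $\overline{\phi(\Lambda)}$ coincides with the explicit compact open subgroup $L$ used in those statements, which the paper leaves implicit.
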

\begin{proof}
Having established that $\Gamma \into G$ is a TDLC completion (Proposition~\ref{prop:G_TDLC_cplt}) and that $\operatorname{Core}_G(L) =N$ (Lemma~\ref{lem:core_description}), this is a direct application of Proposition~\ref{prop:schlichting_from_compl}.
\end{proof}

\subsubsection{The $\Sigma$-sets of~$\Gamma$}\label{sec:Sigma_of_Gamma}

Before stating Schesler's characterization of the $\Sigma^k(\Gamma)$, let us describe the character space $\Hom(\Gamma, \RR)$.

\begin{lem}[An invisible subgroup]\label{lem:unipotent_vanishing}
    Every character $\chi \colon \Gamma \to \RR$ vanishes on the subgroup   
    $$U=\begin{pmatrix}
        1 & \ZZ[\P^{-1}] &\cdots&\ZZ[\P^{-1}]\\
        &\ddots &\ddots& \vdots\\
        &&1 & \ZZ[\P^{-1}]\\
        &&&1
    \end{pmatrix}.$$
\end{lem}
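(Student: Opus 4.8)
The plan is to exploit two standard facts about the abelianization of~$\Gamma$. First, $U$ is generated (as a group) by elementary matrices $e_{i,j}(x)$ with $1\le i<j\le n$ and $x\in \ZZ[\P^{-1}]$, so it suffices to show that $\chi\bigl(e_{i,j}(x)\bigr)=0$ for every such generator. Second, any character factors through $\Gamma^{\mathrm{ab}}=\Gamma/[\Gamma,\Gamma]$, and $\RR$ is torsion-free, so it is enough to verify that each $e_{i,j}(x)$ has finite order modulo commutators, or, even more simply, that it literally lies in the commutator subgroup (or becomes divisible in a way incompatible with a nonzero real value).

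The key computation is the well-known commutator identity in a unipotent group: for $i<k<j$ one has
$$\bigl[e_{i,k}(a),\,e_{k,j}(b)\bigr]=e_{i,j}(ab),$$
so whenever $n\ge 3$ every superdiagonal-or-higher elementary matrix $e_{i,j}$ with $j-i\ge 2$ is a commutator, and hence killed by~$\chi$. The remaining, genuinely different case is that of the first superdiagonal, $j=i+1$. Here I would use the diagonal torus: conjugating $e_{i,i+1}(x)$ by the diagonal matrix $t=\operatorname{diag}(\dots,\lambda,\mu,\dots)\in\Gamma$ (with $\lambda$ in position~$i$, $\mu$ in position~$i+1$, all other diagonal entries~$1$, chosen in $\ZZ[\P^{-1}]^\times$ so that $\det t=1$, e.g.\ $\lambda=p$, $\mu=p^{-1}$) gives $t\,e_{i,i+1}(x)\,t^{-1}=e_{i,i+1}(\lambda\mu^{-1}x)=e_{i,i+1}(p^2x)$. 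Applying $\chi$ and using that $\chi$ vanishes on $t$ (it does, because $t$ has finite order modulo... no — rather, simply subtract) yields $\chi(e_{i,i+1}(p^2x))=\chi(e_{i,i+1}(x))$, hence $\chi(e_{i,i+1}(x))=\chi(e_{i,i+1}((p^2-1)x))=(p^2-1)\chi(e_{i,i+1}(x))$ after noting additivity $x\mapsto \chi(e_{i,i+1}(x))$ is a homomorphism $\ZZ[\P^{-1}]\to\RR$; since $p^2-1\neq 1$ this forces $\chi(e_{i,i+1}(x))=0$. (When $n=2$ this torus argument is the whole proof, since there is no room for the commutator identity; for $n=2$ the condition $\det t=1$ still allows $t=\operatorname{diag}(p,p^{-1})$.)

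Assembling these: additivity of $x\mapsto\chi(e_{i,j}(x))$ together with vanishing on a generating set of the value group gives $\chi|_U=0$. The main obstacle — really the only subtle point — is the first-superdiagonal case, where one cannot write $e_{i,i+1}$ as a commutator of elements of~$U$ itself; the fix is to bring in the diagonal part of~$\Gamma$ (legitimate since $\Gamma$ contains those $\ZZ[\P^{-1}]^\times$-valued diagonal matrices with determinant~$1$) and use that conjugation rescales the entry by a unit of the form $p^2$, which cannot fix a nonzero real under the additive homomorphism. Everything else is the routine commutator calculus in $\B_n$.
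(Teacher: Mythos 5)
Your proof is correct, and the key mechanism in the hard case -- conjugating $e_{i,j}(x)$ by a diagonal matrix $\operatorname{diag}(\dots,p,\dots,p^{-1},\dots)\in\Gamma$ to rescale the off-diagonal entry by~$p^{2}$, then using additivity of $x\mapsto\chi(e_{i,j}(x))$ to conclude $(p^2-1)\chi(e_{i,j}(x))=0$ -- is exactly what the paper does. The difference is that you split into cases: commutator calculus $[e_{i,k}(a),e_{k,j}(b)]=e_{i,j}(ab)$ for $j-i\ge 2$, torus conjugation only for the first superdiagonal $j=i+1$. This is sound, but the case split is unnecessary: the torus conjugation works verbatim for \emph{all} pairs $i<j$ (the diagonal matrix with $p$ at position~$i$ and $p^{-1}$ at position~$j$ still scales the $(i,j)$-entry by $p^2$, regardless of $j-i$), and the paper uses this uniform argument, which is shorter. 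One small slip in your write-up: the chain $\chi(e_{i,i+1}(x))=\chi(e_{i,i+1}((p^2-1)x))$ is not what the conjugation gives; rather, $\chi(e_{i,i+1}(p^2x))=\chi(e_{i,i+1}(x))$ together with $\chi(e_{i,i+1}(p^2x))=p^2\chi(e_{i,i+1}(x))$ (additivity plus $p^2\in\ZZ_{\ge 0}$) yields $(p^2-1)\chi(e_{i,i+1}(x))=0$ directly, and the relevant condition is $p^2-1\neq 0$, not $p^2-1\neq 1$. The conclusion is unaffected.
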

\begin{proof}
    For all $1\le i <j\le n$ and $\alpha \in \ZZ[\P^{-1}]$, denote by $E_{i,j}^\alpha$ the matrix obtained from $\mathrm{Id}$ by replacing the~$0$ at entry $(i,j)$ with~$\alpha$. Gauss elimination shows that the matrices~$E_{i,j}^\alpha$ generate~$U$, so it suffices to prove that $\chi$~vanishes on all of them.

    So fix $\alpha$ and $i<j$. Choose any prime $p\in \P$ and let $A \in \Gamma$ be the diagonal matrix with $p$ in the $i$-th diagonal entry, $p^{-1}$~in the $j$-th, and $1$'s elsewhere. A simple computation shows that
    $$AE_{i,j}^\alpha A^{-1} = E_{i,j}^{p^2\alpha} = (E_{i,j}^{\alpha})^{p^2}.$$
    Therefore, $\chi(E_{i,j}^\alpha) = p^2 \chi(E_{i,j}^\alpha)$, whence $\chi(E_{i,j}^\alpha)=0$.
\end{proof}

\begin{prop}[The character space of~$\Gamma$] \label{prop:Hom_basis}
    The character space $\Hom(\Gamma, \RR)$ has an $\RR$-basis~$b$ comprised of the $(n-1)|\P|$ characters
    \begin{align*}
    \chi_{k,p} \colon \Gamma  &\to \RR\\
    (a_{i,j})&\mapsto\nu_p(a_{k+1,k+1})-\nu_p(a_{k,k}),
\end{align*}
where $1\le k\le n-1$ and $p\in \P$.
\end{prop}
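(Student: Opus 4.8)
The plan is to reduce to the diagonal torus, read off the dimension of $\Hom(\Gamma,\RR)$ from the known structure of $\ZZ[\P^{-1}]^\times$, and then certify that the $(n-1)|\P|$ characters $\chi_{k,p}$ form a basis by a dimension count together with a linear-independence check against explicit diagonal matrices. (If $n\le 1$ then $\Gamma$ is the trivial group and $(n-1)|\P|=0$, so the statement is vacuous; assume from now on $n\ge 2$.)

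First I would note that each $\chi_{k,p}$ is indeed a character: for upper-triangular matrices the $(j,j)$-entry of a product is the product of the $(j,j)$-entries, so $\nu_p(a_{j,j})$ is additive and hence so is $\chi_{k,p}$; continuity is automatic since $\Gamma$ is discrete. Next, let $U\subseteq\Gamma$ be the unipotent subgroup from Lemma~\ref{lem:unipotent_vanishing}. It is normal in $\Gamma$ (conjugation by an upper-triangular matrix preserves the diagonal, hence fixes the all-ones diagonal), and $\Gamma/U$ is the diagonal torus $T=\{\operatorname{diag}(d_1,\dots,d_n)\mid d_i\in\ZZ[\P^{-1}]^\times,\ \prod_i d_i=1\}$, which is isomorphic to $(\ZZ[\P^{-1}]^\times)^{n-1}$ via $\operatorname{diag}(d_1,\dots,d_n)\mapsto(d_1,\dots,d_{n-1})$. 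By Lemma~\ref{lem:unipotent_vanishing} every character of $\Gamma$ vanishes on $U$ and therefore factors through $T$, so
$\Hom(\Gamma,\RR)\cong\Hom(T,\RR)\cong\Hom(\ZZ[\P^{-1}]^\times,\RR)^{n-1}$.
By Lemma~\ref{lem:topology_invertibles}, $\ZZ[\P^{-1}]^\times\cong\ZZ/2\oplus\ZZ^{|\P|}$, so $\Hom(\ZZ[\P^{-1}]^\times,\RR)\cong\RR^{|\P|}$, and hence $\dim_\RR\Hom(\Gamma,\RR)=(n-1)|\P|$.

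Since the family $b=(\chi_{k,p})_{1\le k\le n-1,\ p\in\P}$ has exactly $(n-1)|\P|$ members, it now suffices to show that it is linearly independent. For $1\le k_0\le n-1$ and $p_0\in\P$, let $D_{k_0,p_0}\in\Gamma$ be the diagonal matrix with $p_0^{\,n-k_0}$ in its first $k_0$ diagonal entries and $p_0^{\,-k_0}$ in the remaining $n-k_0$ entries; its determinant is $p_0^{\,k_0(n-k_0)}\cdot p_0^{\,-k_0(n-k_0)}=1$, so it lies in $\B_n(\ZZ[\P^{-1}])$. A direct computation with $p$-adic valuations shows that $\chi_{k,p}(D_{k_0,p_0})=0$ unless $p=p_0$ (because $D_{k_0,p_0}$ involves only powers of $p_0$), and for $p=p_0$ that $\chi_{k,p_0}(D_{k_0,p_0})=-n$ if $k=k_0$ and $0$ otherwise. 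Thus the $(n-1)|\P|\times(n-1)|\P|$ matrix with $\big((k,p),(k_0,p_0)\big)$-entry $\chi_{k,p}(D_{k_0,p_0})$ equals $-n$ times the identity matrix; since $n\ne 0$ in $\RR$, it is invertible, so the $\chi_{k,p}$ are linearly independent and therefore form a basis of $\Hom(\Gamma,\RR)$.

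I expect the only mildly delicate part to be the bookkeeping in the last paragraph: checking that $D_{k_0,p_0}$ really lies in $\operatorname{SL}_n(\ZZ[\P^{-1}])$ and that the valuation computation produces exactly $-n\,\delta_{(k,p),(k_0,p_0)}$. Everything else is formal — the reduction to $T$ uses only normality of $U$ and Lemma~\ref{lem:unipotent_vanishing}, and the abelian-group computation of $\Hom(\ZZ[\P^{-1}]^\times,\RR)$ is immediate from Lemma~\ref{lem:topology_invertibles}.
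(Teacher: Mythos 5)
Your proof is correct. The approach is essentially the same as the paper's: both reduce modulo the unipotent subgroup $U$ (Lemma~\ref{lem:unipotent_vanishing}) to a group built from $\ZZ[\P^{-1}]^\times$ and invoke Lemma~\ref{lem:topology_invertibles} to read off the dimension. Where the paper identifies $b$ as the basis dual to an explicit $\ZZ$-basis of the free part of $\operatorname{im}(\varphi)$, you instead verify linear independence directly by evaluating the $\chi_{k,p}$ against the test matrices $D_{k_0,p_0}$, obtaining $-n\cdot\mathrm{Id}$; this is a slightly more hands-on way of carrying out the same final step, and your valuation bookkeeping checks out (entries of $D_{k_0,p_0}$ give $\nu_{p_0}=n-k_0$ on the first $k_0$ slots and $-k_0$ on the rest, so only the jump across $k_0$ contributes).
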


\begin{proof}
The group~$U$ of Lemma~\ref{lem:unipotent_vanishing} is the kernel of the map
$$\varphi\colon \Gamma \to (\ZZ[\P^{-1}]^\times)^n$$
that reads off the diagonal entries, so $\varphi$~induces an isomorphism
$$\varphi^*\colon \Hom(\operatorname{im(\varphi)},\RR) \xrightarrow{\cong} \Hom(\Gamma,\RR).$$

To understand~$\operatorname{im}(\varphi)$, recall the description of~$\ZZ[\P^{-1}]^\times$ from Lemma~\ref{lem:topology_invertibles}, according to which 
$\ZZ[\P^{-1}]^\times\cong \ZZ/2 \oplus \ZZ^{|\P|}$, with the $\ZZ/2$~factor recording signs, and the $\ZZ$~factors tracking the $p$-valuation for each $p\in \P$. The fact that the diagonals of matrices in~$\Gamma$ are constrained (only) to having product equal to~$1$ means that, once we identify
$$(\ZZ[\P^{-1}]^\times)^n\cong (\ZZ/2)^n \oplus (\ZZ^n)^{|\P|},$$
$\operatorname{im}(\varphi)$~is generated by the~$n-1$ tuples of the form
$(0,\ldots,0,1,1,0,\ldots,0) \in (\ZZ/2)^n$ and the $(n-1)|\P|$ tuples of the form
$$(\underbrace{\underbrace{(0,\ldots,0)}_{\text{$n$~entries}},\ldots,\underbrace{(0,\ldots,0)}_{\text{$n$~entries}},\underbrace{(0,\ldots,0,1,-1,0,\ldots,0)}_{\text{$n$~entries}},\underbrace{(0,\ldots,0)}_{\text{$n$~entries}},\ldots,\underbrace{(0,\ldots,0)}_{\text{$n$~entries}}}_{\text{$|\P|$~tuples}})$$
in $(\ZZ^n)^{|\P|}$. 

The $\ZZ$-free part of~$\operatorname{im}(\varphi)$ is thus isomorphic to $(\ZZ^{n-1})^{|\P|}$, with basis the above tuples. The basis of $\Hom(\operatorname{im}(\varphi), \RR) \cong \Hom((\ZZ^{n-1})^{|\P|}, \RR)$ dual to it corresponds under $\varphi^*$ to~$b$.
\end{proof}

Define $C\subseteq \Hom(\Gamma, \RR)$ as the set of nonzero characters $\sum_{\chi\in b}\lambda_\chi \chi$ with all coefficients $\lambda_\chi$ nonnegative (so $C$~is obtained from a closed cone in $\Hom(\Gamma, \RR)$ by removing the vertex point~$0$). For each $k\ge 0$, we also denote by~$C^{(k)}$ the subset of~$C$ with at most $k$~nonzero coefficients. Schesler's description of the~$\Sigma^k(\Gamma)$ \cite[Theorems 1~and~2]{S23} goes as follows:

\begin{thm}[The homotopical $\Sigma$-sets of~$\Gamma$] \label{thm:Schesler}
Let $k\in\NN$. Then:
\begin{enumerate}
    \item $\Sigma^\infty (\Gamma) = \Hom(\Gamma, \RR) \setminus C$ (in particular, $\Gamma$~is of type $\mathrm F_\infty$).
    \item $\Sigma^k(\Gamma) \subseteq \Hom(\Gamma, \RR) \setminus C^{(k)}$.
    \item If every $p\in \P$ satisfies $p\ge 2^{n-2}$, then $\Sigma^k(\Gamma) = \Hom(\Gamma, \RR) \setminus C^{(k)}$.
\end{enumerate}
\end{thm}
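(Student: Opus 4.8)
Since this is Schesler's theorem \cite{S23}, I only sketch the strategy by which one would prove it. The first ingredient is a geometric model for $\Gamma=\B_n(\ZZ[\P^{-1}])$. For each $p\in\P$ let $X_p$ be the Bruhat--Tits building of $\mathrm{SL}_n(\QQ_p)$; it is contractible, and by the Iwasawa decomposition $\mathrm{SL}_n(\QQ_p)=\B_n(\QQ_p)\cdot\mathrm{SL}_n(\ZZ_p)$ the Borel acts cocompactly on it. Using the Dirichlet $S$-unit theorem -- which makes the diagonal torus over $\ZZ[\P^{-1}]$ a cocompact lattice in the product of the $\QQ_p$-tori -- one checks that $\Gamma$ acts cocompactly on $X:=\prod_{p\in\P}X_p$, with cell stabilizers commensurable with $\B_n(\ZZ)$, which is polycyclic-by-finite and hence of type $\mathrm F_\infty$. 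A standard argument (blowing up the stabilizers into finite-type classifying spaces, or invoking the relative Brown criterion) then shows $\Gamma$ is of type $\mathrm F_\infty$, the parenthetical claim in part~(1), and reduces the computation of $\Sigma^k(\Gamma)$ and $\Sigma^k(\Gamma;R)$ to a Morse function on $X$. Every character vanishes on the unipotent radical (Lemma~\ref{lem:unipotent_vanishing}), so $\chi=\sum_{k,p}\lambda_{k,p}\chi_{k,p}$ factors through the diagonal torus and restricts on each $X_p$ to an affine, $\B_n(\QQ_p)$-equivariant height function $h_{\chi,p}$; set $h_\chi:=\sum_p h_{\chi,p}$. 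By the description of the classical $\Sigma$-sets recalled at the end of Section~\ref{sec:Sigma} -- the superlevel filtration $(X_{h_\chi\ge r})_{r\in\RR}$ of a contractible $\Gamma$-complex -- membership $\chi\in\Sigma^k(\Gamma)$, resp.\ $\chi\in\Sigma^k(\Gamma;R)$, is equivalent to essential $(k-1)$-connectedness, resp.\ $(k-1)$-acyclicity over~$R$, of this filtration.

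By Bestvina--Brady--Brown Morse theory on $X$ with $h_\chi$, this filtration is essentially $(k-1)$-connected exactly when all descending links of $h_\chi$ are $(k-1)$-connected (and essentially $(k-1)$-acyclic over~$R$ when they are). Since $h_\chi$ splits over the factors, the descending link at a vertex $v=(v_p)_p$ is a join over $p\in\P$ of ``relative links'' $L_p(v_p)\subseteq\operatorname{lk}_{X_p}(v_p)$, where $\operatorname{lk}_{X_p}(v_p)$ is a spherical building of type $A_{n-1}$ and $L_p(v_p)$ is the subcomplex cut out by the position, relative to the chamber at infinity fixed by $\B_n(\QQ_p)$, of the boundary point toward which $h_{\chi,p}$ points. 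The connectivity of $L_p(v_p)$ is governed by how singular that boundary point is -- i.e.\ how many of the coefficients $\lambda_{k,p}$ vanish, and their signs -- and by the residue-field size $q_p=p$.

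With this reduction, the upper bounds -- part~(2), and the inclusion ``$\subseteq$'' in part~(1) as the case $k=\infty$ -- are obtained by exhibiting low-dimensional homology in these links: if $\chi\in C^{(k)}$, at most $k$ of the coefficients are nonzero, so at a suitable vertex the descending link deformation-retracts onto a join of at most $k$ nontrivial pieces, has dimension $\le k-1$, and carries a nonzero class in its $(k-1)$st reduced homology -- the prototype being the $p$-point ``fans'' one already meets as descending links in $\mathrm{SL}_2$-factors (whose $k$-fold join has nonvanishing $\redH_{k-1}$). Hence the filtration fails to be essentially $(k-1)$-connected and $\chi\notin\Sigma^k(\Gamma)$; taking $k$ to be the number of nonzero coefficients gives $C\cap\Sigma^\infty(\Gamma)=\emptyset$. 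Conversely, if $\chi\notin C$ then on some factor $h_{\chi,p}$ is (a positive multiple of) a genuine Busemann function, whose descending links are contractible; so every descending link of $h_\chi$ is contractible and Brown's criterion yields $\chi\in\Sigma^\infty(\Gamma)$. Together with type $\mathrm F_\infty$ this completes part~(1).

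The sharp lower bound, part~(3), is the crux and the main obstacle. Its content is: if $\chi\in C$ has more than $k$ nonzero coefficients and every $p\in\P$ satisfies $p\ge 2^{n-2}$, then the descending links of $h_\chi$ are in fact $(k-1)$-connected, so Brown's criterion gives $\chi\in\Sigma^k(\Gamma)$. Establishing this requires a second layer of combinatorics inside each building factor: one bounds the connectivity of the relative link $L_p(v_p)$ -- a subcomplex of a spherical building of rank $n-1$ cut out by a face at infinity -- by a further Morse-theoretic or nerve argument, fibering $L_p(v_p)$ over a poset of cells and estimating the connectivity of the cells' links in terms of $q_p$. These estimates degrade as more simple roots are deleted (i.e.\ as the boundary point becomes more singular) but improve as $q_p$ grows, and propagating the constants through the $n-1$ simple roots yields exactly the threshold $p\ge 2^{n-2}$. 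The gap between parts~(2) and~(3) reflects that this method is lossy for small primes, where the exact value of $\Sigma^k(\Gamma)$ remains open.
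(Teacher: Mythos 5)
The paper does not prove Theorem~\ref{thm:Schesler}; it is imported verbatim as a citation of Schesler's work \cite[Theorems 1~and~2]{S23}, so there is no in-text argument to compare your sketch against. That said, your outline is a reasonable reconstruction of the standard machinery for $\Sigma$-invariants of solvable $S$-arithmetic groups: products of Bruhat--Tits buildings, a height function induced by the character through the torus, descending-link Morse theory, Brown's criterion for actions with stabilizers of type $\mathrm F_\infty$ (here commensurable with $\B_n(\ZZ)$, which is polycyclic-by-finite, exactly as the paper establishes separately in Lemma~\ref{lem:Lambda_F_infty}), and a join decomposition of descending links into relative links in spherical buildings at each place. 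This matches the structure of the statement (an upper bound for all $k$ in part~(2), a matching lower bound under a residue-field condition in part~(3), and the $k=\infty$ case in part~(1)). The one substantive gap you flag yourself: the derivation of the threshold $p\ge 2^{n-2}$ in part~(3) is only gestured at, and it is exactly the nontrivial combinatorics controlling the connectivity of the relative links that carries the content of Schesler's paper. Without consulting \cite{S23} directly I cannot certify that your description of how the constant propagates agrees with his actual argument, but the overall architecture you describe is consistent and plausible.
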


\subsubsection{The $\Sigma$-sets of~$G$}

In order to apply Theorem~\ref{thm:main} to describe the $\TopS^k(\G)$, we need to establish finiteness properties on~$\Lambda$, which we shall do now.

\begin{lem}[The finiteness properties of~$\Lambda$]\label{lem:Lambda_F_infty}
    $\Lambda$ is of type~$\mathrm F_\infty$.
\end{lem}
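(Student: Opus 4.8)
The plan is to realise $\Lambda = \B_n(\ZZ)$ as an extension of a finite group by a finitely generated nilpotent group, and then invoke the closure of type~$\mathrm F_\infty$ under group extensions.

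First I would record the relevant group-theoretic structure. Sending a matrix to its tuple of diagonal entries defines a homomorphism $\B_n(\ZZ)\to\{\pm1\}^n$; since these entries are units of~$\ZZ$ whose product is~$1$, its image is the subgroup $\{(\epsilon_1,\dots,\epsilon_n)\in\{\pm1\}^n\mid\prod_i\epsilon_i=1\}\cong(\ZZ/2)^{n-1}$, and its kernel is the group $U_n(\ZZ)$ of upper \emph{uni}triangular integer matrices (those with all diagonal entries equal to~$1$). This produces a short exact sequence
\[
1\longrightarrow U_n(\ZZ)\longrightarrow \B_n(\ZZ)\longrightarrow (\ZZ/2)^{n-1}\longrightarrow 1 .
\]

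Next I would argue that both outer terms are of type~$\mathrm F_\infty$. The group $(\ZZ/2)^{n-1}$ is finite, hence of type~$\mathrm F_\infty$. The group $U_n(\ZZ)$ is finitely generated torsion-free nilpotent — it is generated by the elementary matrices $E_{i,j}^{1}$ with $i<j$, and its lower central series terminates after $n-1$ steps. Such a group is poly-(infinite cyclic), so it admits a finite-dimensional classifying space and is of type~$\mathrm F$, in particular of type~$\mathrm F_\infty$. Since type~$\mathrm F_\infty$ is preserved under extensions, it follows that $\Lambda=\B_n(\ZZ)$ is of type~$\mathrm F_\infty$. (Alternatively, one may observe that $U_n(\ZZ)$ has finite index $2^{n-1}$ in~$\Lambda$ and use that type~$\mathrm F_\infty$ is a commensurability invariant.)

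I do not expect a genuine obstacle here: the argument is entirely standard finiteness-property bookkeeping. The only points that require checking are the elementary facts that $U_n(\ZZ)$ is the kernel of the diagonal map, that the corresponding quotient is $(\ZZ/2)^{n-1}$, and that $U_n(\ZZ)$ is finitely generated nilpotent — after which the conclusion is immediate.
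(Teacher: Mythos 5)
Your proof is correct and takes essentially the same route as the paper: reduce to the unitriangular subgroup $U_n(\ZZ)$ via the finite (diagonal-sign) quotient, then show $U_n(\ZZ)$ is of type~$\mathrm F_\infty$. The only cosmetic difference is that you invoke the general fact that finitely generated torsion-free nilpotent groups are poly-$\ZZ$ and hence of type~$\mathrm F$, whereas the paper spells out the iterated central-series extensions by $\ZZ^{n-1}, \ZZ^{n-2}, \dots$ explicitly.
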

\begin{proof}
    The diagonal of each matrix in~$\Gamma$ is a sequence of $\pm1$'s that multiply to~$1$. The kernel~$U$ of the map $\Gamma \to (\ZZ/2)^n$ reading this sequence of signs is thus a finite index subgroup of~$\Gamma$, and so we need only show that $U$~is of type~$\mathrm F_\infty$.
    
    Now, $U$~is the middle term of the short exact sequence
    $$1 \to \begin{pmatrix}
        1 &0 &\ZZ&\ZZ&\cdots&\ZZ\\
         & 1  & 0 & \ZZ& \cdots &\ZZ\\
         &&\ddots&\ddots&\ddots&\vdots\\
         &&&1&0&\ZZ\\
        &&&&1&0\\
         &&&&&1
    \end{pmatrix}
    \to 
    \begin{pmatrix}1 &\ZZ &\ZZ&\ZZ&\cdots&\ZZ\\
         & 1  & \ZZ & \ZZ& \cdots &\ZZ\\
         &&\ddots&\ddots&\ddots&\vdots\\
         &&&1&\ZZ&\ZZ\\
        &&&&1&\ZZ\\
         &&&&&1
    \end{pmatrix} \to \ZZ^{n-1} \to 1,$$ where the map onto~$\ZZ^{n-1}$ reads the entries above the diagonal.
    Since $\ZZ^{n-1}$~is of type~$\mathrm F_\infty$ and finiteness properties pass to extensions, one is reduced to showing that the group~$U'$ on the left is of type~$\mathrm{F_\infty}$. To do that we may similarly surject $U'\onto \ZZ^{n-2}$ by reading the entries in the second off-diagonal. Proceeding inductively in this fashion one eventually reaches the trivial group, which is of course of type~$\mathrm F_\infty$.
\end{proof}

Recall the description of $\G$ as~$G/K$ from Proposition~\ref{prop:slcpt_description}.
Using the same argument as in Lemma~\ref{lem:unipotent_vanishing} and Propostion~\ref{prop:Hom_basis} (or appealing to Corollary~\ref{cor:char_space}), we see that $\Homtop(G, \RR)$ has a basis~$\bar b$ comprised of characters $\bar\chi_{k,p} \colon G \to \RR$ with $1 \le k\le n-1$ and $p\in\P$, whose definition is verbatim the same as that of the~$\chi_{k,p}$ from Proposition~\ref{prop:Hom_basis}.

Using the canonical identification
$\Homtop(G, \RR) \cong \Homtop(\G, \RR)$
given by Corollary~\ref{cor:char_space}, we define the ``vertex-less cone'' $\bar C \subseteq \Homtop(\G, \RR)$ as in Section~\ref{sec:Sigma_of_Gamma}, and its $k$-dimensional strata $\bar C^{(k)}$. The fact that $\Lambda$~is of type $\mathrm F_\infty$ (Lemma~\ref{lem:Lambda_F_infty}) and our main result (Theorem~\ref{thm:main}) allow us to transfer Schesler's  Theorem~\ref{thm:Schesler} to~$\G$. We thus conclude:

\begin{prop}[The homotopical $\Sigma$-sets of~$\G$]\label{prop:Schesler_main}
    Let $k\in\NN$. Then:
    \begin{enumerate}
        \item $\TopS^\infty (\G) = \Homtop(\G, \RR) \setminus \bar C$.
        \item $\TopS^k(\G) \subseteq \Homtop(\G, \RR) \setminus \bar C^{(k)}$.
        \item If every $p\in \P$ satisfies $p\ge 2^{n-2}$, then $\TopS^k(\G) = \Homtop(\G, \RR) \setminus \bar C^{(k)}$.
    \end{enumerate}
\end{prop}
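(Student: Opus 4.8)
The plan is to combine the machinery already in place with Schesler's Theorem~\ref{thm:Schesler}. The starting observations are that $\Gamma$ is discrete, hence locally compact Hausdorff; that $\Lambda$ is of type~$\mathrm F_\infty$ by Lemma~\ref{lem:Lambda_F_infty}; and that for a discrete group being of type~$\mathrm F_n$ is the same as being of type~$\mathrm C_n$ (as follows from the Propositions ``The zero character'' and ``$\Sigma$-sets for discrete groups''). Thus $\Lambda$ is of type~$\mathrm C_n$ for \emph{every} $n\in\NN$. Since $(\Gamma,\Lambda)$ is a Hecke pair whose Schlichting completion is $\alpha\colon\Gamma\to\G$ (Proposition~\ref{prop:slcpt_description}), and the Schlichting completion is itself a TDLC completion, Theorem~\ref{thm:main} applies with $G=\G$; because $\Lambda$ is of type~$\mathrm C_n$ for all~$n$, choosing $n\ge k$ makes items~(1) and~(2) of that theorem combine into the equivalence
\[
\bar\chi\in\TopS^k(\G)\iff \alpha^*(\bar\chi)\in\Sigma^k(\Gamma)\qquad\text{for every }k\in\NN,
\]
where $\alpha^*$ is the map of Corollary~\ref{cor:char_space}. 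Equivalently, $\TopS^k(\G)=(\alpha^*)^{-1}\bigl(\Sigma^k(\Gamma)\bigr)$.

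Next I would record two points about $\alpha^*$. First, $\alpha^*$ is an isomorphism $\Homtop(\G,\RR)\xrightarrow{\cong}\Hom(\Gamma,\RR)_\Lambda$, and $\Hom(\Gamma,\RR)_\Lambda$ is all of $\Hom(\Gamma,\RR)$: the group $\Lambda=\B_n(\ZZ)$ is generated by its torsion diagonal matrices (killed by any character) together with the elementary unipotent matrices (killed by any character of~$\Gamma$ by Lemma~\ref{lem:unipotent_vanishing}), so every character of~$\Gamma$ vanishes on~$\Lambda$. Second, comparing the defining formulas of $\bar\chi_{k,p}$ and $\chi_{k,p}$ shows that $\alpha^*(\bar\chi_{k,p})=\chi_{k,p}$; hence $\alpha^*$ carries the basis $\bar b$ bijectively onto $b$, and therefore carries the pointed cone $\bar C$ and each stratum $\bar C^{(k)}$ exactly onto $C$ and $C^{(k)}$.

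The rest is substitution. Since taking preimages commutes with intersections, $\TopS^\infty(\G)=\bigcap_k\TopS^k(\G)=(\alpha^*)^{-1}\bigl(\bigcap_k\Sigma^k(\Gamma)\bigr)=(\alpha^*)^{-1}\bigl(\Sigma^\infty(\Gamma)\bigr)$, so Theorem~\ref{thm:Schesler}(1) together with the identification of cones gives claim~(1). Applying $(\alpha^*)^{-1}$ to the inclusion $\Sigma^k(\Gamma)\subseteq\Hom(\Gamma,\RR)\setminus C^{(k)}$ of Theorem~\ref{thm:Schesler}(2) gives claim~(2), and under the hypothesis that $p\ge 2^{n-2}$ for all $p\in\P$, the equality in Theorem~\ref{thm:Schesler}(3) pulls back to claim~(3).

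I do not anticipate a genuine obstacle: all the substance is already packaged into Theorem~\ref{thm:main}, Lemma~\ref{lem:Lambda_F_infty} and Theorem~\ref{thm:Schesler}. The two points worth explicit verification are that every character of~$\Gamma$ really does vanish on~$\Lambda$ (so that the character-space isomorphism of Corollary~\ref{cor:char_space} is onto all of $\Hom(\Gamma,\RR)$, matching the statement's use of the full character space), and that type~$\mathrm F_\infty$ lets one run Theorem~\ref{thm:main} at every level, turning its two one-directional implications into an equivalence for each $k$ and, via the intersection, for $k=\infty$. If one prefers, the transfer can instead be routed through the TDLC completion $\phi\colon\Gamma\to G$ of Proposition~\ref{prop:G_TDLC_cplt} together with Proposition~\ref{prop:completions_have_same_Sigma}, but since $\G$ is itself a TDLC completion this detour is unnecessary.
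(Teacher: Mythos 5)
Your proof is correct and follows the same route the paper takes (transfer Schesler's Theorem~\ref{thm:Schesler} along $\alpha^*$ via Theorem~\ref{thm:main}, using Lemma~\ref{lem:Lambda_F_infty} to get $\Lambda$ of type~$\mathrm F_\infty$, hence~$\mathrm C_n$ for all~$n$, so the two implications of Theorem~\ref{thm:main} combine to an equivalence at every level). You spell out two points the paper leaves implicit --- that every character of~$\Gamma$ vanishes on~$\Lambda$, so $\Hom(\Gamma,\RR)_\Lambda=\Hom(\Gamma,\RR)$, and that $\alpha^*$ matches the basis~$\bar b$ with~$b$ and hence $\bar C^{(k)}$ with~$C^{(k)}$ --- and both are sound and worth stating.
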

   
\appendix 
\section{On the definition of the homological $\Sigma$-sets}
\label{sec:appendix}

The definition of the homological $\Sigma$-sets given in Section~\ref{sec:Sigma} is not verbatim the same as the one introduced  by Bux, Hartmann and the second author \cite[Definition~5.1]{BHQ24b}. 
The purpose of this appendix is to give a faithful account of that definition and connect it to Definition~\ref{dfn:sigma} of the present article (though this equivalence is by no means novel to experts). We will make use of some basic notions in coarse geometry, which can be found, for example, in the book of Leitner and Vigolo \cite[Chapter~2]{LV23}.

For the rest of this appendix, fix a locally compact Hausdorff group~$G$ and a commutative ring~$R$.

The topology of~$G$ induces a coarse structure as follows: for each $C\in \C(G)$, we define
$$\Delta_C:=\{(x,y)\in G\times G \mid x^{-1}y\in C\},$$
and the desired coarse structure on~$G$ is generated by $\{\Delta_C \mid C\in \C(G)\}$. Group multiplication and inversion, being continuous, are controlled maps for this coarse structure, making~$G$ a coarsified set-group \cite[p.~56]{LV23}.

For each $n\in \NN$, the original definition of~$\TopS^n(G;R)$ is given under the additional assumption that $G$~is \textbf{$\sigma$-compact}, that is, $G$~is a countable union of compact subspaces (for example, this is easily seen to hold if $G$~is compactly generated). This permits expressing its coarse structure using a metric:

\begin{lem}[Metric from coarse structure]
    If $G$~is $\sigma$-compact, then its coarse structure is induced by a $G$-bi-invariant metric.
\end{lem}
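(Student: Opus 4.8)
The plan is to realise the coarse structure by a \emph{length function} attached to a suitable exhaustion of~$G$ by compact sets, in the spirit of the classical construction of proper invariant metrics on $\sigma$-compact locally compact groups. First I would fix, using local compactness together with $\sigma$-compactness, an increasing chain $\{e\}=K_0\subseteq K_1\subseteq K_2\subseteq\cdots$ of compact \emph{symmetric} subsets of~$G$ satisfying $\bigcup_{n}K_n=G$, $K_n\subseteq\operatorname{int}(K_{n+1})$, and $K_nK_n\subseteq K_{n+1}$ for every~$n$. Such a chain is built recursively: start from an arbitrary compact exhaustion $G=\bigcup_n L_n$ and a compact symmetric neighbourhood~$K_1$ of~$e$, and then take $K_{n+1}$ to be any compact symmetric neighbourhood of the compact set $K_nK_n\cup L_{n+1}$. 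To obtain a \emph{bi}-invariant (and not merely left-invariant) metric I would moreover require each~$K_n$ to be invariant under conjugation by~$G$; this additional requirement is the delicate point, discussed at the end.

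Given such an exhaustion, I would define $\ell\colon G\to\NN$ by $\ell(g):=\min\{n\mid g\in K_n\}$. The routine checks are: $\ell(g)=0$ if and only if $g=e$; $\ell(g^{-1})=\ell(g)$, since the~$K_n$ are symmetric; $\ell(gh)\le\ell(g)+\ell(h)$, since if $1\le\ell(g)\le\ell(h)=:n$ then $gh\in K_nK_n\subseteq K_{n+1}$ (the case $\ell(g)=0$ being trivial); and $\ell(gxg^{-1})=\ell(x)$, by conjugation-invariance of the~$K_n$. Then I would put $d(x,y):=\ell(x^{-1}y)$: the first three properties make~$d$ a genuine metric, it is left-invariant because $(gx)^{-1}(gy)=x^{-1}y$, and it is right-invariant because $(xg)^{-1}(yg)=g^{-1}(x^{-1}y)g$ together with conjugation-invariance of~$\ell$. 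Hence $d$ is $G$-bi-invariant.

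It then remains to verify that~$d$ generates the coarse structure of the statement. In one direction, for each $r\ge 0$ the $d$-bounded entourage $\{(x,y)\mid d(x,y)\le r\}$ equals $\Delta_{K_{\lfloor r\rfloor}}$, hence is controlled. In the other direction, given $C\in\C(G)$, the open sets $\operatorname{int}(K_n)$ cover~$G$, so compactness of~$C$ yields some~$N$ with $C\subseteq K_N$, whence $\Delta_C\subseteq\Delta_{K_N}=\{(x,y)\mid d(x,y)\le N\}$ is $d$-bounded. Thus an entourage is controlled for the group coarse structure exactly when it is $d$-bounded, which is the assertion.

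The step I expect to be the main obstacle is the conjugation-invariance of the exhaustion used in the first paragraph — equivalently, the assertion that every compact subset of~$G$ is contained in a compact, conjugation-stable subset of~$G$. I note that dropping this requirement already yields a proper left-invariant metric inducing the coarse structure, which is all that is needed to set up the coarse geometry used below; the conjugation-invariant refinement, needed for genuine bi-invariance, is where the argument must work hardest, and I would treat it as the crux of the proof.
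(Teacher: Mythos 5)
Your construction of the chain $K_0\subseteq K_1\subseteq\cdots$ of compact symmetric sets with $K_nK_n\subseteq K_{n+1}$ and $K_n\subseteq\operatorname{int}(K_{n+1})$, and the resulting length function $\ell$, is a perfectly good way to produce a proper \emph{left}-invariant metric inducing the coarse structure; in effect it replaces the paper's Baire-category argument (which extracts a compact set $K$ with nonempty interior from an arbitrary compact exhaustion) by a direct use of local compactness, and the two routes to a cofinal countable family of compacta are essentially interchangeable. The paper then simply invokes Leitner--Vigolo's metrizability result \cite[Lemma~8.2.1]{LV23} rather than constructing the metric by hand, so up to this point your proposal is a legitimately different, more self-contained proof of the ``countably generated $\Rightarrow$ metrizable'' part.

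The problem is the step you flag as ``the crux'': requiring each $K_n$ to be conjugation-invariant. This is not a hard step to be worked out later --- it is in general impossible, so the proposed route to bi-invariance breaks down entirely. The requirement is equivalent to every compact subset of $G$ being contained in a compact conjugation-stable set, and that already fails for $G=\operatorname{SL}_2(\QQ_p)$ (or $\operatorname{SL}_2(\RR)$). Indeed any conjugation-stable set containing a neighbourhood of the identity contains some nontrivial unipotent $u=\left(\begin{smallmatrix}1&\varepsilon\\0&1\end{smallmatrix}\right)$, and conjugating $u$ by the diagonal matrices $\left(\begin{smallmatrix}a&0\\0&a^{-1}\end{smallmatrix}\right)$ produces $\left(\begin{smallmatrix}1&a^2\varepsilon\\0&1\end{smallmatrix}\right)$, an unbounded family; so no such compact set exists. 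Consequently no genuinely bi-invariant length function can have precompact sublevel sets $\{\ell\le r\}$ for this $G$, and your construction cannot yield the claimed bi-invariant metric inducing the coarse structure. (This also shows the obstruction is not peculiar to your construction: any argument producing a literally $G$-bi-invariant metric whose entourages are controlled would have to supply exactly these impossible conjugation-stable compacta, which strongly suggests that whatever \cite[Lemma~8.2.1]{LV23} delivers, the invariance it asserts is weaker than the naive reading of ``bi-invariant''. The paper's proof sidesteps the issue by handing it to that reference, so you should consult it before deciding what invariance you actually need to establish.) As things stand, your proposal proves the left-invariant version cleanly but has a genuine gap --- indeed a fatal one for the strategy as written --- on the bi-invariance.
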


This statement is essentially the second part of a lemma in an earlier paper \cite[Lemma~4.2]{BHQ24b}; here we provide a somewhat more detailed argument. 

\begin{proof}
    The existence of the desired metric follows once we show that the coarse structure is countably generated \cite[Lemma~8.2.1]{LV23}.

    Using $\sigma$-compactness, write $G=\bigcup_{i\in \NN} C_i$ with $C_i\in \C(G)$ and assume (by adding to each~$C_i$ the union of its predecessors) that this is an ascending union. Since locally compact Hausdorff groups are Baire spaces \cite[Chapter~IX, Theorem~1]{Bou98}, $G$~is not a countable union of subsets with empty interior. Thus, one of the~$C_i$, call it~$K$, has nonempty interior~$U$. Now the sets $V_i:= C_iU$ form an ascending open cover of~$G$, so each compact subset of~$G$ is contained in some~$V_i\subseteq C_iK$. On the other hand, $C_iK$~is itself compact. From here it follows at once that the coarse structure on~$G$ is generated by the (countably many) controlled sets~$\Delta_{C_iK}$.
\end{proof}

It should be stressed that such a metric~$d$ does not generally induce the topology of~$G$. For an example of such~$d$ one should rather consider the word metric with respect to some compact generating set (in the case where $G$~is compactly generated).

Once we choose a metric~$d$ inducing the coarse structure of~$G$, we define, for each $r\in \NN$, the \textbf{Vietoris-Ripps complex} $\VR_r(G)$, which is the simplicial subset of~$\E G$ whose $k$-simples are the tuples $(g_0,\ldots,g_k)$ such that the distance~$d(g_i, g_j)$ between each two entries is at most~$r$. These simplicial sets are assembled into the \textbf{Vietoris-Ripps filtration} $(\VR_r(G))_{r\in \NN}$ of~$\E G$.

We now present the original definition of the homological $\Sigma$-sets \cite[Definition~5.1]{BHQ24b}. As usual, given a character $\chi \colon G\to \RR$ and a simplicial subset~$X\subseteq \E G$, we denote by~$X_\chi$ the subcomplex of~$X$ spanned by the vertices with nonnegative $\chi$-value.

\begin{dfn}\label{def:original_homological}
    Suppose $G$~is equipped with a metric inducing its coarse structure. For each $n\in \NN$, membership of a character $\chi \colon G\to \RR$ in $\TopS^n(G;R)$ is defined by:
    \begin{enumerate}
        \item for $n=1$, we say $\chi \in \TopS^1(G;R)$ if for some $r\in \NN$ we have $\redH_0(\VR_r(G)_\chi; R)=0$;
        \item for $n\ge 2$, we say $\chi\in \TopS^n(G;R)$ if $\chi\in \TopS^1(G;R)$ and the filtration $(\VR_r(G)_\chi)_{r\in\NN}$ of $\E G_\chi$ is essentially $(n-1)$-acyclic over~$R$. 
    \end{enumerate}
\end{dfn}

A priori, it might seem like this definition depends on the choice of metric. We will see that this is not the case as we make our way to proving the main result of this appendix:

\begin{prop}[$\TopS^n(G;R)$ via $(G_\chi \cdot \E C)_{C\in \C(G)}$]\label{prop:appendix_main}
    Suppose $G$~is equipped with a metric inducing its coarse structure and let $n\in \NN$. Then, a character $\chi\colon G\to \RR$ lies in $\TopS^n(G;R)$ (in the sense of Definition~\ref{def:original_homological}) if and only if the filtration $(G_\chi \cdot \E C)_{C\in\C(G)}$ of~$\E G$ is essentially $(n-1)$-acyclic over~$R$.
\end{prop}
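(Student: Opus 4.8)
The plan is to prove that the Vietoris--Rips filtration $(\VR_r(G)_\chi)_{r\in\NN}$ of $\E G_\chi$ and the filtration $(G_\chi\cdot\E C)_{C\in\C(G)}$ of $\E G$ are simplicially homotopy equivalent in the sense of Definition~\ref{dfn:hoequiv}, so that Lemma~\ref{lem:homotopyequiv}(2) transfers essential $R$-acyclicity between them, and then to reconcile the degree-zero mismatch built into Definition~\ref{def:original_homological} (the separate clause for $\TopS^1(G;R)$) using the special behaviour of $\redH_0$ for Vietoris--Rips complexes. Observe first that both $\E G_\chi$ and $\E G$ are nonempty and contractible, hence connected (as $1\in G_\chi$), and that $\bigcup_C G_\chi\cdot\E C=\E G$ whereas $\bigcup_r\VR_r(G)_\chi=\E G_\chi$, so the two filtrations genuinely live on different simplicial sets.

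Write $B_r:=\{g\in G\mid d(1,g)\le r\}$ for the closed $r$-ball. Since $d$ induces the coarse structure of $G$, which is generated by the $\Delta_C$ with $C\in\C(G)$, every $B_r$ lies in some $C\in\C(G)$ and every $C\in\C(G)$ lies in some $B_r$; and by bi-invariance a simplex $(g_0,\dots,g_k)$ lies in $\VR_r(G)_\chi$ exactly when all $g_i\in G_\chi$ and $g_i^{-1}g_j\in B_r$ for all $i,j$. This gives an inclusion $\VR_r(G)_\chi\hookrightarrow G_\chi\cdot\E C$ whenever $B_r\subseteq C$ (for a simplex $(g_0,\dots,g_k)$, the element $g_0\in G_\chi$ is a witness). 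The reverse inclusion fails --- a vertex $gc$ of $G_\chi\cdot\E C$ may have $\chi(gc)<0$ --- so instead I would build a ``retraction''. Assuming $1\in C$, fix for each $C\in\C(G)$ a function $\rho_C\colon G_\chi\cdot C\to G_\chi$ with $\rho_C(x)^{-1}x\in C$ for all $x$; since a simplicial map into $\E G$ is the same as a map of sets from its vertices to $G$, the assignment $x\mapsto\rho_C(x)$ extends uniquely to a simplicial map $G_\chi\cdot\E C\to\E G$. For a simplex $(x_0,\dots,x_k)$ of $G_\chi\cdot\E C$ all vertices lie in a common coset $gC$ with $g\in G_\chi$, whence $\rho_C(x_i)^{-1}g\in CC^{-1}$ and so $\rho_C(x_i)^{-1}\rho_C(x_j)\in CC^{-1}CC^{-1}$; thus the map factors through $g_C\colon G_\chi\cdot\E C\to\VR_{\psi(C)}(G)_\chi$, where $\psi(C):=\min\{m\in\NN\mid CC^{-1}CC^{-1}\subseteq B_m\}$ is monotone in $C$, and I let $\phi\colon\NN\to\C(G)$ pick for each $r$ a compact set containing $B_r$.

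With the $f_r$ the inclusions above and the $g_C$ just defined, each round-trip composite differs from the relevant filtration inclusion only by sliding the first several vertices of a simplex from $x_i$ to $\rho_{(\cdot)}(x_i)$ (or back), which is precisely the ``standard'' simplicial homotopy of Section~\ref{sec:SSets}, all of it taking place inside the free simplicial set $\E G$. In every case the mixed tuples have all entries in $G_\chi$ (the original vertices because one is in a $\chi\ge 0$ part, the $\rho$-images by construction), and, using the coset-witness $g$ together with the relation $\rho_C(x_i)^{-1}g\in CC^{-1}$, their pairwise $d$-distances --- respectively, the fact that they lie in a single $G_\chi$-translate of a compact set --- are bounded in terms of the compact set or radius involved; hence the homotopies land in a later filtration stage. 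The same computation shows the $f_r$ and the $g_C$ eventually commute with the filtration inclusions up to simplicial homotopy and verifies the two commuting-square conditions of Definition~\ref{dfn:hoequiv}. So $(\phi,(f_r))$ and $(\psi,(g_C))$ are mutually inverse simplicial homotopy equivalences, and Lemma~\ref{lem:homotopyequiv}(2) yields that $(\VR_r(G)_\chi)_r$ is essentially $(k-1)$-acyclic over $R$ if and only if $(G_\chi\cdot\E C)_C$ is, for every $k\in\NN$.

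Finally, I would handle degree zero. Every $\VR_r(G)_\chi$ has vertex set exactly $G_\chi$, so for $r\le r'$ the map $\pi_0(\VR_r(G)_\chi)\to\pi_0(\VR_{r'}(G)_\chi)$ is surjective, hence so is the induced map on $\redH_0(\,\cdot\,;R)$; therefore the directed system $(\redH_0(\VR_r(G)_\chi;R))_r$ is essentially trivial if and only if some stage vanishes, i.e.\ if and only if $\chi\in\TopS^1(G;R)$ in the sense of Definition~\ref{def:original_homological}. Combining this with the previous paragraph: for $n\ge 2$, $\chi\in\TopS^n(G;R)$ iff $\chi\in\TopS^1(G;R)$ and $(\VR_r(G)_\chi)_r$ is essentially $(n-1)$-acyclic over $R$, which --- since essential $(n-1)$-acyclicity already entails essential $0$-acyclicity --- is equivalent to $(\VR_r(G)_\chi)_r$ being essentially $(n-1)$-acyclic over $R$, hence to $(G_\chi\cdot\E C)_C$ being essentially $(n-1)$-acyclic over $R$; the case $n=1$ is the $\redH_0$ remark itself, and $n=0$ is vacuous on both sides. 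The main obstacle is the bookkeeping in the third paragraph: one must pin down, for each of the standard homotopies, exactly which compact set or radius controls every mixed simplex, so as to genuinely land in a later filtration stage and to check all of Definition~\ref{dfn:hoequiv}; everything else is routine.
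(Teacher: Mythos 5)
Your proof is correct in substance and takes a more direct, self-contained route than the paper's. Both arguments have the same two-step shape: first reconcile the awkward $\TopS^1$ clause in Definition~\ref{def:original_homological} with uniform essential acyclicity (your argument via surjectivity of the maps on $\pi_0$ and hence on $\redH_0$ is equivalent to the paper's chain-level argument in Lemma~\ref{lem:simplified_dfn}), and then transport essential acyclicity from the Vietoris--Rips filtration of $\E G_\chi$ to the filtration $(G_\chi\cdot \E C)_{C\in\C(G)}$ of $\E G$. Where you differ is in the second step: the paper passes through an intermediate filtration $\bigl((G\cdot\E C)_\chi\bigr)_{C\in\C(G)}$ of $\E G_\chi$, using Lemma~\ref{lem:filter_EGchi} to compare it (by cofinality) with the Vietoris--Rips filtration, and then invoking \cite[Lemma~3.7]{BHQ24a} for a simplicial homotopy equivalence with $(G_\chi\cdot\E C)_C$. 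You instead construct the homotopy equivalence $(\VR_r(G)_\chi)_r \simeq (G_\chi\cdot\E C)_C$ in one shot: the forward maps are inclusions enabled by bi-invariance of the metric, and the backward maps retract each vertex $x$ of a simplex to a nearby element $\rho_C(x)\in G_\chi$. This in-lines the content of \cite[Lemma~3.7]{BHQ24a} (which uses essentially the same $\rho_C$-type retraction) together with the cofinality observation, at the price of re-deriving that material rather than citing it. Your approach is slightly more work but more self-contained; the paper's is shorter by outsourcing the harder homotopy equivalence to the earlier article. The one thing you should be sure to spell out when carrying out the ``bookkeeping'' you flag is that $\phi$ and $\psi$ are genuinely monotone poset maps (you can take $\phi$ to be any monotone choice of compact $\supseteq B_r$, and $\psi(C)=\min\{m \mid CC^{-1}CC^{-1}\subseteq B_m\}$ is automatically monotone), and that in every mixed tuple the witness for landing in $G_\chi\cdot\E C'$ can be taken to be the original coset witness $g\in G_\chi$, while for landing in $\VR_{r'}(G)_\chi$ one estimates cross-distances such as $d(g_i,\rho_C(g_j))$ via the triangle inequality through the pivot $g_j$, bounded by $r$ plus the diameter of $C^{-1}$.
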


Proposition~\ref{prop:appendix_main} justifies our working definition of $\TopS^n(G;R)$ in the body of the paper. Note that the scope of Definition~\ref{dfn:sigma} is extended to locally compact Hausdorff groups whose coarse structure is not metrizable. In those cases, the group $G$~under question is not compactly generated, and Definition~\ref{dfn:sigma} yields $\TopS^n(G;R)=\emptyset$ for all $n\in \NN$.  

Let us first simplify Definition~\ref{def:original_homological} by getting rid of the case distinction:

\begin{lem}[$\TopS^1(G;R)$ via essential acyclicity] \label{lem:simplified_dfn}
    Suppose $G$~is equipped with a metric inducing its coarse structure. Then, for every $n\in \NN$, a character $\chi \colon G \to \RR$ lies in $\chi \in \TopS^n(G;R)$ if and only if $(\VR_r(G)_\chi)_{r\in\NN}$ is essentially $(n-1)$-acyclic over~$R$.
\end{lem}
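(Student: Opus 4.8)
The plan is to eliminate the case distinction in Definition~\ref{def:original_homological} by showing that, for $n\ge 2$, the clause ``$\chi\in\TopS^1(G;R)$'' is automatically implied by essential $(n-1)$-acyclicity of the filtration $(\VR_r(G)_\chi)_{r\in\NN}$, so that the whole statement collapses to the case $n=1$. The case $n=0$ is vacuous on both sides (recall $\TopS^0(G;R)=\Homtop(G,\RR)$, while essential $(-1)$-acyclicity is a vacuous condition), and one may assume $R\neq 0$ throughout, since over the zero ring all homology modules vanish and both conditions hold trivially.

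The single structural input I would isolate first is that, for $r\le r'$, the simplicial sets $\VR_r(G)_\chi$ and $\VR_{r'}(G)_\chi$ have the same set of vertices --- namely $G_\chi$ --- so $\VR_{r'}(G)_\chi$ is obtained from $\VR_r(G)_\chi$ by adjoining only simplices of positive dimension. Consequently the inclusion induces a \emph{surjection} $\pi_0(\VR_r(G)_\chi)\onto\pi_0(\VR_{r'}(G)_\chi)$: as $r$ grows, connected components can merge but never split. In particular, once some $\VR_{r_0}(G)_\chi$ is connected, so is $\VR_r(G)_\chi$ for every $r\ge r_0$.

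Next I would record the elementary observation that for an inclusion $X\into Y$ of simplicial sets sharing a nonempty vertex set, with $R\neq 0$, the induced map $\redH_0(X;R)\to\redH_0(Y;R)$ is zero if and only if $Y$ is connected. Indeed, if $Y$ had two distinct components, one could choose vertices $v,w$ lying in distinct components of $Y$, hence --- since the map $\pi_0(X)\onto\pi_0(Y)$ only coarsens the partition --- in distinct components of $X$ as well; then $[v]-[w]$ is a nonzero class of $\redH_0(X;R)$ whose image $[v]_Y-[w]_Y$ is again nonzero, contradicting vanishing of the map. With this, the case $n=1$ follows: if $\redH_0(\VR_{r_0}(G)_\chi;R)=0$ for some $r_0$, then by the previous paragraph $\redH_0(\VR_r(G)_\chi;R)=0$ for all $r\ge r_0$, so $(\VR_r(G)_\chi)_r$ is essentially $0$-acyclic; conversely, applying essential $0$-acyclicity to the index $0$ yields $r'\ge 0$ for which $\redH_0(\VR_0(G)_\chi;R)\to\redH_0(\VR_{r'}(G)_\chi;R)$ is the zero map, whence by the observation $\VR_{r'}(G)_\chi$ is connected, i.e.\ $\redH_0(\VR_{r'}(G)_\chi;R)=0$, which is the defining condition for $\chi\in\TopS^1(G;R)$.

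Finally, for $n\ge 2$ I would unwind Definition~\ref{def:original_homological}: $\chi\in\TopS^n(G;R)$ means $\chi\in\TopS^1(G;R)$ together with essential $(n-1)$-acyclicity of $(\VR_r(G)_\chi)_r$ over $R$. Since $n-1\ge 1$, essential $(n-1)$-acyclicity includes essential triviality of the degree-$0$ directed system, which by the case $n=1$ is precisely the condition $\chi\in\TopS^1(G;R)$; so the first clause is redundant and $\chi\in\TopS^n(G;R)$ is equivalent to essential $(n-1)$-acyclicity of $(\VR_r(G)_\chi)_r$, as desired. I expect the main obstacle to be exactly this ``promotion'' step in the $n=1$ case --- passing from essential $0$-acyclicity of the system to vanishing of $\redH_0$ at a single stage --- since the analogous statement fails in positive degrees; what makes it work here is the common-vertex-set phenomenon (merging without splitting), packaged in the small lemma about $\redH_0$ of inclusions. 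The remaining steps are routine unwinding of definitions.
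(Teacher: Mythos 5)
Your proposal is correct and follows essentially the same strategy as the paper's proof: both reduce everything to showing that the directed system $(\redH_0(\VR_r(G)_\chi;R))_{r}$ is essentially trivial if and only if some stage vanishes, and both hinge on the observation that the $0$-skeleton $G_\chi$ is independent of $r$. The only cosmetic difference is that you phrase the promotion step via $\pi_0$ (components merge but never split) and a small lemma about $\redH_0$ of inclusions with a common vertex set, whereas the paper argues directly at the level of augmented chain complexes, using that $\Ch_0$ and the augmentation are fixed while $\Ch_1$ only grows; the two arguments are interchangeable, and your explicit treatment of the $n\ge 2$ reduction and the $R=0$ caveat merely spells out what the paper leaves implicit.
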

\begin{proof}
    We need to show that the directed system of $R$-modules $(\tilde \h_0(\VR_r(G)_\chi; R))_{r\in\NN}$ is essentially trivial if and only if some of its elements is~$0$.
    The key observation is that the $0$-skeleton of $\VR_r(G)_\chi$ is independent of~$r$ (explicitly, it always consists of $G_\chi$). Thus, for every $r,s\in\NN$ with $r\le s$, the lower dimensions of the inclusion-induced map of simplicial chain complexes have the form
   $$\begin{tikzcd}
        \Ch_1(\VR_r(G)_\chi;R)\ar[r]\ar[d, hook]& \Ch_0(\VR_r(G)_\chi;R)\ar[r]\ar[d, equals] & R\ar[d, equals]\\
        \Ch_1(\VR_s(G)_\chi;R)\ar[r]&  \Ch_0(\VR_s(G)_\chi;R)\ar[r] & R
    \end{tikzcd}.$$
    
    It is straightforward to see that
    if the upper row is exact, then so is the bottom row, which tells us that if for some~$r$ we have $\tilde \h_0(\VR_r(G)_\chi; R)=0$, then also the homology $R$-modules indexed by $s\ge r$ vanish. In particular, the system is essentially trivial.

    Conversely, if the system is essentially trivial, then in particular some pair of integers $r\le s$ induces a trivial map on homology. It follows that the bottom row of the corresponding diagram is exact, that is, $\tilde \h_0(\VR_s(G)_\chi; R)=0$.
\end{proof}

The following lemma has as a direct consequence that Definition~\ref{def:original_homological} is indeed independent of the choice of metric.

\begin{lem}[Filtrations of $\E G_\chi$] \label{lem:filter_EGchi}
    Suppose $G$~is equipped with a metric inducing its coarse structure. Then the following filtrations of $\E G_\chi$ are cofinal:
    \[(\VR_r(G)_\chi)_{r\in \NN} \quad \text{and} \quad ((G\cdot \E C)_\chi)_{C\in\C(G)}.\]
\end{lem}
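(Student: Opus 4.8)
The plan is to reduce the statement to a comparison of the two ambient filtrations of $\E G$ and then translate between the metric and the coarse structure of $G$. First I would recall the standard description of controlled sets: a subset $E\subseteq G\times G$ is controlled for the coarse structure of $G$ if and only if $E\subseteq\Delta_C$ for some $C\in\C(G)$. This holds because the sets of the latter form are already closed under the coarse-structure axioms — one has $\Delta_C\cup\Delta_{C'}=\Delta_{C\cup C'}$, $\Delta_C^{-1}=\Delta_{C^{-1}}$, $\Delta_{C}\circ\Delta_{C'}\subseteq\Delta_{C'C}$, and $\Delta_{\{1\}}$ is the diagonal — so they already constitute the coarse structure generated by the $\Delta_C$. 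Since the fixed metric $d$ induces this coarse structure, $E$ is controlled precisely when $\sup\{d(x,y)\mid(x,y)\in E\}<\infty$. I would also observe that for simplicial subsets $X\subseteq X'$ of $\E G$ one has $X_\chi=X\cap\E G_\chi\subseteq X'\cap\E G_\chi=X'_\chi$; hence it suffices to produce poset maps exhibiting $(\VR_r(G))_{r\in\NN}$ and $(G\cdot\E C)_{C\in\C(G)}$ as cofinal filtrations of $\E G$, since intersecting the resulting inclusions with $\E G_\chi$ yields the cofinality of the $\chi$-restricted filtrations (and, a fortiori, that the $\chi$-restricted families cover $\E G_\chi$).

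For the map $\psi\colon\C(G)\to\NN$: given $C\in\C(G)$, the entourage $\Delta_{C^{-1}C}$ is controlled, so I set $\psi(C):=\lceil\sup\{d(x,y)\mid(x,y)\in\Delta_{C^{-1}C}\}\rceil$, a natural number, monotone in $C$ because $C\subseteq C'$ gives $\Delta_{C^{-1}C}\subseteq\Delta_{C'^{-1}C'}$. A simplex of $G\cdot\E C$ has the form $(gc_0,\dots,gc_k)$ with $g\in G$ and $c_i\in C$; since $(gc_i)^{-1}(gc_j)=c_i^{-1}c_j\in C^{-1}C$, every ordered pair of its vertices lies in $\Delta_{C^{-1}C}$, so $d(gc_i,gc_j)\le\psi(C)$ and the simplex lies in $\VR_{\psi(C)}(G)$. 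Thus $G\cdot\E C\subseteq\VR_{\psi(C)}(G)$.

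For the map $\phi\colon\NN\to\C(G)$: given $r\in\NN$, the $r$-ball relation $E_r:=\{(x,y)\mid d(x,y)\le r\}$ is controlled, so there is $D_r\in\C(G)$ with $E_r\subseteq\Delta_{D_r}$; as $E_r$ contains the diagonal we have $1\in D_r$. Putting $\phi(r):=\bigcup_{s=0}^{r}D_s$ makes $\phi$ monotone while keeping $E_r\subseteq\Delta_{\phi(r)}$. Given a simplex $(g_0,\dots,g_k)$ of $\VR_r(G)$, each pair $(g_0,g_i)$ lies in $E_r$, hence $g_0^{-1}g_i\in D_r\subseteq\phi(r)$ (and $g_0^{-1}g_0=1\in\phi(r)$); therefore $(g_0,\dots,g_k)=g_0\cdot(g_0^{-1}g_0,\dots,g_0^{-1}g_k)$ lies in $g_0\cdot\E\phi(r)\subseteq G\cdot\E\phi(r)$, so $\VR_r(G)\subseteq G\cdot\E\phi(r)$. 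Combining the two directions and restricting to $\E G_\chi$ gives the assertion.

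I expect no genuine obstacle; the computations are routine. The two points that need a little care are stating the controlled-set characterization cleanly (so it can be cited from Leitner--Vigolo or checked in a line), and, in the $\phi$-direction, ensuring that the compact set $D_r$ can be taken to depend only on $r$ — which works precisely because $E_r$ is a single controlled set that uniformly bounds all pairs of vertices of every simplex of $\VR_r(G)$ — followed by the trivial monotonization $r\mapsto\bigcup_{s\le r}D_s$.
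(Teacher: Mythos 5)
Your proof is correct, and the core strategy matches the paper's: reduce to the case $\chi=0$ (i.e.\ compare the unrestricted filtrations of $\E G$, observing that intersecting with $\E G_\chi$ preserves inclusions), then translate between the coarse structure and the metric. Where you differ is in presentation rather than substance: the paper routes through a third, intermediate filtration $(\VR_C(G))_{C\in\C(G)}$ and delegates the comparison with $(G\cdot\E C)_{C\in\C(G)}$ to a citation of an earlier lemma, declaring the remaining comparison with $(\VR_r(G))_{r\in\NN}$ to be ``a rephrasing'' of the metric inducing the coarse structure. You bypass the intermediate filtration entirely and instead construct the monotone maps $\psi\colon\C(G)\to\NN$ and $\phi\colon\NN\to\C(G)$ and verify the inclusions $G\cdot\E C\subseteq\VR_{\psi(C)}(G)$ and $\VR_r(G)\subseteq G\cdot\E\phi(r)$ by hand, using the clean controlled-set characterization $E$ controlled $\iff E\subseteq\Delta_C$ for some $C\in\C(G)$. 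This buys you a self-contained argument with no external dependencies, at the cost of re-deriving what is implicitly in \cite[Lemma~4.1]{BHQ24b}; the algebraic checks (including the translation-by-$g_0^{-1}$ trick in the $\phi$-direction and the $C^{-1}C$ normalization in the $\psi$-direction) are exactly the content the paper's argument would unwind to.
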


\begin{proof}
    We will show the case $\chi=0$, that is, that the filtrations
    \[(\VR_r(G))_{r\in \NN} \quad \text{and} \quad (G\cdot \E C)_{C\in\C(G)}\]
    of~$\E G$ are cofinal. Clearly the stated lemma will then follow as a consequence.

    We will prove that both of these filtrations are cofinal to a third one. For each $C\in\C(G)$, define the simplicial subset $\VR_C(G)\subseteq\E C$, whose $k$-simplices are the tuples of the form $(g_0, \ldots, g_k)$ such that for all $i,j\in \{0, \ldots, k\}$ we have $g_i^{-1}g_j \in C$. The new filtration of~$\E G$ we shall consider is $(\VR_C(G))_{C\in \C(G)}$.

    The fact that $(G\cdot \E C)_{C\in\C(G)}$ and $(\VR_C(G))_{C\in \C(G)}$ are cofinal is the first item of a lemma in the article on homological $\Sigma$-sets \cite[Lemma~4.1]{BHQ24b}.

    As for the comparison between $(\VR_C(G))_{C\in \C(G)}$ and $(\VR_r(G))_{r\in \NN}$, the fact that they are cofinal is precisely a rephrasing of the fact that the metric defining the Vietoris-Ripps filtration is the one induced by the coarse structure of~$G$.
\end{proof}

\begin{proof}[Proof of Proposition~\ref{prop:appendix_main}]
    Lemmas \ref{lem:simplified_dfn}~and~\ref{lem:filter_EGchi} combined tell us that $\chi \in \TopS^n(G;R)$ if and only if the filtration $((G\cdot \E C)_\chi)_{C\in\C(G)}$ of~$\E G_\chi$ is essentially $(n-1)$-acyclic over~$R$. In the article on homotopical $\Sigma$-sets, it is shown that the filtrations $((G\cdot \E C)_\chi)_{C\in\C(G)}$ and $(G_\chi \cdot \E C)_{C\in\C(G)}$ are simplicially homotopy-equivalent \cite[Lemma~3.7]{BHQ24a}. Hence, by Lemma~\ref{lem:homotopyequiv}, they have the same essential acyclicity properties.
\end{proof}

\printbibliography

\end{document}